\newcommand{\msc}[2][2000]{%
  \let\@oldtitle\@title%
  \gdef\@title{\@oldtitle\footnotetext{#1 \emph{Mathematics subject
        classification.} #2}}%
}
\theoremstyle{plain}
\newtheorem{theorem}{Theorem}[section]
\newtheorem{definition}[theorem]{Definition}
\newtheorem{lemma}[theorem]{Lemma}
\newtheorem{corollary}[theorem]{Corollary}
\newtheorem{proposition}[theorem]{Proposition}
\theoremstyle{remark}
\newtheorem{remark}[theorem]{Remark}
\def\C{{\mathbb C}}
\def\R{{\mathbb R}}
\def\N{{\mathbb N}}
\def\Z{{\mathbb Z}}
\def\T{{\mathbb T}}
\def\Sch{{\mathcal S}}
\def\cA{\mathcal{A}}
\def\cC{\mathcal{C}}
\def\cF{\mathcal{F}}
\def\cL{\mathcal{L}}
\def\({\left(}
\def\){\right)}
\def\<{\left\langle}
\def\>{\right\rangle}
\def\Tend#1#2{\mathop{\longrightarrow}\limits_{#1\rightarrow#2}}
\def\eps{\varepsilon}
\def\op{{\rm op}}
\def\exp{\mathrm{e}^}
\DeclareMathOperator{\Hess}{Hess}
\DeclareMathOperator{\Tr}{Tr}
\DeclareMathOperator{\Sp}{Sp}
\DeclareMathOperator{\Id}{Id}
\DeclareMathOperator{\supp}{supp}
\DeclareMathOperator{\rk}{rk}
\newcommand\restr[2]{{
  \left.\kern-\nulldelimiterspace 
  #1 
  \vphantom{\big|} 
  \right|_{#2} 
  }}
\numberwithin{equation}{section}
\title[]{Effective mass theorems with Bloch modes crossings}
\author[V. Chabu]{Victor Chabu}
\address{Institute of Physics, University of São Paulo, DFMA, CP 66.318 05314-970, São Paulo, SP, Brazil}
\email{vbchabu@if.usp.br}
\author[C. Fermanian]{Clotilde~Fermanian Kammerer}
\address{LAMA, UMR CNRS 8050,
Universit\'e Paris EST\\
61, avenue du G\'en\'eral de Gaulle\\
94010 Cr\'eteil Cedex\\ France}
\email{Clotilde.Fermanian@u-pec.fr}
\author[F. Maci\`{a}]{Fabricio Maci\`{a}}
\address{M$^2$ASAI, Universidad Polit\'{e}cnica de Madrid. ETSI Navales. Avda. de la Memoria 4, 28040 Madrid, Spain}
\email{fabricio.macia@upm.es}
\begin{document}

\begin{abstract} We study a Schr\"odinger equation modeling the dynamics of an electron in a crystal in the asymptotic regime of small wave-length comparable to the characteristic scale of the crystal. Using Floquet Bloch decomposition, we obtain a description of the limit of time averaged energy densities. We make rather  general assumption assuming that the initial data are uniformly bounded in a high order Sobolev spaces and 
 that the crossings between Bloch modes are at worst conical. We show that despite the singularity they create, conical crossing do not trap the energy  and do not prevent dispersion. We also investigate the interactions between modes that can occurred when there are some degenerate crossings between Bloch bands.

\end{abstract}

\maketitle

\tableofcontents

\section{Introduction} 

\subsection{Description of the problem}
We consider the dynamics of an electron in a crystal in the regime of small wave-length comparable to the characteristic scale of the crystal. After a suitable rescaling (see for instance~\cite{PR96}), such an analysis leads to an $\eps$-dependent Schr\"odinger equation  where $\eps$ is a small parameter $\eps\ll 1$
\begin{equation}\label{eq:schro}
\left\{
\begin{array}{l}
i\partial_t \psi^\eps (t,x)+\dfrac{1}{2} \Delta_x \psi^\eps(t,x) - \dfrac{1}{\eps^{2}} V_{\rm per}\left(\dfrac{x}{\eps}\right)\psi^\eps(t,x) - V_{\rm ext}(t,x)\psi^\eps(t,x) =0,\vspace{0.2cm} \\
\psi^\eps |_{t=0}=\psi^\eps_0.
\end{array}\right.
\end{equation}
The potential~$V_{\rm per}$ is supposed to be smooth, real-valued and  $\Z^d$-periodic; it models the interactions due to the crystalline structure. The external potential $V_{\rm ext}$, takes into accounts the impurities; we assume that $t\mapsto V_{\rm ext}(t,\cdot)$ is a bounded map from~$\R$ into the set of smooth, real-valued functions on $\R^d _x$ with bounded derivatives.
The times-scales of the equation~\ref{eq:schro} are characteristic of the analysis of the obstructions to the dispersion of the energy. It is the long time scaling studied in~\cite{BLP78, PR96, AP05, AP06, Spar06, HW11, BBA11}, by contrast to the short time analysis  that allows to analyze transport effects and is performed  for example in~\cite{BMP01,HST:01,PST:03,CS12,Watson} (the Schr\"odinger equation therein is obtained from~\eqref{eq:schro} by changing $t$ into $\tau=\eps t$).

\subsubsection{The wave function, observables and quadratic quantities}
We are interested in the asymptotic behavior of the time-averaged position densities $|\psi^\eps(t,x)|^2$ as~$\eps$ goes to~$0$. In other words, we would like to characterize the limit $\eps\to 0^+$ of the quantities
\begin{equation}\label{eq:energydensity}
\int_a^b\int_{\R^d} \phi(x)|\psi^\eps(t,x)|^2 dx\,dt,\;\; \phi\in{\mathcal C}_0(\R^d),\;\; a<b,
\end{equation}
where $\mathcal{C}_0(\R^d)$ stands for the space of continuous compactly supported functions on $\R^d$.

We will derive representations of these limits in terms of {\it Effective mass equations}. In its full generality, our result also describes the evolution of the action of observables on the wave functions. Indeed, the wave function itself has no physical meaning and it is the evolution of quadratic quantities such as those of Section~\ref{sec:semiclas} that carries information like the average momenta or the average energy. In~\eqref{eq:energydensity}, the averaging in time takes into account the fact that a physical observation is not instantaneous and, though small, its duration is not negligible. However, we will discuss situations where local in time point-wise information can be derived about the evolution of the energy densities (see Section~\ref{subsec:single}).
\smallskip

\subsubsection{Floquet-Bloch theory}
It is classical in this context to use Floquet-Bloch theory in order to diagonalize $-\frac{1}{2}\Delta_x+V_{\rm per}$. To this aim, one  introduces, for $\xi\in\R^d$, the operator
$$P(\xi):= \frac{1}{2} |\xi+D_y|^2 + V_{\rm per}(y),\;\; y\in\T^d,$$
where $\T^d=\R^d\backslash\Z^d$ is a flat torus. It is well known that this operator is essentially self-adjoint on $L^2(\T^d)$ with domain $H^2(\T^d)$, and  
has a compact resolvent, hence a non-decreasing sequence of eigenvalues counted with their multiplicities, which are called \textit{Bloch energies} or \textit{band functions}
$$ \varrho_1(\xi)\leq \varrho_2(\xi)\leq \cdots\leq \varrho_n(\xi)\longrightarrow +\infty,$$
and an orthonormal basis of eigenfunctions $\left(\varphi_n(\cdot,\xi)\right)_{n\in\N^*}$ called \textit{Bloch waves} or \textit{Bloch modes}, satisfying for all~$\xi\in\R^d$ and $n\in\N^*$:
\begin{equation}\label{eq:underline}
P(\xi)\varphi_n(\cdot,\xi)=\varrho_n(\xi)\varphi_n(\cdot,\xi).
\end{equation}
Both Bloch waves and Bloch energies are continuous functions of the $\xi$-variable. Besides,
for all $k\in2\pi\Z^d$, the operator $P(\xi+k)$ is unitarily equivalent to $P(\xi)$ through  multiplication by  $y\mapsto {\rm e}^{ik\cdot y}$, which implies that for all~$n\in \N^*$,  the maps $\xi\mapsto \varrho_n(\xi) $ are $2\pi\Z^d$-periodic and the map $\xi \mapsto \varphi_n (\cdot,\xi)$ belongs to $\mathcal C(\R^d_\xi, L^2(\T^d_y))$.
The spectrum of $-\frac{1}{2}\Delta_x+V_{\rm per}$ is then the union of the  \textit{Bloch bands} $B_n:=\varrho_n([0,2\pi]^d)$, which are closed intervals:
\[
\Sp\left(-\frac{1}{2}\Delta_x+V_{\rm per}\right)=\bigcup_{n\in\N^*}B_n.
\]
The Bloch energies $\xi\mapsto \varrho_n(\xi)$ are Lipschitz functions which are analytic outside a  set of zero Lebesgue measure (see~\cite{Wilcox78}). In particular Bloch energies that are of constant multiplicity as $\xi$ varies are always analytic functions of $\xi$. These energies are then called \textit{isolated}. The opposite situation, that is, when two, otherwise distinct, Bloch energies coincide at some point $\xi$ is referred to as a \textit{crossing}. At those points, the multiplicity is greater than one and the corresponding Bloch bands have non-empty intersection. When the space dimension is one, two Bloch bands can touch at one edge and their crossing set consists on isolated points (see Appendix~\ref{sec:1d} and the references therein); in higher dimensions more complicated situations can occur: most bands overlap (in fact as soon as $d\geq 2$ only a finite number of gaps exist) and the crossing set may be a higher dimensional manifold (in fact, the union of the graphs of the band functions form a real analytic variety). The survey article \cite{Kuch16} provides additional details on these issues.

\subsubsection{Effective mass theory} Sometimes also called \textit{effective Hamiltonian theory}, effective mass theory consists in showing that, under suitable assumptions on the initial data~$\psi^\eps_0$, the  energy density associated with the solutions of~\eqref{eq:schro} can be approximated for~$\eps$ small by those of a simpler Schrödinger equation, the \textit{Effective mass equation}, which does not depend on $\eps$ and involves quantities related to the Bloch energies. 

Effective mass equations have then been derived in various contributions~\cite{BLP78, PR96, AP05, AP06, Spar06, HW11, BBA11} under the assumptions that the orthogonal projection of the initial datum $\psi^\eps_0$ on spectral subspaces corresponding to the non-simple Bloch energies is negligible, and that critical points of this band functions are non-degenerate.

All these contributions emphasize the important role played by the set of critical points of the Bloch energies. Indeed, the group velocity of the $n$-th mode is $\eps^{-1} \nabla \varrho_n(\eps\xi)$, which becomes infinite in the limit~$\eps \rightarrow 0$; this implies that the obstructions to the dispersive effects created by the $n$-th band, $n\in\N^*$,  have to be found above the set $\Lambda_n$ of critical points  of the function $\varrho_n$:
\begin{equation}\label{def:Lambdan}
\Lambda_n:=\{\xi\in\R^d\,:\,\nabla\varrho_n(\xi)=0\}.
\end{equation}

A second  feature that is assumed in the aforementioned references is the simplicity of the band functions, which is an important technical ingredient in the proofs. Simple band functions are smooth, and therefore group velocity is well defined everywhere. This property may fail in the presence of bands crossings which  creates at worst  loss of regularity at the crossing points. 
In that case, the group velocity $\eps^{-1}\nabla_\xi \varrho_n (\eps\xi)$ is no longer defined at the crossing points, even though it may have directional limits, the archetype being the conical singularity $\varrho_n (\xi)\sim \xi/|\xi| $ close to $\xi=0$. Our aim here is to deal with this difficulty. As a consequence, this motivates the introduction of 
the crossing set of two distinct Bloch energies:
\begin{equation}\label{def:Sigmann'}
\Sigma_{n,n'}:=\{\xi\in\R^d\,:\,\varrho_n(\xi)=\varrho_{n'}(\xi)\},\quad  n,n'\in\N^*, \;\varrho_n\neq \varrho_{n'}.
\end{equation} 
The band functions $\varrho_n$, $n\in\N^*$, are piece-wise real analytic; their non-smoothness points lie in the union of crossing sets $\bigcup_{\varrho_n\neq \varrho_{n'}}\Sigma_{n,n'}$.
We will also consider the sets 
\begin{equation}\label{def:Sigman}
\Sigma_n:=\Sigma_{n,n+1},\quad n\in\N^*. 
\end{equation}
The crossing problematic has been addressed since long for equations that are scaled differently in the small parameter, in particular by George Hagedorn in the 90s~\cite{Hag94}. Since then, different approaches have been devoted to understand propagation through crossings, from the use of normal forms~\cite{CdV1,CdV2}, the analysis in terms of Wigner measures~\cite{FG02,FG03,F1} and Wigner functions~\cite{FL08,FL17,FM}, up to, more recently, the analysis in terms of wave packets~\cite{Watson} in the case of non-singular crossings. Indeed, the growing interest in crossings, especially conical ones, is linked with the technological interest of new materials that are topological insulators (see~\cite{Drouot,DrouotWeinstein} and references therein).
However, this question has never been addressed in the context of the particular scaling in~$\eps$ of equation~\eqref{eq:schro}.

\smallskip

In~\cite{CFM1,CFM2}, the range of validity of the Effective Mass Theory has been extended to include degenerate critical points,  through the introduction of a new class of Effective mass equation which are of von Neumann type. However, the Bloch modes involved in the description of the initial data are still assumed to be of constant multiplicity.  
Our aim here is to consider situations where 
different Bloch energies may have non-empty intersections inducing singularities and to treat rather general initial data.
Our result gives a 
 a complete description of the weak limits of the densities $|\psi^\eps(t,x)|^2 $ as~$\eps$ goes to~$0$  when the crossings are conical, in a sense that we shall make precise later.  This is done through the analysis of the  weak limits of the Wigner function associated with $\psi^\eps(t,x)$. Indeed, the Wigner function introduced in Section~\ref{sec:basic} below, plays the role of a generalized energy-density in the phase space~$T^*\R^d= \R^d_x\times \R^d_\xi$, the density  $|\psi^\eps(t,x)|^2 $ being its projection in the configuration space~$\R^d_x$. Our result covers all possible cases when $d=1$ and generic situations in higher dimensions.  
We also complete the description of the picture by providing a characterisation of these limits when crossings are degenerate, exhibiting the persistence of terms due to interactions between the Bloch energies that cross. 
 Our results rely on the use of a two-microlocal analysis in the spirit of~\cite{MaciaTorus,AM14,AFM15,ALM16,MacRiv18}, using two-scale Wigner distributions~\cite{Fermanian_note1,Fermanian_Note2,NierScat,MillerThesis}.

\subsection{General assumption on the initial data}
We denote by $A(\eps D_x)$, for $\eps>0$, the scaled Fourier multiplier associated with the function $A(\xi)$, \textit{i.e.} the operator satisfying
$$\forall f\in{\mathcal S}(\R^d),\quad\widehat{A(\eps D_x) f}(\xi)= A(\eps \xi) \widehat f(\xi),$$
where the following normalization has been used for the Fourier transform:
\[
\widehat f(\xi)=\int_{\R^d} \exp{-i\xi\cdot x}f(x)dx.
\]
Along the paper, we will consider the functions spaces $H^s_\eps(\R^d)$, defined for $s\geq 0$, that are  the Sobolev spaces equipped with the norms:
\[
\|f\|_{H^s_\eps(\R^d)}:=\|\left\langle \eps D_x\right\rangle^s f\|_{L^2(\R^d)},
\]
where $\left\langle \xi\right\rangle:=(1+|\xi|^2)^{1/2}$. 
\smallskip

Any function $U\in L^2(\R^d_x\times\T^d_y)$ can be written in terms of Fourier series as:
\[
U(x,y)=\sum_{k\in\Z^d}U_k(x){\rm e}^{i2\pi k\cdot y}\;\;{\rm 
with}\;\;
\|U\|_{L^2(\R^d\times\T^d)}^2=\sum_{k\in\Z^d}\|U_k\|_{L^2(\R^d)}^2.
\]
We denote by $H^s_\eps(\R^d\times\T^d)$, for $s\geq 0$, the Sobolev space consisting of those functions $U \in L^2(\R^d\times\T^d)$ such that there exists $\eps_0,C>0$ for which we have 
\begin{equation}\label{def:normHs}
\forall \eps\in(0,\eps_0),\;\;\|U\|_{H^s_\eps(\R^d\times\T^d)}^2:=\sum_{k\in\Z^d}\int_{\R^d}(1+|\eps\xi|^2+|k|^2)^s|\widehat{U_k}(\xi)|^2d\xi\leq C.
\end{equation}
These functions can be projected on the bands as follows. For every $n\in\N^*$ and $\xi\in\R^d$, we denote by~$\Pi_n(\xi)$ the projector from $L^2(\T^d)$ onto the eigenspace corresponding to $\varrho_n(\xi)$. The corresponding Fourier multiplier $\Pi_n(\eps D_x)$ acts on $L^2(\R_x^d\times \T_y^d)$, since band functions are bounded.
Finally, we
define the operator $L^\eps$ acting on functions $F\in H^s_\eps(\R^d\times\T^d)$, $s>d/2$, by 
$$(L^\eps F)(x):=F\left(x,\frac x\eps\right).$$ Then there exists $C_s>0$ such that, for every $F\in H^s_\eps(\R^d\times\T^d)$,
\begin{equation}\label{def:Leps}
\|L^\eps F\|_{L^2(\R^d)}\leq C_s \|F\|_{H^s_\eps(\R^d\times\T^d)},
\end{equation}
uniformly in $\eps>0$. See \cite[Lemma~6.2]{CFM2}.
\smallskip 

We will make the following assumption on the family of initial data in \eqref{eq:schro}.
\begin{itemize}
\item[\textbf{H0}] There exists a bounded family $(U^\eps_0)$ in $H^s_\eps(\R^d\times\T^d)$ for some $s>d/2$ such that:
\[
\psi^\eps_0=L^\eps U^\eps_0.
\]
\end{itemize}
Note that if  $(\psi^\eps_0)_{\eps>0}$ is  bounded in $H^s_\eps(\R^d)$ with $s>d/2$, {\bf H0} holds with~$U^\eps_0(y,x)= \psi^\eps_0(x)\otimes {\bf 1}_{\T^d}(y)$.

\subsection{The case of the dimension one}

Let us first state our results in  dimension $d=1$.
When $d=1$, 
one can prove that~$\Lambda_n$ is contained in $\pi\Z$, and consists only on non-degenerate critical points. In addition, when $|n-n'|>1$, $\Sigma _{n,n'}= \emptyset$ and $\Sigma_n\cap \Lambda_n=\emptyset$
  (see  Lemma~\ref{lem:critb}).  
  In this specific case, 
we are able to
 give a complete description of the limit of the energy density of families of solutions to~(\ref{eq:schro}) with initial data of the form stated in {\bf H0}.

\begin{theorem}\label{theorem:1d}
Assume $(\psi^\eps_0 )$ satisfies {\bf H0}. Then there exists  a subsequence $(\psi^{\eps_\ell}_0)_{\ell\in\N}$ of the initial data,
such that for every $a<b$ and every $\phi\in \mathcal{C}_0(\R)$ the following holds:
$$\displaylines{\qquad
\lim_{\ell\to\infty}\int_a^b\int_{\R}\phi(x)|\psi^{\eps_\ell}(t,x)|^2dxdt=
\sum_{n\in\N^*}\;
\sum_{\xi\in \Lambda_n}\int_a^b\int_{\R}\phi(x)|\psi_{\xi}^{(n)} (t,x)|^2dxdt
}$$
where, for every $n\in\N^*$ and $\xi\in \Lambda_n$,  $\psi_\xi ^{(n)}$ solves the effective mass Schrödinger equation:
\begin{equation}\label{eq:schrohprofil}
i\partial_t \psi_{\xi}^{(n)}(t,x) ={1\over 2}\partial^2_\xi\varrho_n(\xi)\partial_x^2 \psi_{\xi}^{(n)}(t,x) +V_{\rm ext}(t,x)\psi_{\xi}^{(n)}(t,x),
\end{equation}
with initial datum:
\begin{center}
$\psi_{\xi}^{(n)}|_{t=0}$ is the weak limit in $L^2(\R)$ of the sequence $\left({\rm e}^{-\frac{i}{\eps_\ell} \xi  x} L^{\eps_\ell} \Pi_n(\eps_\ell D_x)U^{\eps_\ell}_0\right)$. 
\end{center}
\end{theorem}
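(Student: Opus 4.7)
The strategy combines a Floquet--Bloch band decomposition of $\psi^\eps$ with a two-microlocal Wigner analysis localised at the critical points of each Bloch energy.

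First I would write $\psi^\eps=\sum_{n}\psi_n^\eps$ with $\psi_n^\eps:=\Pi_n(\eps D_x)\psi^\eps$. The $H^s_\eps(\R)$ bound with $s>1/2$ yields, via a standard estimate on $\sum_n\|\Pi_n(\eps D_x)\psi^\eps\|^2$ weighted by the growth of $\varrho_n$, that the sum truncates at some large $N$ up to an error $o(1)$ in $L^\infty(\R_t,L^2(\R_x))$, uniform in $\eps$. In the Floquet--Bloch picture, each $\psi_n^\eps$ satisfies, away from the crossing set $\Sigma_n$,
\begin{equation*}
i\partial_t\psi_n^\eps=\frac{1}{\eps^2}\varrho_n(\eps D_x)\psi_n^\eps+V_{\rm ext}(t,x)\psi_n^\eps+R_n^\eps,
\end{equation*}
where $R_n^\eps$ consists of inter-band coupling terms microlocalised near $\Sigma_n$. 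A semiclassical Wigner measure $\mu_n(t,dx,d\xi)$ on $\R\times\R$ can thus be attached to $\psi_n^\eps$, and the $n$-th contribution to the weak limit of \eqref{eq:energydensity} is $\int_a^b\int_{\R\times\R}\phi(x)\,d\mu_n(t,x,\xi)\,dt$.

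Second, I would prove that the time-integrated measure $\int_a^b\mu_n(t,\cdot)\,dt$ is supported on $\R_x\times\Lambda_n$. Formally, the propagation law for $\mu_n$ is that of the Hamiltonian flow generated by $\eps^{-2}\varrho_n$; averaging over a macroscopic interval $[a,b]$ and passing to the weak limit forces concentration on the critical set $\{\partial_\xi\varrho_n=0\}$, by a mean-ergodic / non-stationary-phase argument. At points of $\Sigma_n\subset\pi\Z\setminus\Lambda_n$, the group velocity $\partial_\xi\varrho_n(\xi^\pm)$ is nonzero by Lemma~\ref{lem:critb}, so a cut-off argument together with the transport at speed $\eps^{-2}|\partial_\xi\varrho_n|$ shows that any shrinking neighbourhood of $\Sigma_n$ carries vanishing contribution in the limit. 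This is the main obstacle, since the Bloch projectors $\Pi_n$ lose smoothness at $\Sigma_n$; handling them requires a careful two-scale cut-off adapted to the one-dimensional edge-crossing geometry and an explicit control of the inter-band coupling $R_n^\eps$ on the smooth region complementary to $\Sigma_n$.

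Third, at each $\xi\in\Lambda_n$ I would introduce a two-scale Wigner distribution resolving concentrations at the rate $\eps$ in the momentum variable, in the spirit of \cite{Fermanian_note1,Fermanian_Note2,MillerThesis}. Under the rescaling $\zeta=\xi+\eps\eta$ and after removing the scalar phase $\exp{-it\varrho_n(\xi)/\eps^2}$, the Taylor expansion
\begin{equation*}
\frac{1}{\eps^2}\varrho_n(\xi+\eps\eta)=\frac{\varrho_n(\xi)}{\eps^2}+\tfrac{1}{2}\partial_\xi^2\varrho_n(\xi)\,\eta^2+O(\eps),
\end{equation*}
whose linear term vanishes by criticality and whose Hessian is nonzero by non-degeneracy, produces at leading order the effective Schrödinger equation~\eqref{eq:schrohprofil} satisfied by the weak $L^2$-limit $\psi_\xi^{(n)}(t,x)$. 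The initial datum is identified by weak continuity of $\Pi_n(\eps D_x)$ in $L^2(\R)$ together with the fact that the modulation $\exp{-i\xi x/\eps}$ compensates the momentum localisation around $\xi$. Finally, orthogonality in $L^2(\R_x)$ of the modulated profiles $\exp{i\xi x/\eps}\psi_\xi^{(n)}$ for distinct pairs $(n,\xi)$ (which follows from the $\eps$-separation of their Fourier supports, since the different points of $\Lambda_n\subset\pi\Z$ are at mutual distance $\pi$) yields the clean sum decomposition announced in the statement.
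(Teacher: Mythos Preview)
Your plan is correct and matches the paper's approach: Theorem~\ref{theorem:1d} is derived there as a corollary of the general Theorem~\ref{theorem0}, whose proof consists precisely of your three steps (band decomposition with $H^s_\eps$-truncation via Lemma~\ref{lem:proj}, localisation of the time-averaged Wigner measures on $\Lambda_n$ with vanishing at the conical crossings via Proposition~\ref{prop:easysupp} and Theorem~\ref{theo:2mic1}, and two-microlocal identification of the effective Heisenberg/Schr\"odinger dynamics at each critical point via Theorem~\ref{theorem43}). Your identification of the crossing singularity as ``the main obstacle'' is accurate---the paper handles it through the two-microlocal analysis of Section~\ref{sec:proof1} combined with the low-regularity pseudodifferential calculus of Appendix~\ref{app:pseudo}, which is exactly the ``careful two-scale cut-off'' you allude to.
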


Note some of  the accumulation points of ${\rm e}^{-\frac{i}{\eps_\ell} \xi  x} L^{\eps_\ell} \Pi_n(\eps_\ell D_x)U^{\eps_\ell}_0$  may just be~$0$. For example if one has~$V_{\rm per}=0$, only the first Bloch energy $\varrho_1$ has critical points and they are precisely $\Lambda_1=2\pi\Z$. Besides, the associated  projector $\Pi_1(\xi)$ coincides with the orthogonal projection onto $\C {\rm e}^{i ky}$ whenever $\xi\in (k-\pi,k+\pi)$ and $k\in 2\pi\Z$.  Therefore, if one takes $U^\eps_0(x,y)=\psi^{\eps}_0(x)\otimes {\bf 1}_{y\in\T^d}$, then 
\[
\Pi_1(\eps\xi)\widehat{U^{\eps_\ell}_0}(\xi,\cdot)=\mathbf{1}_{(-\pi,\pi)}(\eps\xi)\widehat{\psi^{\eps_\ell}_0}(\xi)
\]
and ${\rm e}^{-\frac{i}{\eps_\ell} 2\pi k  x} \Pi_1(\eps D_x)(\psi^{\eps_\ell}_0\otimes {\bf 1}_{y\in\T})$ weakly converges to zero when $k\neq0$. 
\smallskip

 Theorem~\ref{theorem:1d} is derived as a consequence of a more general analysis that is valid in any dimension under assumptions that are   satisfied for all Bloch energies when $d=1$, and that is presented in the next section.

\subsection{The generic case with $d\geq 1$ - conical crossings} 
We now present results that, under a set of assumptions that always hold when $d=1$, give a description of effective mass equations in higher dimension  under the presence of generic crossings for data satisfying~{\bf H0}.

\subsubsection{Assumptions}
Our first assumption concern the multiplicity of Bloch bands.
\begin{itemize}
\item[\textbf{H1}] For $n\in\N^*$, the multiplicity of the Bloch energy $\varrho_n$ is one,
except at crossing points, where it is two. This implies that a global labeling of the band functions exists such that $\Sigma_{n,n'}\neq\emptyset$ implies $|n-n'|=1$. 
\end{itemize}

\begin{remark}
 Hypothesis~\textbf{H1} is thought to be generic, as follows from the variational characterization of eigenvalues of Schrödinger operators with Bloch periodicity conditions. We make it in order to avoid  having statements that are unnecessarily involved.  As we stated it, it prevents from having simultaneous crossings of more than two Bloch energies, and higher multiplicities (both scenarios are  non-generic). The proofs we provide can be adapted in order to deal with these situations.
\end{remark}

We also consider a   generic assumption on the set of critical points~$\Lambda_n$ defined  in~\eqref{def:Lambdan}.

\begin{itemize}
\item[\textbf{H2}]For $n\in\N^*$, we assume that $\Hess \varrho_n$ is of constant rank in a neighborhood of each connected component of $\Lambda_n$. 
\end{itemize}

\begin{remark}
Let $X\subseteq \Lambda_n$ be a connected component of $ \Lambda_n$. By the constant rank level set theorem, 
this hypothesis implies that each connected component $X\subseteq \Lambda_n$ is a closed submanifold of $\R^d$
of dimension $d-\rk \Hess \varrho_n|_X$.
\end{remark}

Finally, our third set of hypothesis concerns the geometry of the crossing sets $\Sigma_n$.
For stating this assumption,
we introduce geometric objects associated with  a submanifold~$X$ of $(\R^d)^*$: we consider its tangent spaces $T_\xi X$ and 
 define the fibre of the normal bundle $NX$ of $X$ above $\xi\in X$ as the vector space~$N_\xi X$ consisting of those $\eta\in(\R^d)^{**}=\R^d$ that annihilate $T_\xi X$
\begin{equation}\label{def:NX}
NX:=\{(\xi,\eta )\in  X\times \R^d \, : \, \eta \cdot \zeta=0 ,\;\;\forall \zeta\in T_\xi X \}.
\end{equation}
With a Bloch mode $\varrho_n$  presenting crossings on a manifold $\Sigma_n$,  we associate the function~$g_n$ defined on~$N\Sigma_n$ by   
\begin{equation}\label{def:gn}
(\xi,\eta)\mapsto  g_n(\xi,\eta):=\frac12\left( \varrho_{n+1}(\xi+\eta) -\varrho_n(\xi+\eta)\right),\;\;\xi\in \Sigma_n,\;\;\eta\in N_\xi\Sigma_n.
\end{equation}
Note that $g_n(\xi,\eta)\geq 0$ and $g_n(\xi,\eta)=0$ if and only if $\eta=0$. Besides, for any $\xi\in\Sigma_n$, $\eta\mapsto g_n(\xi,\eta)$ is differentiable in all  $\eta\not=0$ (see Appendix~\ref{app:crossings}). We denote by $\nabla_\eta g_n(\xi,\eta)$ this differential, which can be identified with a vector of $N_\xi\Sigma_n\subset T_\xi (\R^d)^*$. 
\begin{definition}\label{def:con}
We say that the crossings of $\Sigma_n$ are conic if and only if there exists a neighborhood~$U$ of~$\Sigma_n$ such that~$\varrho_n$ 
and~$\varrho_{n+1}$ are of multiplicity~$1$ outside $\Sigma_n$ in $U$ and there exists $c>0$ such that 
\[\forall(\xi,\eta)\in N \Sigma_n,\;\;  |g_n(\xi,\eta)|\geq c  |\eta|.
\]
\end{definition}
Note that the critical sets $\Lambda_n$ contain no conical crossing point. Besides, one can prove (see Appendix~\ref{app:crossings}) that, generically, as soon as the crossing set $\Sigma_n$ is a closed submanifold of $\R^d$, either  $\varrho_n$ has a conical singularity along $\Sigma_n$, either $\varrho_n$ is in~$\mathcal C^{1,1}$. 
We set for  $n\in \N^*$:
\begin{equation}\label{def:lambda_g}
\lambda_n(\xi)=\frac 12 \left(\varrho_n(\xi)+\varrho_{n+1}(\xi)\right),\;\;\xi\in\R^d,\;\; n\in \N^*.
\end{equation}
and we introduce the following  last assumption

\begin{itemize}
\item[\textbf{H3}]For $n\in\N^*$, we assume that the crossing set $\Sigma_n$ is a smooth closed submanifold of $\R^d$. Moreover, the crossing is of conic type in the sense of Definition~\ref{def:con} and for all $\xi\in \Sigma_n $, $\eta\in N_\xi\Sigma_n$ with $\eta\not=0$, 
$$\nabla_\xi \lambda_{n} (\xi) \pm \nabla_{\eta} g_n(\xi,\eta) \not=0.$$
\end{itemize}

\subsubsection{The result: dispersion above conical crossings} 
For stating the result, we need to introduce other geometric objects associated with a 
 submanifold~$X$ of~$(\R^d)^*$.  We define its cotangent bundle  as the union of all cotangent spaces to $X$
\begin{equation}\label{def:T*X}
T^*X:=\{(\xi,x)\in X\times  \R^d \, : \, x \in T_\xi^* X \},
\end{equation}
each fibre $T_\xi^*X$ is the dual space of the tangent space $T_\xi X$. We shall denote by $\mathcal{M}_+(T^*X)$ the set of non-negative Radon measures on~$T^*X$. 
We observe that 
every point $x\in\R^d$ can be uniquely written as 
\[
x=v+z\;\; \mbox{ where} \;\; v\in T^*_\xi X\;\;\mbox{ and}\;\; z\in N_\xi X.
\]
Then,   given a function $\phi\in L^\infty(\R^d)$ and a point $(\xi,v)\in T^*X$, we denote by $m^X_\phi(\xi,v)$  the operator acting on $L^2(N_\xi X)$ by multiplication by $\phi(v+\cdot)$. 
We shall denote by $\mathcal L(L^2(N_\xi X))$ the set of bounded operators acting on $L^2(N_\xi X)$ and by $\mathcal L^1_+(L^2(N_\xi X))$ the set of operators  that are non-negative and trace-class.  
When $X=\Lambda_n$ and assumption {\bf H2} holds, we will consider the operator ${\rm Hess}\, \varrho_n(\xi)D_z\cdot D_z$ acting  on $N_\xi\Lambda_n$ for any $\xi\in\Lambda_n$.

\begin{theorem}\label{theorem0}
Assume {\bf H1}, {\bf H2} and {\bf H3}  are satisfied for all  $n\in\N^*$
 and consider $(\psi^\eps)_{\eps>0}$ a family of solutions to equation~\eqref{eq:schro} with an initial data $(\psi^\eps_0)_{\eps>0}$ that satisfies {\bf H0}. Then, there exist a subsequence $(\psi^{\eps_\ell}_0)_{\ell\in\N}$ of the initial data, a sequence of non negative measures $(\nu_n)_{n\in\N}$  on $T^*\Lambda_n$, and a sequence of measurable non negative
 trace-class operators $(M_n)_{n\in\N}$ 
$$M_n:T^*_\xi\Lambda_n\ni (\xi,v) \mapsto M_n(\xi,v)\in\mathcal L^1_+(L^2(N_\xi \Lambda_n)),\;\;{\rm Tr} _{L^2(N_\xi \Lambda_n)}M_n(\xi,v)  =1,$$
both depending only on  $(\psi^{\eps_\ell}_0)_{\ell\in\N}$,
such that for every $a<b$ and every $\phi\in{\mathcal C}_0(\R^d)$ one has
\begin{equation}\label{eq:theorem0}
\lim_{\ell\rightarrow +\infty} \int_a^b\int_{\R^d} \phi(x) |\psi^{\eps_\ell} (t,x)|^2 dx dt =
 \sum_{n\in\N} \int_a^b \int_{T^*\Lambda_n}{\rm Tr} _{L^2(N_\xi \Lambda_n)}
 \left(m^{\Lambda_n}_\phi(\xi,v)M^t_n(\xi,v)\right) 
 \nu_n(d\xi,dv)dt,
 \end{equation}
 where $t\mapsto M^t_n(\xi,v) \in\mathcal C(\R, \mathcal L^1_+(L^2(N_\xi \Lambda_n))$ solves the von Neumann equation
 \begin{equation}\label{eq:heis}
 \left\{\begin{array}l
 i\partial_t M^t_n (\xi,v) =\left[ \frac 12{\rm Hess} \varrho_n(\xi) D_z\cdot D_z+ m_{V_{\rm ext}}^{\Lambda_n} (\xi,v) \;,\; M^t_n(\xi,v) )\right] \\
 M_n^0= M_n.
 \end{array}
\right.
\end{equation}
(recall that $m^{\Lambda_n}_\phi(\xi,v)$ (resp. $m^{\Lambda_n}_{V_{\rm ext}}(\xi,v)$)  denotes the operator acting on $L^2(N_\xi \Lambda_n)$ by multiplication by $\phi(v+\cdot)$ (resp. $V_{\rm ext}(v+\cdot)$)). 
\end{theorem}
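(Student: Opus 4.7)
\medskip

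\noindent\textbf{Proof strategy for Theorem~\ref{theorem0}.}
The plan is to follow the two-microlocal strategy of~\cite{CFM1,CFM2,AFM15}, adapted so that the conical loci $\Sigma_n$ can be traversed without trapping energy. First, I would decompose the solution band by band using the Floquet--Bloch projectors: writing $\psi^\eps(t)=\sum_n \Pi_n(\eps D_x)\psi^\eps(t)$, the family $\{\Pi_n(\eps D_x)\psi^\eps\}$ is bounded in $L^2$ and the uniform $H^s_\eps$ bound on the data together with functional calculus for the periodic operator controls the tails in~$n$, reducing the problem to analyzing finitely many bands. For each band one writes the associated semiclassical Wigner transform $w_n^\eps(t,x,\xi)$ and, up to extraction, obtains a semiclassical measure $\mu_n(t,x,\xi)$ on $\R^d\times \R^d$ which is invariant under the Hamiltonian flow of~$\varrho_n$ on the open set where $\varrho_n$ is smooth, i.e.\ off $\Sigma_n$.

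Next, I would show that time-averaging concentrates $\mu_n$ on the critical set $\Lambda_n$. Because of time-averaging and Hamiltonian invariance, the support in $\xi$ is contained in $\Lambda_n\cup\Sigma_n$; the hypothesis \textbf{H3} (the transversality condition $\frac12\nabla(\varrho_{n+1}+\varrho_n)\pm\nabla_\eta g_n\neq 0$) together with conicity guarantees that the Hamiltonian vector fields associated with the upper and lower sheets do not vanish on $\Sigma_n$, so the Liouville argument---combined with a dyadic partition $\chi(g_n/(\delta\eps^\alpha))$ localising at scales between $\eps$ and $1$ around $\Sigma_n$---shows that the part of the measure carried by $\Sigma_n$ is absolutely continuous and vanishes after time averaging, as in~\cite{F1,FG02}; hence $\mathrm{supp}\,\bar\mu_n\subseteq \Lambda_n$. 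At this step the hypothesis that $\Lambda_n\cap\Sigma_n=\emptyset$ (mentioned in the paper) is crucial: critical points live strictly away from crossings, so one can safely zoom at scale $\eps$ around $\Lambda_n$ without interaction with the singular locus.

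Then I would introduce the two-microlocal Wigner distribution at scale $\eps$ transverse to $\Lambda_n$. For a connected component $X\subseteq\Lambda_n$, one uses coordinates adapted to the splitting $\R^d=T_\xi^*X\oplus N_\xi X$ and tests against symbols of the form $a(x,\xi,z)$ with $z\in N_\xi X$, evaluated at $z=(\xi-\xi_X)/\eps$. Standard manipulations (cf.~\cite{Fermanian_note1,NierScat,MillerThesis,AFM15}) yield, up to extraction, an operator-valued measure $(\xi,v)\mapsto M_n^t(\xi,v)$ on $T^*X$ which is trace-class, nonnegative, and whose fibre trace on $L^2(N_\xi X)$ gives the transverse part of $\mu_n$. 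The statement of the theorem then corresponds to identifying
\[
\lim \int_a^b\!\!\int\phi|\psi^{\eps_k}|^2 \,dxdt=\sum_n\int_a^b\!\!\int_{T^*\Lambda_n}\mathrm{Tr}\bigl(m_\phi^{\Lambda_n}(v,\xi) M_n^t(\xi,v)\bigr)\,\nu_n(d\xi,dv)\,dt,
\]
with $\nu_n$ the scalar trace and $M_n$ the normalized operator density.

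Finally, the propagation equation~\eqref{eq:heis} is obtained by a second-order expansion of the symbol $\varrho_n(\xi)$ around $\Lambda_n$: since $\nabla\varrho_n=0$ on $\Lambda_n$, $\varrho_n(\xi_X+\eps z)=\varrho_n(\xi_X)+\tfrac{\eps^2}{2}\mathrm{Hess}\,\varrho_n(\xi_X)z\cdot z+O(\eps^3|z|^3)$, so the commutator with $\tfrac{1}{\eps^2}P(\eps D_x)$ produces at the leading order $\tfrac12\mathrm{Hess}\,\varrho_n(\xi)D_z\cdot D_z$ acting on $N_\xi X$; the external potential $V_{\mathrm{ext}}(t,x)$ acts after freezing the tangential variable $v$ as multiplication by $V_{\mathrm{ext}}(t,v+\cdot)$, i.e.\ as $m^{\Lambda_n}_{V_{\mathrm{ext}}}(v,\xi)$. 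Hypothesis \textbf{H2} ensures this transverse Hessian has constant rank along each component of $\Lambda_n$, so the Heisenberg equation is globally well-defined.

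The main difficulty I expect is the crossing step: without \textbf{H3} the conical singularities of $\varrho_n$ could cause the semiclassical measure to concentrate on $\Sigma_n$ and create genuine interference between $\varrho_n$ and $\varrho_{n+1}$. Proving that the conic nondegeneracy forces no-trapping requires a careful normal-form/rescaling argument near $\Sigma_n$ at the intermediate scales $\eps^\alpha$, controlling the commutators $[\Pi_n(\eps D_x),\cdot]$ which are singular across $\Sigma_n$; the integrability of $1/|g_n|$ transversally to $\Sigma_n$ under the flow, together with the absence of critical points on $\Sigma_n$, is what ultimately allows the time averaging to absorb this singular contribution.
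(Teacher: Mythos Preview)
Your overall architecture matches the paper's: band-by-band decomposition via Floquet--Bloch (the paper routes this through the two-scale Ansatz $U^\eps$ and the projectors $P^\eps_{\varphi_n}$, with Lemma~\ref{lem:proj} controlling the tail in $n$, but the effect is what you describe), semiclassical measures $\mu^t_{n,n}$, localization on $\Lambda_n\cup\Sigma_n$ via Proposition~\ref{prop:easysupp}, a two-microlocal analysis at scale $\eps$ above $\Lambda_n$, and the Heisenberg equation from the quadratic Taylor expansion of $\varrho_n$. The derivation of~\eqref{eq:heis} you sketch is exactly the paper's (Theorem~\ref{theorem43}).

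The gap is in your treatment of the crossing set. You propose a dyadic partition at intermediate scales $\eps^\alpha$ together with a Liouville/time-averaging argument in the style of \cite{F1,FG02}. That is not what the paper does, and it is not clear your version closes. The paper performs a \emph{second} two-microlocal analysis, now above $\Sigma_n$ at scale $\eps$ (Theorem~\ref{theo:2mic1}), splitting the two-microlocal measure into a part at infinity $\gamma^t_{n,n}$ on $N^\infty\Sigma_n$ and an operator-valued part $M^t_{n,n}\,d\nu^t_{n,n}$ at finite distance. For $\gamma^t_{n,n}$ your flow-invariance intuition is correct: one obtains invariance under $(x,\sigma,\omega)\mapsto(x+s(\nabla\lambda_n(\sigma)-\nabla_\eta g_n(\sigma,\omega)),\sigma,\omega)$, and \textbf{H3} says this flow has no rest points, so $\gamma^t_{n,n}=0$. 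But the finite-distance part cannot be killed by a Liouville argument, because the unknown is an \emph{operator} $M^t_{n,n}(\sigma,v)$ on $L^2(N_\sigma\Sigma_n)$, not a measure on a phase space carrying a flow. What the paper proves instead (Lemma~\ref{lem:toto59}) is that $M^t_{n,n}$ commutes with the Fourier multiplier $Q^\Sigma_F(\sigma)$, where $F(\sigma,\eta)=\nabla^\perp\lambda_n(\sigma)\cdot\eta+g_n(\sigma,\eta)$, and then invokes a purely spectral fact (Lemma~\ref{lem:comm}): a positive trace-class operator commuting with $F(D_z)$, with $\nabla_\zeta F\neq 0$ off the origin, must vanish, since any eigenfunction would be forced, after a polynomial-relation argument, to have Fourier support in a level set of $F$, which has Lebesgue measure zero. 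Your ``integrability of $1/|g_n|$ transversally'' does not capture this mechanism, and the references \cite{F1,FG02} concern a different scaling (semiclassical without the simultaneous long-time limit $i\eps^2\partial_t$) where the issues are genuinely different.

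One smaller omission: you do not discuss the cross terms $\mu^t_{n,n\pm1}$. The paper disposes of them via the positivity structure~\eqref{eq:diag0}: $\mu^t_{n,n+1}$ is absolutely continuous with respect to $\mu^t_{n,n}$, which has just been shown to vanish on $\Sigma_n$, while Proposition~\ref{prop:easysupp}(2) forces $\mu^t_{n,n+1}$ to be supported there; hence $\mu^t_{n,n+1}=0$.
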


Above and throughout this article, when $A$ and $B$ are two operators acting on the same Hilbert space, the notation $[A,B]$ denotes the commutator $AB-BA$.
Several remarks are in order.
First, note  that  the $n$-th term of the sum in~\eqref{eq:theorem0}  measures how much the critical points of the $n$-th Bloch mode trap the energy and prevent the dispersion effects. Theorem~\ref{theorem0} also  tells that  conical crossings do not trap energy. This Theorem~\ref{theorem0} has exactly the same form than~Theorem~2.2 in~\cite{CFM2} while the assumptions are quite different since crossings between Bloch energies are authorized as long as they are conical. We shall see in the next subsection that crossing points may trap energy when they also are critical points of the Bloch energies (and thus they are no longer conical). 
\smallskip

Secondly, we emphasize that  Remark~\ref{rem:M0} (1)  comments the determination of $(M_n)_{n\in\N^*}$ and $(\nu_n)_{n\in\N^*}$ from the initial data. The special case where $\Lambda_n$ is a point is discussed in the next subsection. 
\smallskip

Thirdly, recall that  when $d=1$, the assumptions {\bf H1}, {\bf H2} and {\bf H3} are automatically satisfied (see Appendix~\ref{sec:1d}). Therefore, Theorem~\ref{theorem:1d} is a consequence of Theorem~\ref{theorem0} in the case where critical points are isolated.
\smallskip

Finally, we  emphasize that Theorem~\ref{theorem0} extends to situations where the Fourier transform of the  initial data is localized on a set of the form $\{\eps\xi\in \Omega+2\pi\Z^d\}$ for some open subset~$\Omega$ of a unit cell of $2\pi\Z^d$, provided the  assumptions {\bf H2} and {\bf H3}  are satisfied for all $n\in\N^*$ above points of $ \Omega+2\pi\Z^d$. We esquiss this approach in Section~\ref{sec:extension} and explain how the arguments of the proofs detailed below can be adapted to this setting by localisation (see Lemma~\ref{lem:Ueps}).

\subsubsection{The special case of isolated critical points} \label{subsec:single}
When {\bf H1}, {\bf H2} and {\bf H3} are satisfied for all $n\in\N^*$ and, moreover, all the sets $\Lambda_n$ consist in a family of isolated critical points then $T^*\Lambda_n=\Lambda_n\times \{0\}$, $N\Lambda_n=\R^d$ so that the operators $M_n^t$ only depend on the parameter $\xi\in\Lambda_n$ and the operator $m_\phi^{\Lambda_n}(\xi,v)$ simply is the operator of multiplication by $\phi$.
One can prove a statement very similar to Theorem~\ref{theorem:1d} (see also Corollary~1.3 in~\cite{CFM2} and Remark~\ref{rem:M0} (2)): the measures $\nu^t_n$ are linear combinations of Dirac masses at the points $\xi_n\in\Lambda_n$ and $M_n^t(\xi_n)$ are orthogonal projectors on $\C\psi^{\xi_n} (t)$, the solution to 
\begin{equation}\label{eq:tata}
i\partial_t \psi^{\xi_n}(t,x) = \frac 12 {\rm Hess} \varrho_n(\xi_n) D_x\cdot D_x\psi^{\xi_n}(t,x) + V_{\rm ext} \psi^{\xi_n}(t,x)
\end{equation}
with initial data  $\psi^{\xi_n}(0)$ that is a weak limit of $\left( {\rm e}^{-\frac i{\eps_\ell} x\cdot \xi_{n}} L^{\eps_\ell} \Pi_{n}(\eps_\ell D_x) U^{\eps_\ell}_0\right)_{\eps_\ell>0}$ in $L^2(\R^d)$.
  
In order to illustrate this type of result in the multidimensional setting, we state it in the particular case of  
well-prepared data that satisfy $U^\eps_0(y,x)=\varphi_{n_0}(y,\eps D_x) u_{n_0}^\eps(x)$ for some $n_0\in\N^*$ (and therefore $\Pi_n(\eps D_x) U^\eps_0=0$ for $n\not=n_0$). 
More specifically, the class of well-prepared initial data that we will consider is those of the form 
 \begin{equation}\label{def:wpdata}
    \psi^\eps_{0,n}(x):= \varphi_{n}\left(\frac x\eps,\eps D_x\right) u^\eps_{n}(x),\quad u^\eps_{n}(x)={\rm e}^{ \frac i\eps x\cdot \xi_{n}} v_{n}^\eps(x),\;\;n\in\N^*
 \end{equation}
with $\xi_{n}\in\Lambda_{n}$, $(v_{n}^\eps)_{\eps>0}$ bounded in $H^s(\R^d)$, $s>d/2$ or $s>1$ when $d=1$, and such that:
\[
v_{n}^\eps\rightharpoonup v_{n},\quad \eps\to 0^+,\quad \text{ in }L^2(\R^d).
\]
These data are closely related with those considered in~\cite{AP06,AP05} for example (this connection is explained in detail in Lemma~\ref{lem:APdata}).  Their main properties are studied in Appendix~\ref{app:wpdata}). 

\begin{proposition}\label{prop:special_case}
Assume {\bf H1}, {\bf H2} and {\bf H3}  are satisfied for all  $n\in\N^*$ and that, for some $n_0\in\N^*$, the set~$\Lambda_{n_0}$ is discrete. Consider $(\psi^\eps)_{\eps>0}$ a family of solutions to equation~\eqref{eq:schro} with  initial data~$\psi^\eps_0$ satisfying~\eqref{def:wpdata} for some~$\xi_{n_0}\in\Lambda_{n_0}$ and $v^\eps_{n_0}$, $v_{n_0}$ in $L^2(\R^d)$.
 Then, there exists a subsequence $(\psi^{\eps_\ell}_0)_{\ell\in\N}$ of the initial data  
 such that for every $a<b$ and every $\phi\in{\mathcal C}_0(\R^d)$ one has
\begin{equation*}
\lim_{\ell\rightarrow +\infty} \int_a^b\int_{\R^d} \phi(x) |\psi^{\eps_\ell} (t,x)|^2 dx dt =
\int_a^b  \int_{\R^d} \phi(x) |\psi^{\xi_{n_0}}(t,x)|^2 dt,
 \end{equation*}
 where $\psi^{\xi_{n_0}}(t)$ solves~\eqref{eq:tata} with initial data $\psi^{\xi_{n_0}}(0) = v_{n_0}$
\end{proposition}

If $v^\eps_0\to v_{n_0}$ as $\eps\to 0^+$ in $L^2(\R^d)$, then  one has for all $t\in\R$,  $\|\psi^\eps_0\|_{L^2}\to \| \psi^{\xi_{n_0}}(t)\|_{L^2}$,
 which implies that no energy is dispersed. As a consequence of Remark~\ref{rem:tata99}, one obtains that the result of Proposition~\ref{prop:special_case} holds locally in time: for all $T>0$, there exists a subsequence $(\psi^{\eps_\ell}_0)$
 of the initial data  
 such that for every $\phi\in{\mathcal C}_0(\R^d)$ one has for all $t\in [0,T]$
\begin{equation*}
\lim_{\ell\rightarrow +\infty} \int_{\R^d} \phi(x) |\psi^{\eps_\ell} (t,x)|^2 dx dt =
\int_{\R^d} \phi(x) |\psi^{\xi_{n_0}}(t,x)|^2.
 \end{equation*}
Proposition~\ref{prop:special_case} is an improvement of  Theorem~3.2 in~\cite{AP05} since it holds without any simplicity assumption on the Bloch energy $\varrho_{n_0}$.

\subsection{A non-generic case with interactions of Bloch bands above non-conical crossing points} 
In that section, we consider  degenerate crossing points that we define as follows. 

\begin{definition}\label{def:degq}
We say that the crossing set $\Sigma_n$ is degenerate if some $q\geq 2$ exists such that the function $g_n$ defined in~\eqref{def:gn}, satisfies
$$\exists c>0,\;\; \forall(\xi,\eta)\in N\Sigma_n ,\;|\eta|<1,\;g_n(\xi, \eta) \leq c|\eta|  ^q.$$
If the above inequality fails for any $q'>q$, we say the crossing set has degeneracy of order $q$.
\end{definition}

\subsubsection{Assumptions} 
We consider two Bloch energies that cross in a degenerate manner, though
 isolated from the remainder of the spectrum.

\begin{itemize}
\item[\textbf{H1'}] $\varrho_n(\xi)$ and $\varrho_{n+1}(\xi)$ are two Bloch energies that cross on $\Sigma_n$ and are of multiplicity~$1$ outside $\Sigma_n$.  
\item[\textbf{H2'}] $\Hess \varrho_n$ (resp. $\Hess \varrho_{n+1}$) is of constant rank  in a neighborhood of each connected component of $\Lambda_n$ (resp. $\Lambda_{n+1}$).
\item[\textbf{H3'}]  The crossing set $\xi_n$ is a smooth closed submanifold of $\R^d$ included in $\Lambda_n \cap \Lambda_{n+1}$ and is degenerated of order~$q$. Besides, $\Sigma_{n-1}=\Sigma_{n+1}=\emptyset$ and 
\begin{itemize}
    \item  if $q> 2$,  for all $\xi\in \Sigma_n$, the Hessian of $\lambda_n $ is of rank $d-{\rm Rank} \Sigma_n$ in a neighborhood of $\Sigma_n$,
    \item  if $q=2$, for all $(\xi,\eta)\in N\Sigma_n$  with $|\eta|=1$,
$$ {\rm Hess}\, \lambda_n (\xi)\eta\pm  \nabla _\eta g_n(\xi,\eta)\not=0  .$$
\end{itemize}
\end{itemize}

Note that the latter assumption can be considered as a maximal rank assumption. Indeed, if
$q> 2$, the Bloch energies are ${\mathcal C}^{2}$ and 
${\rm Hess} \, \varrho_n(\xi)= {\rm Hess} \, \varrho_{n+1}(\xi)= {\rm Hess} \, \lambda_n(\xi)$
above points $\xi\in \Sigma_n$. Moreover, if $q=2$ and
$(\xi_k,\omega_k)_{k\in\N}$ is a sequence of points  satisfying for all $k\in\N$,  $\omega_k\in N_{\xi_k} \Sigma_n$ with $|\omega_k|=1$, we have the following property:  if $(\xi_k,\omega_k)\Tend{\ell}{+\infty}(\xi,\omega)\in N \Sigma_n$ then 
$${\rm Hess} \, \varrho_n (\xi_k) \omega_k \Tend{(\xi_k,\omega_k)}{(\xi,\omega)}  {\rm Hess}\, \lambda_n(\xi) \omega   \pm   \nabla _\eta g_n(\xi,\omega).$$
This property shows the link between   $ {\rm Hess}\, \lambda_n (\xi)\eta\pm  \nabla _\eta g_n(\xi,\eta)$ and ${\rm Hess} \, \varrho_n (\xi)\eta$.  
\smallskip

The assumption {\bf H3'} implies that $\nabla\lambda_n=0$ on $\Sigma_n$.  We shall see in the proof that if~$\nabla \lambda_n$ does not vanish on~$\Sigma_n$, then these degenerate crossing points do not contribute. The result then is comparable to the one of Theorem~\ref{theorem0}.  Notice that one always has
$$
\Lambda_n\cap \Lambda_{n+1} = \Sigma_n \cap \Lambda_n=  \Sigma_n\cap \Lambda_{n+1}= \Sigma_n \cap \{ \nabla \lambda_n(\xi)=0\}.$$

At the difference with the results of the preceding section that were obtained under the assumption that {\bf H1}, {\bf H2} and~{\bf H3} were satisfied for all $n\in\N^*$, we will assume in this section that we have {\bf H1'}, {\bf H2'} and~{\bf H3'} for some single $n\in\N^*$ and we will choose well-prepared data concentrating on the bands $\varrho_n$ and $\varrho_{n+1}$ involved in the assumptions. 
\smallskip

We assume that 
\begin{equation}\label{data:wp}
\psi^\eps_0(x)= \varphi_n\left(\frac x\eps,\eps D\right) u^\eps_{n}(x) + \varphi_{n+1}\left(\frac x\eps,\eps D\right) u^\eps_{n+1}(x)
\end{equation}
where $\varphi_n$ (resp. $\varphi_{n+1}$) is the Bloch wave associated with $\varrho_n$ (resp. $\varrho_{n+1}$), and $(u^\eps_{n})_{\eps>0}$ and $(u^\eps_{n+1})_{\eps>0}$ are bounded families in $H^s_\eps(\R^d)$, $s>d/2$.  
These data are somehow more general than those of~\eqref{def:wpdata} since no assumption is made on $u^\eps_{n}$ ans $u^\eps_{n+1}$.

\subsubsection{The result: concentration above degenerate crossings}
We prove that for data as in~\eqref{data:wp}, the way their components interact  above the crossing set plays a role in the determination of the weak limits of the time-averaged energy density. 
We associate with the function $g_n$ defined on $N\Sigma_n$ (see~\eqref{def:gn})  the operator $Q_{g_n}^{\Sigma_n} (\xi)$ acting on $L^2(N_\xi \Sigma_n)$ as a Fourier multiplier.

\begin{theorem}\label{theo:interaction}
Let~$(\psi^\eps)_{\eps>0}$ be a family of solutions to equation~\eqref{eq:schro} with initial data  satisfying~\eqref{data:wp}. 
Assume the Bloch energies $\varrho_n$ and $\varrho_{n+1}$ satisfies~{\bf H1'},~{\bf H2'} and~{\bf H3'} with
 $\Lambda_n=\Lambda_{n+1}=\Sigma_n$.
Then, there exists a subsequence $(\psi^{\eps_\ell}_0)_{\ell\in\N}$ of the initial data,
 a non negative measure $\nu^0\in\mathcal M^+(\Sigma_n)$  and a matrix~$M$ of measurable trace-class operators   
\begin{align*}
M:\,&T^*_\xi\Sigma_n\ni (\xi,v) \mapsto M(\xi,v) 
\in\mathcal L^1_+(L^2(N_\xi \Sigma_n,\C^2)),\;\;{\rm Tr} _{L^2(N_\xi \Sigma_n,\C^2)}M (\xi,v)  =1\; d\nu^0\; a.e.
\end{align*}
both depending only on   $(\psi^{\eps_\ell}_0)_{\ell\in\N}$, 
such that
for every $a<b$ and every $\phi\in{\mathcal C}_0(\R^d)$ one has
\begin{align*}
&\lim_{\ell\rightarrow +\infty} \int_a^b\int_{\R^d}  \phi(x) |\psi^{\eps_\ell} (t,x)|^2 dx dt\\
&\; =   \int_a^b \int_{T^*\Sigma_n}{\rm Tr} _{L^2(N_\xi \Sigma_n, \C)}[m^{\Sigma_n}_\phi(\xi,v)
(m_n^t + m_{n+1}^t +2 {\rm Re} (m_{n,n+1}^t ))(\xi,v) ] \nu^0(d\xi, dv)dt ,
 \end{align*}
 where 
  \begin{equation*}
M^t(\xi,v) = \begin{pmatrix} m_n^t(\xi,v) & m_{n,n+1}^t(\xi,v) \\ m_{n,n+1}^t(\xi,v)^* & m_{n+1}^t(\xi,v)\end{pmatrix}
\end{equation*}
is a non negative trace class operator on $L^2( N_\xi \Sigma_n,\C^2) $.
Besides, the map 
$$t\mapsto M^t (\xi,v)\in\mathcal C(\R, \mathcal L^1_+(L^2(N_\xi \Sigma_n, \C^2))$$
solves a von Neumann equation that depends on the value of q:
 \begin{itemize}
\item If $q>2$, it solves 
 \begin{equation}\label{ed:heis2bis}
 i\partial_t M^t (\xi,v)  =\left[ \left(\frac 12{\rm Hess} \lambda_n(\xi) D_z\cdot D_z  + m^{\Sigma_n}_{V_{\rm ext}}(\xi,v)\right){\rm Id_{\C^2}}  \;,\; M^t(\xi,v) \right],\;M^0=M. 
\end{equation}
\item If $q=2$, it solves 
\begin{equation}\label{ed:heis2}
i\partial_ t M^t (\xi,v) =
 \left[ 
 \left(\frac 12 {\rm Hess}\, \lambda_n(\xi) D_z\cdot D_z + m^{\Sigma_n}_{V_{\rm ext}(t,\cdot)}(\xi,v) \right) {\rm Id}_{\C^{2}}
  - Q^{\Sigma_n} _{g_n}(\xi)  \, J \;,\; M^t(\xi,v) 
  \right],\; M^0=M \\
\end{equation}
with  $J=\begin{pmatrix} 1 & 0 \\ 0 & -1 \end{pmatrix}$. 
\end{itemize}
(recall that $m^{\Sigma_n}_\phi(\xi,v)$ (resp. $m^{\Sigma_n}_{V_{\rm ext}}(\xi,v)$)  denotes the operator acting on $L^2(N_\xi \Sigma_n)$ by multiplication by $\phi(v+\cdot)$ (resp. $V_{\rm ext}(v+\cdot)$), and $Q^{\Sigma_n} _{g_n}(\xi)$ is the Fourier multiplier on $L^2(N_\xi \Sigma_n)$ associated with the function $g_n(\xi, \cdot)$). 
\end{theorem}

In the latter statement, we have assumed for simplicity that the critical sets of the two Bloch energies~$\varrho_n$ and~$\varrho_{n+1}$ coincide with the crossing set $\Sigma_n$.  We indeed prove this result without this assumption; however, the resulting statement is more involved (see Theorem~\ref{theo:general2cross'}).

\subsubsection{The special case of isolated degenerate crossing points} 
When $\Sigma_n$ consists in a family of isolated degenerate crossing points, the preceding statement admits a simpler, more transparent formulation. Indeed, as in Section~\ref{subsec:single},  the operator $M^t$ only depend on the parameter $\xi\in\Sigma_n$ and the operator $m_\phi^{\Lambda_n}(\xi,v)$ simply is the operator of multiplication by $\phi$ and $Q^{\Sigma_n}_{g_n}= g_n(\xi, D_x)$.
We now assume that $\psi^\eps_0$ satisfies~\eqref{data:wp} with 
\[ u^\eps_{n}(x)={\rm e}^{ \frac i\eps x\cdot \xi_{n}} v_{n}^\eps(x)\;\;\mbox{and}
\;\;u^\eps_{n+1}(x)={\rm e}^{ \frac i\eps x\cdot \xi_{n}} v_{n+1}^\eps(x)
\]
with $\xi_{n}\in\Sigma_{n}$, and for $j\in\{n,n+1\}$, $(v_{j}^\eps)_{\eps>0}$ bounded in $H^s(\R^d)$, $s>d/2$ or $s>1$ when $d=1$, and such that:
\[
v_{j}^\eps\rightharpoonup v_{j},\quad \eps\to 0^+,\quad \text{ in }L^2(\R^d).
\]

\begin{proposition}\label{prop:special_cas2}
Let~$(\psi^\eps)_{\eps>0}$ be a family of solutions to equation~\eqref{eq:schro} with initial data  satisfying~\eqref{data:wp}. 
Assume the Bloch energies $\varrho_n$ and $\varrho_{n+1}$ satisfy~{\bf H1'},~{\bf H2'} and~{\bf H3'}.
 Assume moreover $\Lambda_n=\Lambda_{n+1}=\Sigma_n$ consists in a family of isolated critical crossing points.
 Then, there exists a subsequence $(\psi^{\eps_\ell}_0)_{\ell\in\N}$ of the initial data  
 such that for every $a<b$ and every $\phi\in{\mathcal C}_0(\R^d)$ one has
\[
\lim_{\ell\rightarrow +\infty} \int_a^b\int_{\R^d} \phi(x) |\psi^{\eps_\ell} (t,x)|^2 dx dt =
\sum_{\xi\in \Sigma_n}
\int_a^b  \int_{\R^d} \phi(x) |\psi^{\xi_n}_n(t,x)+\psi^{\xi_n}_{n+1}(t,x)|^2 dx dt,
\]
 where
 \begin{itemize}
     \item if $q>2$,  for $j\in\{n,n+1\}$, $\psi^{\xi}_{j}$  solves the effective mass Schr\"odinger equation 
 \begin{equation}\label{eq:tata'}
i\partial_t \psi^{\xi_n}_j(t,x) = \frac 12 {\rm Hess} \lambda_n(\xi_n) D_x\cdot D_x\psi^{\xi_n}_j(t,x) + V_{\rm ext}(t,x) \psi^{\xi_n}_j(t,x)
\end{equation}
\item if $q=2$, for $j\in\{n,n+1\}$, $\psi^{\xi}_{j}$  solves the effective mass Schr\"odinger equation 
 \begin{equation}\label{eq:tata''}
i\partial_t \psi^{\xi_n}_j(t,x) = \frac 12 {\rm Hess} \lambda_n(\xi_n) D_x\cdot D_x\psi^{\xi_n}_j (t,x)+\eta_j g_n(\xi_n, D_x)\psi^{\xi_n}_j (t,x)+ V_{\rm ext} (t,x)\psi^{\xi_n}_j(t,x),
\end{equation}
with $\eta_n=1 \;\mbox{and}\; \eta_{n+1}=-1$. 
 \end{itemize}
Besides, for $j\in\{n,n+1\}$, the initial data  
 $\psi^{\xi_n}_{j}(0)=v_j$.
\end{proposition}

We stress the fact that, in contrast to what happened in the presence of a singular crossing, the description of the limiting position density limit involves a term of the form $2{\rm Re} (\psi^{\xi_n} \overline{\psi^{\xi_{n+1}}})$ which takes into account the coupling between the modes corresponding to the two Bloch bands.

\subsection{Ideas of the proofs and organisation of the paper}
We follow the semi-classical approach developed in~\cite{CFM1,CFM2} which is based on semi-classical analysis. In these references, the Bloch energies in consideration are smooth, and we have  exhibited the role of the critical points of the Bloch energies as principal contributors to the weak limits of the time-averaged energy densities. We have also explained how a second microlocalisation allows to compute quantitatively this contribution. We follow here this scheme of thoughts with additional 
  difficulties that are two-fold. 
\smallskip

 Firstly, in order to consider general initial data as in Theorem~\ref{theorem:1d} and~\ref{theorem0},  and to decompose them on the Bloch energies, we shall need to treat infinite series. The assumption that the data satisfy~{\bf H0} 
is the key point that we use technically for treating this issue. We explain in Section~\ref{sec:prooflemma} how we perform the decomposition and which properties of the solution we use. 
\smallskip

 The second difficulty comes from the lack of regularity of the Bloch energies close to the crossing sets, which requires to perform semi-classical calculus with symbols of low regularity, what we do by using and developing ideas from~\cite{FGL}. We explain and construct  in Sections~\ref{sec:basic} and~\ref{section:twomic} the semi-classical and two-microlocal analysis of our problem. 
\smallskip 

 This laid us to the statement of two theorems that are interesting by themselves: in  Theorems~\ref{theo:2mic1} and~\ref{theo:2mic2}, we describe the evolution of two-microlocal semi-classical measures associated to the concentration of the solutions of~\eqref{eq:schro} on one of the sets of critical points~$\Lambda_n$, $n\in\N^*_0$, and in the context given by hypothesis~\textbf{H1}, \textbf{H2} and~\textbf{H3} for the first one and~\textbf{H1'}, \textbf{H2'} and~\textbf{H3'} for the second one. These two theorems are proved in Sections~\ref{sec:proof1} and~\ref{sec:proof2} respectively; they are the core of the proofs of Theorems~\ref{theorem0} and~\ref{theo:interaction}, which are performed themselves in Section~\ref{sec:proofs}, together with the proof of Theorem~\ref{theorem:1d}, Propositions~\ref{prop:special_case} and~\ref{prop:special_cas2}. 
\smallskip 

Finally,  some appendices are  devoted to technical elements that we use in the proofs of this paper:  special features of the Bloch decomposition in dimension~1 (Appendix~\ref{sec:1d}), properties of the Bloch energies at a crossing (Appendix~\ref{app:crossings}), elements of matrix-valued pseudo-differential calculus, in particular with low regularity (Appendix~\ref{app:pseudo}), two scale pseudodifferential calculus (Appendix~\ref{app:twomic}) and various remarks above well-prepared data (Appendix~\ref{app:wpdata}). 
\bigskip

\noindent \textbf{Acknowledgement. }VC was supported by
grant 2017/13865-0, São Paulo Research Foundation (FAPESP). FM is supported by grant MTM2017-85934-C3-3-P, MINECO, Spain. CFK and FM are grateful to the Technische Universit\"at M\"unchen and especially Professors Caroline Lasser and Simone Warzel for their hospitality during the redaction of this article and to the National Science Foundation under Grant No. DMS-1440140 while they were in residence at the Mathematical Sciences Research Institute in Berkeley, California, during the fall 2019 semester.  Finally, the authors thank the anonymous referees for their careful reading and their remarks that have greatly contributed to ameliorate our manuscript.


\section{Separation of scales and control of the oscillations}\label{sec:prooflemma}

Here we present the first steps of the strategy that will lead to the proof of Theorems~\ref{theorem:1d},~\ref{theorem0} and~\ref{theo:interaction}. 
 Our starting point is the following \textit{Ansatz} that is widely used in this context and consists in separating the slow and fast scales of oscillation. We look for a solution to \eqref{eq:schro} of the form:
\begin{equation}\label{def:U}
\psi^\eps(t,x)=U^\eps\left(t,x,{x\over \eps}\right),\;\;(t,x)\in\R\times\R^d,
\end{equation}
where $U^\eps(t,\cdot,\cdot)$ is a function on $\R^d\times\T^d$. In order to make sense of this, some regularity on the solutions $\psi^\eps$ is required, and this partly justifies our assumption {\bf{ H0}}.  Uniqueness for solutions to the initial value problem for the Schr\"odinger equation~(\ref{eq:schro}) with initial data satisfying {\bf H0} implies that~\eqref{def:U} holds provided $U^\eps$ is a solution to the system 
\begin{equation}\label{eq:U}
\left\{\begin{array}{l}
i\eps^2\partial_t U^\eps (t,x,y)=P(\eps D_x)U^\eps (t,x,y)+\eps^2 V_{\rm ext}(t,x) U^\eps(t,x,y), \vspace{0.2cm}\\
U^\eps_{|t=0}= U^\eps_0,\end{array}\right.
\end{equation}
By Lemma~6.5 in~\cite{CFM2}, this equation is well-posed  in $H^s_\eps(\T^d\times \R^d)$:
there exists $C_s>0$ such that for every $t\in\R$ and $U^\eps_0\in H^s_\eps(\R^d\times \T^d)$,
\begin{equation}\label{eq:wpxy}
\|U^\eps(t,\cdot)\|_{H^s_\eps(\R^d\times\T^d)}\leq \|U^\eps_0\|_{H^s_\eps(\R^d\times\T^d)}+C_s\eps|t|,
\end{equation}
uniformly in  $\eps >0$.
\smallskip

We shall use a decomposition of 
 the solution $U^\eps$ in the basis of  Bloch modes. 
 We set, for $j\in\N_0$,
 \begin{equation}\label{def:Pphi}
P^\eps_{\varphi_j} W(x,y):=\varphi_j\left(y,\eps D_x\right)\int_{\T^d}\overline{\varphi_j}(z,\eps D_x) W(x,z)dz,\;\;\forall W\in L^2(\T^d\times\R^d).
\end{equation}

\begin{lemma}\label{lem:proj}
If $(\psi^\eps_0)_{\eps>0}$ satisfies {\bf H0}, then for all $t\in \R$, the solution of~\eqref{eq:schro} is given by
\begin{equation}\label{eq:sum}
\psi^\eps(t,\cdot)=\sum_{n\in\N^*} \psi^\eps_n(t,\cdot),
\end{equation}
where the convergence of the series takes place in $L^2(\R^d)$ and
\begin{equation}\label{eq:psiepsn}
\psi^\eps_n(t,x):=L^\eps P^\eps_{\varphi_n} U^\eps(t,x)= \varphi_n\left(\frac {x}\eps , \eps D_x\right) \int_{\T^d}\overline{\varphi_n}(y,\eps D_x) U^\eps(t,x,y) dy.
\end{equation}
Moreover, for every $t\in\R$, 
\begin{equation}\label{convergence} 
\limsup_{\eps\to 0^+}  \left\| \sum_{n>N} \psi^\eps_n(t,\cdot)\right\|_{L^2(\R^d)} 
\Tend{N}{\infty}0.
\end{equation}
\end{lemma}
The proof of this result requires two important technical facts that we gather in the next two remarks. 
\begin{remark}\label{rem:eqnorm}
Modulo the addition of a positive constant to equation \eqref{eq:schro}, we may assume that $P(\eps D_x)$ is a positive
operator (this will modify the solutions only by a constant phase in time). In that case there exists constants $\eps_0,c>0$ such that:
\[
c^{-1}\|U\|_{H^s_\eps(\R^d\times\T^d)}\leq\
\|\left\langle\eps D_x\right\rangle^s U\|_{L^2(\R^d\times\T^d)}+\|P(\eps D_x)^{s/2}U\|_{L^2(\R^d\times\T^d)}
\leq
 c\|U\|_{H^s_\eps(\R^d\times\T^d)},
 \]
for every $U\in H^s(\R^d\times\T^d)$ and $0<\eps<\eps_0$.
\end{remark}

\begin{remark}\label{rem:ueq}
In view of \eqref{eq:underline}, and the fact that $P(\xi)$ depends analytically on $\xi$, follows that $\xi\mapsto \varphi_j(\cdot,\xi)$, $j\in\N^*$,  are continuous functions from  $\R^d$ to $L^2(\T^d)$ (see also \cite{Wilcox78}) and that, for every $s>0$, the family $P(\xi)^s$ depends continuously on $\xi$ (with $P(\xi)$ positive).
\end{remark}

These remarks imply the boundedness of the operators $P_{\varphi_j}$ in $H^s_\eps(\T^d\times \R^d)$ for all $j\in\N_0$. To see this, note that 
Remark \ref{rem:ueq} implies that formula~\ref{def:Pphi}
defines a bounded operator on $L^2(\T^d\times \R^d)$. Moreover, we have 
\[
[P(\eps D_x)^{s/2},P^\eps_{\varphi_j}]=[\left\langle\eps D_x\right\rangle^s,P^\eps_{\varphi_j}]=0,
\]
if follows from Remark \ref{rem:eqnorm} that there exists $c_1>0$ such that for all $W\in H^s_\eps(\R^d\times \T^d)$,
\[
\|P^\eps_{\varphi_j}W\|_{H^s_\eps(\R^d\times\T^d)}\leq  c_1\|W\|_{H^s_\eps(\R^d\times\T^d)},
\]  
and, more generally, that every $W\in H^s_\eps(\R^d\times\T^d)$ can be expressed in the topology of $H^s_\eps(\R^d\times\T^d)$ as:
\[
W=\sum_{n\in\N^*}P^\eps_{\varphi_n} W.
\]

\begin{proof}[Proof of Lemma \ref{lem:proj}] The boundedness in $H^s_\eps(\T^d\times \R^d)$ of the operator $P_{\varphi_j}$ and the boundedness of $L^\eps$ from $H^s_\eps(\T^d\times \R^d)$ to $L^2(\R^d)$ for $s>d/2$ imply that that~~\eqref{eq:sum} holds in $L^2(\R^d)$. 

It remains to prove~\eqref{convergence}. 
In view of~\eqref{eq:wpxy},~\eqref{def:Leps},
it is enough to show that if
 $(V^\eps)_{\eps>0}$ is a  bounded family in $H^s_\eps(\R^d\times \T^d )$, $s>d/2$, we have, for $d/2<r<s$, 
\[
\limsup_{\eps\to 0^+}\left\|\sum_{n>N} P^\eps_{\varphi_n} V^\eps\right\|_{H^r_\eps(\R^d\times \T^d)}\Tend{N}{\infty}0.
\]
Remark~\ref{rem:eqnorm} implies that we only have to prove
\begin{equation}\label{eq1}
\limsup_{\eps\to 0^+}\left\|\sum_{n>N} P(\eps D_x)^{r/2}P^\eps_{\varphi_n} V^\eps\right\|^2_{L^2(\R^d\times\T^d)}
+ \limsup_{\eps\to 0^+}\left\|\sum_{n>N} \langle\eps D_x\rangle^{r} P^\eps_{\varphi_n} V^\eps  \right\|^2_{L^2(\R^d\times\T^d)}\Tend{N}{\infty}0.
\end{equation}
We thus focus on proving~\eqref{eq1}.
\smallskip 

Let us consider the series $\sum_{n>N} P(\eps D_x)^{r/2} P^\eps_{\varphi_n} V^\eps$  (the proof for~$\sum_{n>N}\langle\eps D_x\rangle^{r} P^\eps_{\varphi_n} V^\eps$
is similar).
In view of \eqref{def:Pphi}, 
\begin{align*}
P(\eps D_x) P^\eps_{\varphi_n} V^\eps(x,y)& =\varphi_n(y,\eps D_x)  \varrho_n(\eps D_x)   \,\int_{\T^d} \overline{\varphi_n}(z,\eps D_x)  V^\eps(x,z) dz,
\end{align*}
This implies 
$$\left\| \sum_{n>N} P(\eps D_x)^{r/2}P^\eps_{\varphi_n} V^\eps \right\|^2_{L^2(\R^d\times\T^d)}=
\sum_{n>N} \left\|P(\eps D_x)^{r/2}P^\eps_{\varphi_n} V^\eps\right\|^2_{L^2(\R^d\times\T^d)}.$$
 We decompose $V^\eps$ in Fourier series and write $V^\eps(x,y) =\sum_{j\in\Z^d} V^\eps_j(x) {\rm e}^{2i\pi j\cdot y}$, whence
\begin{align*}
P(\eps D_x) P^\eps_{\varphi_n} V^\eps(x,y)& =\varphi_n(y,\eps D_x)  \sum_{j\in\Z^d} \varrho_n(\eps D_x)   \,\left(\int_{\T^d} \overline{\varphi_n}(z,\eps D_x) {\rm e}^{2i\pi j\cdot z}  dz \right)V^\eps_j(x)
\end{align*}
and by functional calculus
\begin{align*}
P(\eps D_x)^{r/2} P^\eps_{\varphi_n} V^\eps(x,y)& =\varphi_n(y,\eps D_x)  \sum_{j\in\Z^d} 
d_{n} (\eps D_x,j)
V^\eps_j(x)
\end{align*}
with
\[
d_{n} (\xi,j)= \varrho_n(\xi)^{r/2}   \,\left(\int_{\T^d} \overline{\varphi_n}(z,\eps D_x) {\rm e}^{2i\pi j\cdot z}  dz \right)
\]
We use three observations. 
\begin{enumerate}
\item First, if  $\delta>0$ is fixed, there exists~$J_0$ such that
$$\limsup_{\eps\to 0^+} \sum_{|j|>J_0}\int_{\R^d} (1+|\eps\xi|^2 +|j|^2) ^{r}  |\widehat{V^\eps_j}(\xi)|^2d\xi <\delta .$$ 
To see this note that:
\[
\sum_{|j|>J_0}\int_{\R^d} (1+|\eps\xi|^2 +|j|^2) ^{r}  |\widehat{V^\eps_j}(\xi)|^2 d\xi \leq (1+|J_0|^2)^{r-s} \|V^\eps\|_{H^{s}(\R^d\times \T^d)}^2,
\]
due to the definition of the $H^{s}_\eps$-norm~\eqref{def:normHs}. Since 
$(V^\eps)_{\eps>0}$ is uniformly bounded in $H^{s}_\eps(\R^d)$, the claim follows.

\item Second, given $\delta>0$ and $J_0\in\N$, one can find $R=R(\delta, J_0) >0$ such that, 
$$
 \limsup_{\eps\to 0^+}\sum_{|j|< J_0}\int_{|\eps\xi|>R}  (1+|\eps\xi|^2 +|j|^2) ^{r} |\widehat{V^\eps_j}(\xi)|^2d\xi <\delta.
$$
This follows from the estimate:
\begin{align*}
\int_{|\eps\xi|>R}  (1+|\eps\xi|^2 +|j|^2) ^{r}  |\widehat{V^\eps_j}(\xi)|^2 d\xi 
&\leq 
(1+R^2)^{r-s}\|V^\eps\|_{H^{s}(\R^d\times \T^d)}^2,
\end{align*}
and again from the fact that $(V^\eps)_{\eps>0}$ is uniformly bounded in $H^s_\eps(\R^d\times \T^d)$. 

\item Third, given $J_0, R>0$, 
\begin{equation*}
D_N(R,J_0):=\sup_{|j|\leq J_0}\sup_{|\xi|\leq R}\sum_{n>N}\left|
d_{n}(\xi,j)
\right|^2\Tend{N}{\infty}0.
\end{equation*}
To see why this holds note that,
for $j\in\Z^d$,
\begin{equation}\label{def:pk}
\R^d\ni \xi\longmapsto\sum_{n\in\N^*}\left| 
d_{n} (\xi,j)
\right|^2=\left\|P(\xi)^{r/2} {\rm e}^{2i\pi j\cdot } \right\|_{L^2(\T^d)}^2
\in (0,\infty)
\end{equation}
is a non-negative continuous function.
The claim then follows from Dini's theorem, which ensures that for every $R>0$, $j\in\Z^d$ one has:
\begin{equation*}
\sup_{|\xi|\leq R}\sum_{n>N}\left|
d_{n}(\xi,j)
\right|^2\Tend{N}{\infty}0.
\end{equation*}
\end{enumerate}
We now use these observations to treat  the series whose terms are 
\begin{align*} 
\left\| P(\eps D_x)^{r/2} P^\eps_{\varphi_n}V^\eps\right\|_{L^2(\R^d\times\T^d))}^2
&= \sum_{j\in\Z^d} 
\int _{\R^d} 
|d_{n}(\eps \xi,j) |^2
|\widehat V^\eps_j(\xi)|^2
 d\xi.
\end{align*} 
Fix $\delta>0$, and consider $J_0$ given by Point (1)
and $R=R(\delta,J_0)$ given by Point (2). Decompose the sum of integrals in three terms 
$$
\sum_{j\in\Z^d} \int_{\R^d} = \sum_{|j|\leq J_0} \int_{|\eps\xi|\leq R} + \sum_{|j|\leq J_0} \int_{|\eps\xi| > R} 
+\sum_{|j| > J_0} \int_{\R^d}.
$$
We start by analyzing the third term. Note that 
\[
\sum_{n\in\N^*}|d_n(\xi,j)|^2=\left\|P(\xi)^{r/2} {\rm e}^{2i\pi j\cdot } \right\|_{L^2(\T^d)}^2\leq c_r(1+|\xi|^2+|j|^2)^{r}
\]
Therefore,
\begin{align*}
 \limsup_{\eps\to 0^+}\sum_{n>N} \sum_{|j| > J_0}& \int_{\R^d} 
|d_n (\eps\xi ,j)|^2
|\widehat V^\eps_j(\xi)|^2
 d\xi
 \leq  \limsup_{\eps\to 0^+}\sum_{|j| > J_0} \int_{\R^d} 
\sum_{n\in\N^*} |d_n (\eps\xi ,j)|^2
|\widehat V^\eps_j(\xi)|^2
 d\xi
 \\
 &\leq c_r \limsup_{\eps\to 0^+}  \sum_{|j| > J_0}  \int_{\R^d} (1+|\eps\xi|^2 +|j|^2) ^{r}  |\widehat V^\eps_j(\xi)|^2
 d\xi <c_r \delta,
\end{align*}
using observation (1).

The second term is analyzed using observation (2):
\begin{align*}
 \limsup_{\eps\to 0^+} \sum_{n>N} \sum_{|j|\leq J_0} 
\int_{|\eps\xi| > R}& 
|d_n (\eps\xi ,j)|^2
|\widehat V^\eps_j(\xi)|^2
 d\xi\\
 &\leq  c_r \limsup_{\eps\to 0^+} \sum_{|j|\leq J_0}  \int_{|\eps\xi| > R}  (1+|\eps\xi|^2 +|j|^2) ^{k}  |\widehat V^\eps_j(\xi)|^2
 d\xi<c_r \delta .
\end{align*}
Observation (3) ensures that 
\begin{align*}
\sum_{n>N} \sum_{|j|\leq J_0} &\int_{|\eps\xi|\leq R}
|d_n (\eps\xi ,j)|^2
|\widehat V^\eps_j(\xi)|^2
 d\xi
 \leq 
 D_N(R, J_0) \|V^\eps\|_{L^2(\R^d\times\T^d)}^2.
\end{align*}
As a consequence of this analysis:
$$\limsup_{N\to +\infty}\;  \limsup_{\eps\to 0^+}
\sum_{n>N} \sum_{j\in\Z^d} \int_{\R^d} \left| \int_{\T^d}  \varrho_n(\eps \xi)^{r/2} \overline{\varphi_n}(z,\eps\xi) {\rm e}^{2i\pi j\cdot z} dz\right|^2 |\widehat V^\eps_j(\xi)|^2
 d\xi<2c_r\delta .$$
Since $\delta$ is arbitrary, the result follows. 
\end{proof}

Lemma~\ref{lem:proj}  provides an important element of  the proof of the Theorems~\ref{theorem:1d} and~\ref{theorem0} of this paper. It allows to reduce the problem to solutions consisting only of finite superposition of Bloch modes, that we are going to study with a semi-classical perspective, as explained in the next section.


\section{Semi-classical approach to the energy dynamics }\label{sec:basic}

The nature of the propagation of the asymptotic energy density for high-frequency solutions to semi-classical dispersive-type equations is better understood if the usual, physical-space, energy density is lifted to a phase-space energy density. There is no canonical lifting procedure, roughly speaking these choices correspond to different quantization procedures. Here we will work with the lifting procedure that corresponds to the Weyl quantization, from which the Wigner functions are obtained (see the definition in~\eqref{def:Wigner}). 

\smallskip 

It should be noted that although the asymptotic limit in equation \eqref{eq:schro} we are interested might not appear to fit in the semi-classical regime one can indeed place it in that context. One can check that any solution $\psi^\eps$ of \eqref{eq:schro} becomes, after rescaling in time as $u^\eps(t,\cdot):=\psi^\eps(\eps t,\cdot)$, a solution to a semi-classical Schrödinger equation with highly oscillating potential:
\[
i\eps \partial_t u^\eps(t,x)+\frac{\eps^2}{2}\Delta_x u^\eps(t,x)-V_{\rm per}\left(\dfrac{x}{\eps}\right)u^\eps(t,x) - \eps^2V_{\rm ext}(\eps t,x)u^\eps(t,x) =0.
\] 
Hence, the asymptotic limit we are interested in can be viewed as performing simultaneously the semi-classical and long-time limits. This approach was pursued in \cite{CFM1, CFM2} to deal with the case where no crossings between Bloch bands are present, and this point of view is also adopted in references~\cite{AP05,Spar06}. Here the situation is more complicated, as interactions between projections on different Bloch bands may occur. This regime involving performing simultaneously the semi-classical and long time limit  has been useful in other contexts, we refer the reader to the survey articles \cite{AM12,MaciaDispersion,MaciaLille}.

 \smallskip 
 
 In Section~\ref{sec:semiclas} below, we recall elements of the theory of semi-classical measures that we apply to $(\psi^\eps)_{\eps>0}$ in the next two sections.  We first  discuss in Section~\ref{subsec:psiepspsiepsn} the relations between the semi-classical measures of~$(\psi^\eps)_{\eps>0}$ and those of the families $(\psi^\eps_n)_{\eps>0}$ that have been introduced in~\eqref{eq:sum} and~\eqref{eq:psiepsn}. Then, we analyze the localisation properties of these semi-classical measures in Section~\ref{subsec:wigner}, which motivates a two-microlocal approach. 
\subsection{Semi-classical measures and energy densities}\label{sec:semiclas}

 Let us recall briefly some basic facts of the theory of semi-classical measures \cite{Ge91,GerLeich93,LionsPaul} that will be needed in the sequel. From now on, for every $s\in\R,N\in\N^*$, $H^s_\eps(\R^d,\C^N)$ will denote the space $H^s_\eps(\R^d)^N$ equipped with the norm:
\[
\|\Psi\|_{H^s_\eps(\R^d,\C^N)}=\left(\sum_{j=1}^N\|\Psi_j\|_{H^s_\eps(\R^d)}^2\right)^{1/2},\;\;\Psi=(\Psi_1,\cdots, \Psi_N).
\] 
We associate to every $\Psi\in L^2(\R^d,{\C}^N):=H^0_\eps(\R^d,\C^N)$ a 
 microlocal version $W^\eps_{\Psi}$ of the (matrix-valued) energy density 
 $$\Psi\otimes\overline{\Psi}=(\Psi_i\overline \Psi_j)_{1\leq i,j\leq N}\in \C^{N\times N}.$$
 The matrix-valued function $W^\eps_{\Psi}\in L^2(\R^{2d},\C^{N\times N})$ is defined by 
\begin{equation}\label{def:Wigner}
W^\eps_{\Psi}(x,\xi)=\int_{\R^d} {\rm e}^{i\xi\cdot v}\Psi\left(x-\frac{\eps v}{2}\right)\otimes\overline{\Psi}\left(x+\frac{\eps v}{2}\right) \frac{dv}{(2\pi)^d},
\end{equation}
and its action on  symbols $a\in{\mathcal C}_0^\infty(\R^{2d},\C^{N\times N})$  is related with semi-classical pseudodifferential calculus according to 
 $$
\int_{\R^{2d}} \Tr_{\C^{N\times N}}(a(x,\xi)W^\eps_{\Psi}(x,\xi))dx\,d\xi =  \left( \op_\eps(a) \Psi, \Psi \right)_{L^2(\R^d,\C^N)}
$$
 where $\op_\eps(a)$ denotes the matrix-valued semi-classical pseudodifferential operator of symbol $a$.
The Wigner function satisfies the following bounds for every $a\in\cC_0^\infty(\R^{2d},\C^{N\times N})$ and $\eps>0$:
\begin{equation} \label{eq:cv}
\left| \left( \op_\eps(a) \Psi, \Psi \right)_{L^2(\R^d,\C^N)} \right| \leq C_d \|\Psi\|^2_{L^2(\R^d,\C^{N})}\|a\|_{\cC^{d+2}(\R^{2d},\C^{N\times N})};
\end{equation}
for $C_d>0$ depending only on $d$. If in addition $a\geq 0$ (meaning that $a$ takes values in the set of non-negative Hermitian matrices),
\begin{equation} \label{eq:garding}
\int_{\R^{2d}} \Tr_{\C^{N\times N}}(a(x,\xi)W^\eps_{\Psi}(x,\xi))dx\,d\xi\geq -C_a\eps\|\Psi\|^2_{L^2(\R^d,\C^{N})},
\end{equation}
for some $C_a>0$ that can be computed in terms of $a$ and its derivatives. 
Estimate \eqref{eq:cv} is a consequence of the Calderón-Vaillancourt theorem \eqref{eq:pseudo}, whereas \eqref{eq:garding} is a reformulation of Gårding's inequality \eqref{eq:gengard}.  A direct computation also shows that $W^\eps_\Psi$ is actually a lift of $\Psi\otimes\overline{\Psi}$:
\begin{equation}\label{eq:lift}
\int_{\R^d}W^\eps_\Psi(x,\xi)d\xi=\Psi\otimes\overline{\Psi}(x).
\end{equation}
Suppose now that $(\Psi^\eps)_{\eps>0}$  is a bounded sequence in $L^2(\R^d,{\C}^N)$; then \eqref{eq:cv} ensures that $(W^\eps_{\Psi^\eps})$ is a bounded sequence of distributions. In addition, \eqref{eq:garding} implies that all its accumulation points are non negative Radon matrix-valued measures, that is, measures valued on the set of complex $N\times N$ Hermitian positive-semidefinite matrices. Moreover, any measure $\mu$ obtained from $(W^\eps_{\Psi^\eps})$ along some subsequence $(\eps_\ell)$ satisfies (see \eqref{eq:husimi}, \eqref{eq:bddhusimi})
\[
\mu(\R^{2d})\leq\liminf_{\ell\to \infty}\|\Psi^{\eps_\ell}\|_{L^2(\R^d)}^2.
\]
These measures are called \textit{semi-classical} or \textit{Wigner measures} of the family~$(\Psi^\eps)_{\eps>0}$. 
\begin{remark}\label{rem:lowreg}
If $(W^{\eps_\ell}_{\Psi^{\eps_\ell}})$ converges in $\Sch'(\R^{2d})$ to the semiclassical measure $\mu$ then, for every $a\in C_0(\R^{d}_{x}\times\R^d_{\xi})$ that is $d+2$ times continuously differentiable in $x$ the following holds:
\[
\int_{\R^{2d}} \Tr_{\C^{N\times N}}(a(x,\xi)W^{\eps_\ell}_{\Psi^{\eps_\ell}}(x,\xi))dx\,d\xi\Tend{\ell}{\infty}\int_{\R^{2d}} \Tr_{\C^{N\times N}}(a(x,\xi)\mu(dx,d\xi)).
\] 
This follows from Remark \ref{rem:sep} and assertion (1) of Lemma \ref{lem:singulier}.
\end{remark}
Finally, the lift property \eqref{eq:lift} is transferred to an accumulation point $\mu$ generated from a subsequence~$(\Psi^{\eps_\ell})$:
\[
\forall \phi\in\cC_0(\R^d,\C^{N\times N}),\quad \lim_{\ell\to\infty}\int_{\R^d}\Tr_{\C^{N\times N}} (\phi(x) (\Psi^{\eps_\ell}\otimes\overline{\Psi^{\eps_\ell}})(x))dx=\int_{\R^d}\Tr_{\C^{N\times N}}(\phi(x)\mu(dx,d\xi)),
\] 
provided that no mass of $(\Psi^{\eps_\ell})$ is lost at infinity in Fourier space:
\begin{equation}\label{def:epsosc}
\limsup_{\ell\to\infty}\int_{|\eps_\ell\xi|>R}|\widehat{\Psi^{\eps_\ell}}(\xi)|^2 d\xi\Tend{R}{\infty} 0.
\end{equation}
This condition, referred sometimes to as $\eps$\textit{-oscillation}, is fulfilled as soon as the sequence $(\Psi^\eps)_{\eps>0}$ is bounded in $H^s_\eps(\R^d)^N$ for some $s>0$. 

\smallskip 

Let us conclude this concise review of semi-classical measures by recalling how the matrix-valued semi-classical measure $\mu=(\mu_{i,j})_{1\leq i,j,\leq N}$ is related to the semi-classical measures of the families of components $(\Psi^\eps_j)$, for $j=1,\hdots,N$. Suppose that the subsequence $(\Psi^{\eps_\ell})$ gives the semi-classical measure~$\mu$. Then, for every $1\leq i,j, \leq N$,
\begin{equation}\label{eq:matel}
\forall a\in\cC_0^\infty(\R^{2d}),\quad \lim_{\ell\to\infty}\left( \op_\eps(a) \Psi^{\eps_\ell}_i, \Psi^{\eps_\ell}_j \right)_{L^2(\R^d)}=\int_{\R^{2d}}a(x,\xi)\mu_{i,j}(dx,d\xi).
\end{equation}
Moreover, since $\mu$ takes values on the set of Hermitian positive-semidefinite matrices, one also has that the~$\mu_{i,i}$ are non-negative  (scalar) Radon measures and that $\mu_{i,j}$ is absolutely continuous with respect to both $\mu_{i,i}$ and $\mu_{j,j}$. The latter condition implies that $\mu_{i,j}=0$ as soon as $\mu_{i,i}$ and $\mu_{j,j}$ are mutually singular. In particular:
\begin{equation}\label{eq:diag0}
\mu_{i,i}\,\bot\,\mu_{j,j}\implies \forall a\in\cC_0^\infty(\R^{2d}),\quad \lim_{\ell\to\infty}\left( \op_\eps(a) \Psi^{\eps_\ell}_i, \Psi^{\eps_\ell}_j \right)_{L^2(\R^d)}=0.
\end{equation}

In this article, we are mainly interested in time-dependent versions of these objects. The modifications  required in order to adapt the theory to this context are rather straightforward. Suppose now that $(\Psi^\eps)_{\eps>0}$ is bounded in $L^\infty(\R_t; L^2(\R^d_x,\C^N))$. Define $W^\eps_{\Psi^\eps}$ as
\begin{equation}\label{def:VWigner}
W^\eps_{\Psi^\eps}(t,x,\xi):=W^\eps_{\Psi^\eps(t,\cdot)}(x,\xi)=\int_{\R^d} {\rm e}^{i\xi\cdot v}\Psi^\eps\left(t,x-\frac{\eps v}{2}\right)\otimes\overline{\Psi^\eps}\left(t,x+\frac{\eps v}{2}\right) \frac{dv}{(2\pi)^d}.
\end{equation}
Then \eqref{eq:cv} again implies that for every $\theta\in L^1(\R)$ and every $a\in{\cC}_0^\infty(\R^{2d},\C^{N\times N})$,
\begin{multline}\label{eq:Vwbdd}
\left|\int_\R\int_{\R^{2d}} \theta(t)  \Tr_{\C^{N\times N}}( a(x,\xi) W^\eps_{\Psi^\eps}(t,x,\xi))dx\,d\xi\,dt\right|\\
\leq C_d \|\Psi^\eps\|_{L^\infty(\R_t; L^2(\R^d_x,\C^N))}^2 \|\theta\|_{ L^1(\R)} \|a\|_{\cC^{d+2}(\R^{2d},\C^{N\times N})}.
\end{multline}
This ensures that $(W^\eps_{\Psi^\eps})$ is bounded in $\Sch'(\R\times \R^{2d})$. Moreover, any accumulation point $\mu$ of this family is a non negative Radon measure on $\R\times\R^{2d}$, because of \eqref{eq:garding}. It follows from \eqref{eq:Vwbdd} that the projection of~$\mu$ onto the $t$-variable  is absolutely continuous with respect to the Lebesgue measure on $\R$. Therefore, we conclude using the disintegration theorem the existence of a measurable map from $t\in\R$ to non negative, finite,  matrix-valued Radon measures $\mu^t$ on $\R^{2d}$ such that $\mu(dt,dx,d\xi)=\mu^t(dx,d\xi) dt$.

\smallskip 

Summing up, for every sequence $(\eps_\ell)_{\ell \in \N}$ going to $0$ as~$\ell$ goes to~$+\infty$  such that $(W^{\eps_\ell}_{\Psi^{\eps_\ell}})$ converges in the sense of distributions the following holds: for all $\theta\in L^1(\R)$ and $a\in{\cC}_0^\infty(\R^{2d},\C^{N\times N})$, 
\begin{equation}\label{def:Vconv}
\int_\R\int_{\R^{2d}} \theta(t) \Tr_{\C^{N\times N}}(a(x,\xi)W^{\eps_\ell}_{\Psi^{\eps_\ell}}(t,x,\xi))dx\,d\xi\, dt\Tend{\ell}{\infty} \int _\R \int_{\R^{2d}}\theta(t)   \Tr_{\C^{N\times N}}(a(x,\xi)\mu^t(dx,d\xi)) dt.
\end{equation}
If the sequence $(\Psi^{\eps_\ell}(t,\cdot))$ is in addition $\eps$-oscillating \eqref{def:epsosc} for almost every $t\in\R$, the projections of the measures $\mu^t$ on the $\xi$-variable are the limits of the energy densities: for every $\theta\in L^1(\R)$, $\phi\in\cC_0(\R^d,\C^{N\times N})$,
\begin{equation}\label{def:Venden}
\int_\R\int_{\R^d}  \theta(t) \Tr_{\C^{N\times N}} (\phi(x) (\Psi^{\eps_\ell}\otimes\overline{\Psi^{\eps_\ell}})(t,x))dx \Tend{\ell}{\infty} \int_\R\int_{\R^d} \theta(t) \Tr_{\C^{N\times N}}(\phi(x)\mu^t(dx,d\xi))\,dt.
\end{equation}

\begin{remark}\label{rem:otherprop}
Time-dependent analogues of \eqref{eq:matel}, \eqref{eq:diag0} also hold after replacing $\mu_{i,j}$ by $\mu^t_{i,j}$ and averaging in the $t$-variable. So does the analogue of Remark \ref{rem:lowreg}.
\end{remark} 

\subsection{The semi-classical measure of $(\psi^\eps)_{\eps>0}$ in terms of those of the sequences $(\psi^\eps_n)_{\eps>0}$, $n\in\N$} \label{subsec:psiepspsiepsn}

We now focus on the basic properties of semi-classical measures associated to a sequence~$(\psi^\eps)_{\eps>0}$ of solutions to \eqref{eq:schro}, issued from initial data~$(\psi^\eps_0)_{\eps>0}$ that are bounded in $H^s_\eps(\R^d)$ for some $s>d/2$, and onto clarifying how they are related to those of the families of projections $(\psi^\eps_n)_{\eps>0}$ defined in \eqref{eq:psiepsn}.

\smallskip 

First note that the highly oscillating character of the Schrödinger propagator prevents in general to be able to extract a subsequence along which $W^\eps_{\psi^\eps(t,\cdot)}$ will converge for every $t\in\R$. Following  \cite{MaciaAv,AM14,AFM15} we consider time averages of the Wigner functions we just described.  

Identity \eqref{def:Vconv} applied to this context states that whenever $(W^{\eps_\ell}_{\psi^{\eps_\ell}})$ converges in the sense of distributions for some sequence $(\eps_\ell)_{\ell \in \N}$ going to $0$ as~$\ell$ goes to~$+\infty$  the following holds: for all $\theta\in L^1(\R)$ and $a\in{\cC}_0^\infty(\R^{2d})$, 
\begin{equation}\label{def:varsigma}
\int_\R\int_{\R^{2d}} \theta(t) a(x,\xi)W^{\eps_\ell}_{\psi^{\eps_\ell}}(t,x,\xi)dx\,d\xi\, dt\Tend{\ell}{\infty} \int _\R \int_{\R^{2d}}  \theta(t) a(x,\xi)\varsigma^t(dx,d\xi) dt,
\end{equation}
where, for a.e. $t\in\R$,  $\varsigma^t$ is a non negative Radon measure on $\R^{2d}$. 

Besides, $\varsigma^t$  can be related to the weak limits of the energy densities since the family $(\psi^\eps)_{\eps>0}$ is $\eps$-oscillating. 

\begin{remark}\label{rem:psiepsosc}
If $(\psi^\eps_0)_{\eps>0}$ is bounded in $H^s_\eps(\R^d)$ for some $s>d/2$ then $(\psi^\eps(t,\cdot))_{\eps>0}$, the corresponding family of solutions to \eqref{eq:schro},   is $\eps$-oscillating for every $t\in\R$. This follows from \cite[Lemma~6.2]{CFM2} applied to the family $(U^\eps)_{\eps>0}$ of solutions to \eqref{eq:U} once one notices that for  $r\in({d\over 2},s)$, $R>0$ and $t\in\R$,
\[
\int_{|\eps\xi|>R}\|\widehat{U^\eps}(t,\xi,\cdot)\|_{H^r(\T^d )}^2d\xi\leq R^{-2(s-r)}(\|\psi^\eps_0\|_{H^s_\eps(\R^d)}+C_s\eps|t|)^2,
\] 
as follows from estimate~(\ref{eq:wpxy}).
\end{remark}
As a consequence of this, \eqref{def:Venden} implies one  has for the subsequence $(\eps_\ell)$ of~(\ref{def:varsigma}) and for $\theta\in  {\mathcal C}_0^\infty(\R)$, $\phi\in{\mathcal C}_0^\infty(\R^d)$, 
\begin{equation}\label{eq:projmut}
\int_\R\int_{\R^d}\theta(t) \phi(x)|\psi ^{\eps_\ell}(t,x)|^2 dx\, dt \Tend{\ell}{+\infty} \int_\R\int_{\R^{2d}} \theta(t)  \phi(x)  \varsigma^t( dx,d\xi)dt.
\end{equation}
For $n,n'\in\N^*$, we use the notation $W^\eps_{n,n'}$ to refer to the Wigner function of the pair $\psi^\eps_n,\psi^\eps_{n'}$. In other words, for every $a\in{\mathcal C}_0^\infty(\R^{2d})$,
$$
\int_{\R^{2d}} a(x,\xi) W^\eps_{n,n'}(t,x,\xi)dx\,d\xi  =  \left( \op_\eps(a) \psi^\eps_n (t,\cdot) , \psi^\eps_{n'}(t,\cdot) \right)_{L^2(\R^d)}.
$$
The same argument presented before shows that for $n,n'\in\N$, there exists a sequence $(\eps_\ell)_{\ell \in \N}$ going to $0$ as~$\ell$ goes to~$+\infty$  such that, for all $\theta\in L^1(\R)$, $a\in{\mathcal C}_0^\infty(\R^{2d})$,
\begin{equation}\label{def:mu}
\int_\R\int_{\R^{2d}} \theta(t) a(x,\xi) W^\eps_{n,n'}(t,x,\xi)dx\,d\xi\, dt\Tend{\ell}{\infty} \int _\R \int_{\R^{2d}}  \theta(t) a(x,\xi)\mu^t_{n,n'} (dx,d\xi)) dt,
\end{equation}
where, for a.e. $t\in\R$, $\mu^t_{n,n'}$ is a (signed) Radon measure on $\R^{2d}$.

\begin{proposition} \label{prop:sum}
There exist a subsequence $(\eps_\ell)_{\ell \in \N}$ going to $0$ as~$\ell$ goes to~$+\infty$  such that \eqref{def:varsigma} and~\eqref{def:mu} hold simultaneously for all $n,n'\in\N^*$. In addition, for a.e. $t\in\R$,
\[
\varsigma^t= \sum_{ n,n'\in\N^*} \mu^t_{n,n'},
\]
the convergence of the series being understood in the weak-$\ast$ topology of the space of Radon measures on $\R^{2d}$.
\end{proposition}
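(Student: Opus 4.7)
The plan is to use a Cantor diagonal extraction together with Lemma~\ref{lem:proj} in order to pass to the limit term by term in a bilinear decomposition of $W^\eps_{\psi^\eps}$. First, since $P^\eps_{\varphi_n}$ is an orthogonal projection on $L^2(\R^d\times\T^d)$ that commutes with $\langle\eps D_x\rangle^s$ and with $P(\eps D_x)^{s/2}$, the estimate~\eqref{def:Leps} combined with Remark~\ref{rem:eqnorm} and~\eqref{eq:wpxy} yields $\|\psi^\eps_n(t,\cdot)\|_{L^2(\R^d)}\le C$, uniformly in $n\in\N^*$ on bounded time intervals. Consequently each family $(W^\eps_{n,n'})$ is uniformly bounded in $\Sch'(\R\times\R^{2d})$ by~\eqref{eq:cv}, and because $\N^*\times\N^*$ is countable, Banach--Alaoglu together with a standard diagonal argument produce a single subsequence $(\eps_\ell)$ along which~\eqref{def:varsigma} and~\eqref{def:mu} hold simultaneously for every pair $(n,n')$.

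For $N\in\N^*$, decompose $\psi^\eps=\Psi^\eps_N+R^\eps_N$ with $\Psi^\eps_N:=\sum_{n\le N}\psi^\eps_n$ and $R^\eps_N:=\sum_{n>N}\psi^\eps_n$, the latter series converging in $L^2(\R^d)$ by~\eqref{eq:sum}. The bilinearity of the Wigner transform gives
\[
W^\eps_{\psi^\eps}=\sum_{n,n'\le N}W^\eps_{n,n'}+W^\eps_{\Psi^\eps_N,R^\eps_N}+W^\eps_{R^\eps_N,\Psi^\eps_N}+W^\eps_{R^\eps_N}.
\]
For $\theta\in L^1(\R)$ and $a\in\cC_0^\infty(\R^{2d})$, each of the last three pieces is controlled by~\eqref{eq:cv} and Cauchy--Schwarz in $t$: using $\|\Psi^\eps_N(t,\cdot)\|_{L^2}\le 2\|\psi^\eps_0\|_{L^2}$,
\[
\Bigl|\int_\R\theta(t)\,\<\op_\eps(a)\Psi^\eps_N(t,\cdot),R^\eps_N(t,\cdot)\>_{L^2}dt\Bigr|\le C_a\|\psi^\eps_0\|_{L^2}\|\theta\|_{L^1}^{1/2}\Bigl(\int_\R|\theta(t)|\,\|R^\eps_N(t,\cdot)\|^2_{L^2}dt\Bigr)^{1/2},
\]
and analogous estimates hold for the other two pieces. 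The $\eps$-uniform dominating function $C^2|\theta|$ furnished by $L^2$-conservation, combined with Lemma~\ref{lem:proj} through the reverse Fatou lemma and followed by dominated convergence in $t$, then yields
\[
\limsup_{\ell\to\infty}\int_\R|\theta(t)|\,\|R^{\eps_\ell}_N(t,\cdot)\|^2_{L^2}\,dt\Tend{N}{\infty}0.
\]

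Passing to the limit along $(\eps_\ell)$ in the finite sum via~\eqref{def:mu} and combining with~\eqref{def:varsigma}, one obtains
\[
\int_\R\!\int_{\R^{2d}}\theta(t)a(x,\xi)\Bigl(\varsigma^t-\sum_{n,n'\le N}\mu^t_{n,n'}\Bigr)(dx,d\xi)\,dt\Tend{N}{\infty}0
\]
for every $\theta\otimes a$ as above. This expresses the weak-$\ast$ convergence of $\sum_{n,n'\le N}\mu^t_{n,n'}\,dt$ to $\varsigma^t\,dt$ as Radon measures on $\R\times\R^{2d}$; a standard disintegration-in-$t$ argument then gives the identity $\varsigma^t=\sum_{n,n'\in\N^*}\mu^t_{n,n'}$ in the weak-$\ast$ topology of Radon measures on $\R^{2d}$ for almost every $t\in\R$.

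The main technical obstacle is the interchange of $\limsup_{\ell\to\infty}$, the $t$-integral, and the $N\to\infty$ limit in the remainder estimates: Lemma~\ref{lem:proj} only provides pointwise-in-$t$ tail control and no uniformity in $t\in\R$ should be expected, so one crucially needs the $\eps$-uniform dominating bound $\|R^\eps_N(t,\cdot)\|_{L^2}\le\|\psi^\eps_0\|_{L^2}$ coming from conservation of the $L^2$-norm by the Schr\"odinger flow to run reverse Fatou. The remaining ingredients---Calder\'on--Vaillancourt, Cauchy--Schwarz in $t$, and the Cantor diagonal extraction---are routine.
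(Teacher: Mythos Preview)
Your argument is correct and follows essentially the same route as the paper: diagonal extraction over the countable family of Wigner functions, bilinear splitting $\psi^\eps=\Psi^\eps_N+R^\eps_N$, and control of the remainder via Lemma~\ref{lem:proj} together with a dominated-convergence argument in~$t$. The paper organizes the extraction slightly differently, by packaging $(\psi^\eps_1,\dots,\psi^\eps_N)$ into a $\C^N$-valued function and using the matrix-valued Wigner measure, which gives consistency of the $\mu^t_{n,n'}$ across~$N$ for free; your pair-by-pair extraction achieves the same end.

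One small imprecision: the dominating bound $\|R^\eps_N(t,\cdot)\|_{L^2}\le\|\psi^\eps_0\|_{L^2}$ does not follow from $L^2$-conservation for the Schr\"odinger flow, since $R^\eps_N$ itself is not a solution of~\eqref{eq:schro}. The correct uniform-in-$N$ bound (on bounded time intervals) comes instead from writing $R^\eps_N=L^\eps\sum_{n>N}P^\eps_{\varphi_n}U^\eps$, using that $\sum_{n>N}P^\eps_{\varphi_n}$ is an orthogonal projection commuting with $P(\eps D_x)^{s/2}$ and $\langle\eps D_x\rangle^s$, and then invoking~\eqref{def:Leps} and~\eqref{eq:wpxy}. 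With this correction the reverse-Fatou step goes through as you describe.
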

\begin{proof}
We proceed to a first extraction to have~\eqref{def:varsigma} and  we keep denoting by $\eps$ the resulting subsequence. We
put
\[
\Psi^\eps_N:=(\psi^\eps_1,\hdots,\psi^\eps_N)\in \cC(\R_t; L^2(\R^d_x,\C^N)).
\]
We know that $(W^\eps_{\Psi^\eps_N})$, defined by \eqref{def:VWigner}, are uniformly bounded in $\mathcal C(\R_t;\Sch'(\R^{2d},\C^{N\times N}))$, both in $\eps>0$ and $N\in\N^*$. 

By \eqref{def:Vconv}, any accumulation point of $(W^\eps_{\Psi^\eps_N})$ obtained along some subsequence $(\eps_\ell)_{\ell \in \N}$ is a time-dependent family of non negative matrix-valued Radon measures $\mu_N^t$. 
By diagonal extraction, we can find a sequence $(\eps_\ell)_{\ell \in \N}$ such that $(W^{\eps_\ell}_{\Psi^{\eps_\ell}_N})_{\eps>0}$ converge for every $N\in\N^*$. We denote by $(\mu^t_N)_{N\in\N^*}$ their respective limits. By \eqref{eq:matel} we know that, for every $n,n'\leq N\leq N'$ one has:
\[
(\mu^t_N)_{n,n'}=(\mu^t_{N'})_{n,n'}=\mu^t_{n,n'},
\]
where $\mu^t_{n,n'}$ is obtained through \eqref{def:mu}. This shows that we can find a sequence $(\eps_\ell)_{\ell \in \N}$ as claimed.

Define now $\psi^{N,\eps}:=\sum_{n=1}^N\psi^\eps_n$. One has that for $a\in\cC_0^\infty(\R^{2d})$ and $t\in\R$,
\[
\int_{\R^{2d}}  a(x,\xi)W^{\eps_\ell}_{\psi^{N,\eps_\ell}}(t,x,\xi)dx\,d\xi
=\int_{\R^{2d}}a(x,\xi)  \Tr_{\C^{N\times N}}\left( Q\, W^{\eps_\ell}_{\Psi^{\eps_\ell}_N}(t,x,\xi) \right)dx\,d\xi,
\]
where $Q$ is the $N\times N$ matrix whose all entries are equal to one. Therefore, $(W^{\eps_\ell}_{\psi^{N,\eps_\ell}})_{\ell \in \N}$ converges to the semi-classical measure given, for a.e. $t\in\R$, by
\[
\varsigma^t_N=\sum_{1\leq n,n'\leq N}\mu^t_{n,n'}.
\]
Finally, \eqref{eq:sum} and Lemma \ref{lem:proj} imply that for every $\theta\in L^1(\R)$,
\[
\limsup_{\ell\to\infty} \int_\R\theta(t) \|\psi^{\eps_\ell}(t,\cdot)-\psi^{N,\eps_\ell}(t,\cdot)\|_{L^2(\R^d)}^2dt\Tend{N}{\infty} 0;
\]
which in turn guarantees that, for every $\theta\in L^1(\R), a\in\cC_0^\infty(\R^{2d})$,
\[
\int_{\R}\int_{\R^{2d}} \theta(t)a(x,\xi)\varsigma^t_N(dx,d\xi)dt\Tend{N}{\infty}  \int_{\R}\int_{\R^{2d}} \theta(t)a(x,\xi)\varsigma^t(dx,d\xi)dt.
\]
\end{proof}

The rest of the article is devoted to computing the measures $\mu^t_{n,n'}$.

\subsection{Localization of semi-classical measures}\label{subsec:wigner}

If the operator $L^\eps P^\eps_{\varphi_n}$ is applied to problem \eqref{eq:U}, one deduces that $\psi^\eps_n$ (which is defined by \eqref{eq:psiepsn}) satisfies the pseudo-differential equation
\begin{equation}\label{eq:mode_n}
\left\{\begin{array}{l}
i\eps^2 \partial_t \psi_n^\eps(t,x)) =  \varrho_n(\eps D_x) \psi_n^\eps (t,x)+ \eps^2 f^\eps_n(t,x),\quad (t,x)\in\R\times\R^d,\vspace{0.2cm}\\
\psi_n^\eps(0,x)=\varphi_n\left({x\over\eps},\eps D_x\right) \int_{\T^d} \overline{\varphi_n}(y,\eps D_x) \psi^\eps_0(x)dy,
\end{array}\right.
\end{equation} 
with 
$$f^\eps_n(t,x):= \varphi_n\left({x\over\eps},\eps D_x\right) \int_{\T^d} \overline{\varphi_n}(y,\eps D_x) (V_{\rm ext} (t,x) U^\eps(t,x,y))dy.$$

This fact will be used to obtain all the information on the measures $\mu^t_{n,n'}$ defined in \eqref{def:mu} that is relevant to our purposes. In this section we gather some basic facts; in following sections we will introduce a more precise machinery that will allow us to obtain a complete picture.

\begin{proposition}\label{prop:easysupp}
Let $(\psi^\eps_0)$ be bounded in $H^s_\eps(\R^d)$ for some $s>d/2$. For any $n,n'\in\N^*$, let $(\psi^\eps_n)_{\eps>0}$ and $(\psi^\eps_{n'})$ be defined by~ \eqref{eq:psiepsn} and let $\mu^t_{n,n'}$ be given by \eqref{def:mu}. Let $\Omega\subseteq\R^d$ be open and invariant by translations by $2\pi\Z^d$. Then the following hold.
\begin{enumerate}
\item If $\nabla_\xi\varrho_n\in{\rm Lip}(\R^d)$ on $\Omega$ and $\nabla_\xi \varrho_n|_\Omega\not=0$, then
\[
\mu_{n,n} ^t(\R^d\times\Omega) =0,\quad \text{ for a.e. }t\in\R.
\]
\item Let $\delta>0$ and suppose that
$$\Omega\subset  \{\xi\in\R^d\,:\,  |\varrho_n(\xi)-\varrho_{n'}(\xi)|\geq \delta\}.$$
Then $\displaystyle{|\mu_{n,n'} ^t|(\R^d\times\Omega) =0,\quad \text{ for a.e. }t\in\R.}$
\end{enumerate}
\end{proposition}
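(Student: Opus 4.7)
The plan is to derive both assertions from the pseudo-differential equation~\eqref{eq:mode_n} governing $\psi^\eps_n$, exploiting the fact that the measures $\mu^t_{n,n'}$ are only defined via time averaging against $\theta\in L^1(\R)$; this licenses an integration by parts in $t$ that absorbs the factor $\eps^{-2}$ multiplying $\varrho_n(\eps D_x)$ in~\eqref{eq:mode_n}. In both parts, the source term $f^\eps_n$ is uniformly bounded in $L^2(\R^d)$---since $V_{\rm ext}$ is bounded, $U^\eps$ is uniformly bounded in $L^2(\R^d\times\T^d)$ by~\eqref{eq:wpxy}, and the projectors~\eqref{def:Pphi} act boundedly on $L^2$---so any quantity of size $\eps^2\|f^\eps_n\|_{L^2}$ is $O(\eps^2)$ and ultimately negligible.

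For assertion~(1), I would fix $a\in\cC_0^\infty(\R^d\times\Omega)$ and look for $b$, compactly supported in $\R^d\times\Omega$, solving the transport equation $\nabla_\xi\varrho_n(\xi)\cdot\nabla_x b(x,\xi)=a(x,\xi)$. Such a $b$ can be built by integrating $a$ along the Hamiltonian flow of $\varrho_n$:
\[
b(x,\xi):=-\int_0^T a\bigl(x+s\nabla_\xi\varrho_n(\xi),\xi\bigr)\,ds,
\]
for $T$ large enough that the straight lines $s\mapsto x+s\nabla_\xi\varrho_n(\xi)$ exit $\supp a$ for $|s|\ge T$; this is possible because $|\nabla_\xi\varrho_n|$ is bounded below on the compact $\xi$-projection of $\supp a$. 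The Lipschitz hypothesis on $\nabla\varrho_n$ secures enough regularity of $b$ for the symbolic calculus of Appendix~\ref{app:pseudo} to yield $\op_\eps(a)=\tfrac{1}{i\eps}[\op_\eps(b),\varrho_n(\eps D_x)]+\eps R^\eps_a$ with $R^\eps_a$ uniformly bounded on $L^2(\R^d)$. Substituting $\varrho_n(\eps D_x)\psi^\eps_n=i\eps^2\partial_t\psi^\eps_n-\eps^2 f^\eps_n$ from~\eqref{eq:mode_n} turns the commutator into a total $t$-derivative,
\[
\bigl\langle\op_\eps(a)\psi^\eps_n,\psi^\eps_n\bigr\rangle=\eps\,\tfrac{d}{dt}\bigl\langle\op_\eps(b)\psi^\eps_n,\psi^\eps_n\bigr\rangle+O(\eps),
\]
so integration against any $\theta\in\cC^1_0(\R)$, followed by one integration by parts, gives an $O(\eps)$ quantity. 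Passing to the limit in~\eqref{def:mu} yields $\int_\R\int_{\R^{2d}}\theta(t)a(x,\xi)\mu^t_{n,n}(dx,d\xi)\,dt=0$ for every admissible $a$ and $\theta$, which by density forces $\mu^t_{n,n}(\R^d\times\Omega)=0$ for a.e. $t$.

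For assertion~(2), the argument is analogous but exploits the spectral gap $|\varrho_n-\varrho_{n'}|\ge\delta$: I set $c(x,\xi):=a(x,\xi)/(\varrho_n(\xi)-\varrho_{n'}(\xi))$, a bounded symbol supported in $\R^d\times\Omega$. Writing $a=c(\varrho_n-\varrho_{n'})$ and expanding via symbolic calculus gives $\op_\eps(a)=\op_\eps(c)\varrho_n(\eps D_x)-\varrho_{n'}(\eps D_x)\op_\eps(c)+\eps R^\eps$, and substituting~\eqref{eq:mode_n} for both modes produces
\[
\bigl\langle\op_\eps(a)\psi^\eps_n,\psi^\eps_{n'}\bigr\rangle=i\eps^2\,\tfrac{d}{dt}\bigl\langle\op_\eps(c)\psi^\eps_n,\psi^\eps_{n'}\bigr\rangle+O(\eps).
\]
One more integration by parts in $t$ against $\theta$ concludes, yielding $\mu^t_{n,n'}(\R^d\times\Omega)=0$ for a.e. $t$, and hence the same conclusion for the total variation $|\mu^t_{n,n'}|$.

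The main obstacle to anticipate is the limited regularity of the band functions at their crossings: both the construction of $b$ in~(1) and the commutator expansions in~(1) and~(2) require enough smoothness of $\varrho_n$ and $\varrho_{n'}$ to control the sub-principal remainders. In~(1), this is granted by the explicit Lipschitz assumption on $\nabla\varrho_n$ over $\Omega$; in~(2), although $\varrho_n$ and $\varrho_{n'}$ may individually fail to be smooth at crossings with other bands lying inside $\Omega$, the separation $|\varrho_n-\varrho_{n'}|\ge\delta$ keeps $c$ bounded, and the low-regularity symbolic calculus of Appendix~\ref{app:pseudo} absorbs the loss of smoothness into the $\eps R^\eps$ remainder, which is all that is needed.
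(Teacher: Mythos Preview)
Your argument is correct, and for Part~(2) it is essentially the paper's proof run in the opposite order: the paper first passes to the limit with smooth $a$ to obtain $\int(\varrho_{n'}-\varrho_n)a\,d\mu^t_{n,n'}=0$, extends this identity by density to $a\in\cC_0(\R^d\times\Omega)$, and only then substitutes $\tilde a=a/(\varrho_n-\varrho_{n'})$; you divide at the symbol level first and then take the limit. Both routes rest on the same low-regularity product rule (Lemma~\ref{lem:singulier}(2)) and are equivalent.

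For Part~(1) there is a genuine, if minor, difference. The paper does \emph{not} solve the transport equation $\nabla_\xi\varrho_n\cdot\nabla_x b=a$; instead it uses Lemma~\ref{lem:singulier}(3) directly with a generic test symbol $a$ to obtain the invariance relation $\int\nabla_\xi\varrho_n\cdot\nabla_x a\,d\mu^t_{n,n}=0$, and then argues that a \emph{positive finite} measure invariant under the nontrivial translation flow $(x,\xi)\mapsto(x+s\nabla\varrho_n(\xi),\xi)$ must vanish. Your construction of $b$ by integrating $a$ along the flow is more direct and has the advantage of not invoking positivity at all (so it would equally show that any signed $\mu^t_{n,n}$ vanishes on $\R^d\times\Omega$). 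The paper's route, on the other hand, avoids having to check that the explicitly constructed $b$ lies in the admissible class~$\cA$---a point which in your argument is harmless, since $b$ is smooth and compactly supported in $x$ and merely continuous in $\xi$, which is exactly what Remark~\ref{rem:sep} allows.
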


\begin{proof} Point~1 is proved in an analogous manner than Proposition~3.4 in~\cite{CFM2}. Using the calculus of semi-classical pseudo-differential operators with low regularity of Lemma \ref{lem:singulier} it is possible to prove that for every $\theta\in{\mathcal C}_0^\infty (\R)$ and $a\in\cC^\infty_0 (\R^d\times\Omega)$,
$$ \int_\R\theta(t) ({\rm op}_\eps(\nabla_\xi\varrho_n \cdot \nabla_x a) \psi^\eps_n(t,\cdot) ,\psi^\eps_{n}(t,\cdot))_{L^2(\R^d)}dt \Tend{\eps}{0} 0.$$
By \eqref{def:mu}, this implies that, for almost every $t\in\R$, 
\[
\int_{\R^{d}\times\Omega}\nabla_\xi\varrho_n (\xi)\cdot \nabla_x a(x,\xi)\mu_{n,n}^t(dx,d\xi)=0.
\]
This implies that the measure $\mu_{n,n}^t {\bf 1 }_{\R^d\times \Omega}$ is invariant by the flow $(x,\xi) \mapsto (x+s\nabla\varrho_n(\xi), \xi)$. Since $\mu_{n,n}^t $ is non negative and finite, necessarily it is identically~$0$. 

\smallskip

For proving Point~2, it is enough to obtain,  for every  $\theta\in{\mathcal C}_0^\infty (\R)$ and
$a\in{\mathcal C}_0^\infty (\R^{d}\times\Omega)$:
$$\int_\R \theta(t) \left({\rm op}_\eps(a) \psi^\eps_n(t,\cdot),\psi^\eps_{n'}(t,\cdot)\right)_{L^2(\R^d)} dt\Tend{\eps}{0}0.$$ 
We have 
\begin{multline}\label{eq:derpsi}
i\eps^2 \frac d{dt}  \left({\rm op}_\eps(a) \psi^\eps_n(t,\cdot),\psi^\eps_{n'}(t,\cdot)\right)_{L^2(\R^d)} \\ =
 \left( \left(\varrho_{n'}(\eps D_x) {\rm op}_\eps(a) - {\rm op}_\eps(a)\varrho_n(\eps D_x)\right) \psi^\eps_n(t,\cdot),\psi^\eps_{n'}(t,\cdot)\right)_{L^2(\R^d)}+\eps^2 R^\eps(t),
\end{multline}
where $ |R^\eps(t)|\leq C\|f^\eps_n(t,\cdot)\|_{L^2(\R^d)}^2$ is locally uniformly bounded in $t\in\R$ for every $\eps>0$. 

By Lemma~\ref{lem:singulier} (2), the following holds with respect to  the  ${\mathcal L}(L^2(\R^d))$ norm:
$$\varrho_{n'}(\eps D_x) {\rm op}_\eps(a) - {\rm op}_\eps(a)\varrho_n(\eps D_x)= 
{\rm op}_\eps\left( (\varrho_{n'}-\varrho_n) a\right) +O(\eps ).$$
This identity together with integration by parts transforms \eqref{eq:derpsi} into
\begin{multline*}
\int_\R \theta(t) \left({\rm op}_\eps\left((\varrho_{n'}-\varrho_n) a\right) \psi^\eps_n(t,\cdot),\psi^\eps_{n'}(t,\cdot)\right)_{L^2(\R^d)} dt 
  \\=  \frac{\eps^2}{i} \int _\R \theta'(t) \left({\rm op}_\eps(a) \psi^\eps_n(t,\cdot),\psi^\eps_{n'}(t,\cdot)\right)_{L^2(\R^d)} dt + O(\eps).
\end{multline*} 
Taking limits $\eps \rightarrow 0$, which is possible by Remarks \ref{rem:lowreg} and \ref{rem:otherprop}, we obtain 
\[
 \int_\R\int_{\R^{2d}}\theta(t)(\varrho_{n'}(\xi)-\varrho_n(\xi)) a(x,\xi)\mu_{n,n'} ^t(dx,d\xi) dt=0.
\] 
By density, this relation holds for all  $a\in{\mathcal C}_0 (\R^{d}\times\Omega)$, in particular for $\tilde{a} = (\varrho_n-\varrho_{n'})^{-1} a$. This shows that, as we wanted to prove
\[
\forall \theta\in\cC_0^\infty (\R),\; \forall a\in{\mathcal C}_0 (\R^{d}\times\Omega),\quad \int_\R\int_{\R^{2d}}\theta(t) a(x,\xi)\mu_{n,n'} ^t(dx,d\xi) dt=0.
\]
\end{proof}

Proposition~\ref{prop:easysupp} shows that~$\mu^t_{n,n}$ can only charge the sets~$\Lambda_n$ of critical points of~$\varrho_n$ or the set where~$\varrho_n$ has a conical crossing with another Bloch energy (\textit{i.e.} where~$\varrho_n$ ceases to be~$\cC^{1,1}(\R^d)$). It also shows that~$\Sigma_{n,n'}$ is the only region where the measures~$\mu^t_{n,n'}$ can be non-zero. Since, assuming {\bf H1}, the crossing sets reduce to $\cup_{n\in\N_0} \Sigma_n$, the analysis of the measures $\mu^t_{n,n'}$ will be performed in the following sections by means of a second microlocalisation above the sets $\Lambda_n$ and $\Sigma_n$, under the assumption~{\bf H1}, {\bf H2} and {\bf H3}.


\section{Two microlocal analysis}\label{section:twomic}

The analysis of the concentration of a family on a submanifold of the phase space turned out to be an important element of the analysis of its behavior. Two-microlocal semi-classical measures gives a quantitative overview on these concentration phenomena. They were first introduced simultaneously and independently in~\cite{NierScat} and by one of the author in her thesis (see the articles~\cite{FermanianShocks,Fermanian_note1,Fermanian_Note2}),  and  they have found applications in different fields, as for example~\cite{FG02,MaciaTorus}  and articles connected to these ones. We recall in Section~\ref{sec:twomic}  the definition of two-microlocal semi-classical measures that is useful in our context and  apply the theory to families $(\psi^\eps)_{\eps>0}$ of solution to~\eqref{eq:schro} in Section~\ref{sec:twomic} in the frameworks of Theorem~\ref{theorem0} and of Theorem~\ref{theo:interaction}.  This leads to the statement of two results (Theorems~\ref{theo:2mic1} and~\ref{theo:2mic2}) that will be proved in the next Sections~\ref{sec:proof1} and~\ref{sec:proof2}.

\subsection{Two-scale semi-classical  measures}\label{sec:twomic}

We study here the concentration of a bounded family~$(\Psi^\eps)_{\eps>0}$ of $L^\infty(\R, L^2( \R^d,\C^N))$ on a set~$\R^d\times X$ 
where $X$ is assumed to be a connected, closed embedded submanifold of $(\R^d)^*$ of codimension $p$. Following~\cite{CFM2}, we achieve  a second microlocalization above~$\R^d\times X$ and we crucially use that the geometric properties of $X$ imply that there exists a tubular neighbourhood~$U$ of $\{(\sigma,0)\,:\,\sigma\in X\}\subseteq N X$ such that the tubular coordinate map:
$$
U\ni (\sigma,v)\longmapsto \sigma+v\in(\R^d)^*;
$$
is a diffeomorphism onto its image $V$. In that case, there exists a smooth map $\sigma_X:V\longrightarrow X$ such that, for every $\xi\in V$, 
\begin{equation}\label{eq:x}
\begin{array}{ccl}
\xi=\sigma+v,\quad (\sigma,v)=(\sigma_X(\xi),\xi-\sigma_X(\xi))\in U.
\end{array}
\end{equation}
We extend the phase space $T^*\R^d:=\R^d_x\times(\R^d)^*_\xi$ with a new variable $\eta\in \overline{ \R^d}$, where~$\overline{\R^d}$ is the compactification of $(\R^d)^*$ obtained by adding a sphere ${\bf S}^{d-1}$ at infinity. The space 
$\mathcal{A}^{(2)}$
 of test functions associated with this extended phase space is formed by those functions
$$a\in{\mathcal C}^\infty(T^*\R^d_{x,\xi}\times\R^d_\eta,\C^{N\times N})$$ which satisfy the two following properties:
\begin{enumerate}
\item there exists a compact $K \subset T^*\R^d$ such that, for all $\eta\in\R^d$, the map $(x,\xi)\longmapsto a(x,\xi,\eta)$ is a smooth matrix-valued function compactly supported in $K$;
\item there exists a smooth matrix-valued function $a_\infty$ defined on $T^*\R^d\times{\bf S}^{d-1}$ and $R_0>0$ such that, if $|\eta|>R_0$, then $a(x,\xi,\eta)=a_\infty(x,\xi,\eta/|\eta|)$.  
\end{enumerate}
For $a\in  \mathcal{A}^{(2)}$ supported in $\R^d\times V\times \R^d$, we write:
$$a_\eps(x,\xi):=a\left(x,\xi,\frac{\xi-\sigma_X(\xi)}{\eps}\right).$$

\smallskip

We associate to $\Psi^\eps(t)$   a  two-microlocal Wigner distribution 
$$W^{X,\eps}(t)\in\mathcal{D}'(\R^d\times V\times \overline{\R^d}), \;\;W^{X,\eps}_{\Psi^\eps}(t)=(W_{j,k}^{X,\eps})_{1\leq j,k\leq N};$$ its action on test functions $a\in{\mathcal A}^{(2)}$ supported in $\R^d\times V\times \R^d$
 is defined by: 
\begin{equation}
\left\langle W^{X,\eps}_{\Psi^\eps}(t),a\right\rangle :=\left(\op_\eps(a_\eps) \Psi^\eps(t),\,\Psi^\eps(t)\right)_{L^2(\R^d,\C^N)}.
\end{equation}
Since the family of operators $(\op_\eps(a_\eps))_{\eps>0}$ is uniformly bounded in $L^2(\R^d,\C^{N\times N})$ (as a consequence of the Calderón-Vaillancourt theorem, see Appendix~\ref{app:pseudo}), it follows that $(W^{X,\eps}_{\Psi^\eps}(t))$ is a bounded sequence of distributions. In addition, any  smooth, compactly supported test function $a\in {\mathcal C}^\infty_0(\R^d\times V,\C^{N\times N})$ can be naturally identified to an element of ${\mathcal A}^{(2)}$ which does not depend on the last variable. For such $a$, one clearly has 
$$\left\langle W^{X,\eps}_{\Psi^\eps}(t) ,a\right\rangle= \left\langle W^{\eps}_{\Psi^\eps}(t) ,a\right\rangle;$$
hence $W^{X,\eps}_{\Psi^\eps}(t)$ is a lift of $W^{\eps}_{\psi^\eps}(t)$ to the extended phase-space.  We thus focus on the asymptotic description of the quantities
\begin{equation} \label{wignertwo}
\int_\R \theta(t) \langle W^{X,\eps}_{\Psi^\eps}(t),a\rangle dt,\;\;\theta\in L^1(\R),\;\;a\in{\mathcal A}^{(2)}.
\end{equation}

\smallskip

In order to describe the limits of these quantities, we must introduce some notations. We consider an open subset $W$ of~$V$ where there exists  $\varphi: W \longrightarrow \R^p$ a smooth function such that the $\xi\in W$ for which $\varphi(\xi)=0$ are precisely those which are in $W\cap X$. We also assume  that $d\varphi(\sigma)$ for $\sigma\in W\cap X$ is of maximal rank. These coordinates functions give parametrization of the manifolds under consideration and  for every $\sigma\in  W\cap X$, we can write 
$$N_\sigma X= \{ \, ^td\varphi(\sigma) z\;:\; z\in\R^p\}.$$
This parametrization allows to define a measure on $N_\sigma X$ and  the space $L^2(N_\sigma X,\C^2)$. Different $\varphi$ will give equivalent norms. 
The function $\varphi$ also induces a smooth map $B$ from the neighbourhood~$W$ of~$\sigma$ into the set of $d\times p$ matrices such that 
\begin{equation}\label{def:B}
\xi-\sigma_X(\xi)=B(\xi)\varphi(\xi),\;\;\xi\in W.
\end{equation}
Therefore, given a function $a\in {\mathcal C}^\infty_0(\R^d\times W\times \R^d,\C^{N\times N})$ and a point $(\sigma,v)\in T^*_\sigma X$, we can use $\varphi$ to define an operator acting on $f\in L^2(N_\sigma X,\C^N)$ by:
$$Q_a^\varphi (\sigma,v)f(z)=\int_{\R^p\times \R^p}a\left(v+ \, ^td\varphi(\sigma) \frac{z+y}{2},\sigma,  B(\sigma) \eta\right)f(y){\rm e}^{i\eta\cdot (z-y)}\frac{d\eta\,dy}{(2\pi)^p}.$$
In other words, $Q_a^\varphi(\sigma,v)$ is obtained from $a$ by applying the non-semi-classical Weyl quantization to the symbol $a\left(v+ \, ^td\varphi(\sigma) \, \cdot\,,\sigma,  B(\sigma) \, \cdot\,\right)\in {\mathcal C}^\infty_0(\R^p\times\R^p,\C^{N\times N})$,
\begin{equation}\label{eq:Qa}
Q_a^\varphi (\sigma,v)= a^W\left(v+ \, ^td\varphi(\sigma) z,\sigma,  B(\sigma) D_z\right).
\end{equation}
Using invariance properties with respect to changes of coordinate systems that are precisely described in~\cite{CFM2}, Section~4, one can conclude that $a$ induces an operator $Q_a^X$ on $L^2(N_\sigma X,\C^N)$.
Clearly, $Q_a^X(\sigma,v)$ is smooth and compactly supported in $(\sigma,v)$; moreover, $Q_a^X(\sigma,v)$ is a compact operator on $L^2(N_\sigma X,\C^N)$
for every $(\sigma,v)\in T^*X$.

\begin{proposition}[Proposition~4.2 and~4.4 of \cite{CFM2}]\label{prop:compact}
There exist a sequence $(\eps_\ell)$, a measurable map $t\mapsto  \gamma^t$ valued in the set of non negative (matrix-valued) measures on $T^*\R^d\times {\bf S}^{d-1}$,
 a measurable family of  (scalar) non negative measures $\nu^t$ on $T^*X$ and a measurable map $t\mapsto M^t$, where
\[
M^t:T^*X\ni(\sigma,v)\longmapsto M^t(\sigma,v)\in \mathcal{L}^1(L^2(N_\sigma X,\C^N))
\]
\[
\mbox{and}\qquad\Tr_{L^2(N_\sigma X,\C^N)} M^t(\sigma,v)=1,\quad \nu^t\text{-a.e. }(\sigma,v)\in T^*X
\]
such that, for every $\theta\in{\mathcal C}_0^\infty(\R)$ and $a\in {\mathcal A}^{(2)}$ supported in $\R^d\times V\times \R^d$, one has:
\begin{eqnarray}
\nonumber
\label{eq:nu1nu2}
\int_\R\theta(t) \left\langle W^{X,\eps_\ell}(t),a\right\rangle dt&\Tend{\eps_\ell}{0} &\int _\R\theta(t) \int_{T^*X}\Tr_{L^2(N_\sigma X,\C^N)}(Q_a^X(\sigma,v)M^t(\sigma,v))\nu^t(d\sigma,dv)dt\\
&& + \int_\R \theta(t) \int_{ T^*\R^d\times {\bf S}^{d-1}} \Tr_{\C^{N\times N}}\left(a_\infty (x,\sigma,\omega) \gamma^t(dx,d\sigma,d\omega) \right) dt
 .\end{eqnarray}
\end{proposition}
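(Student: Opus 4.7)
The plan is to follow the two-microlocal extraction and representation scheme developed in \cite{NierScat,Fermanian_note1,MillerThesis} and adapted to the matrix-valued setting in \cite{CFM2}. First, I would verify a uniform bound: for a suitable seminorm $p$ on $\mathcal A^{(2)}$ (taking into account the smoothness of $\sigma_X$ on $V$ and the homogeneity of $a$ in $\eta$ at infinity), the Calder\'on--Vaillancourt theorem yields $\|\op_\eps(a_\eps)\|_{\mathcal L(L^2(\R^d,\C^N))}\le C\,p(a)$ uniformly in $\eps$. Hence the bilinear map $(\theta,a)\mapsto \int_\R\theta(t)\langle W^{X,\eps}_{\Psi^\eps}(t),a\rangle\,dt$ on $L^1(\R)\times\mathcal A^{(2)}$ is uniformly bounded, and separability of both factors together with a diagonal extraction yields a subsequence $(\eps_\ell)$ along which the limit exists for every test pair.

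Next I would establish positivity and time disintegration. When $a\in\mathcal A^{(2)}$ takes non-negative Hermitian values, so does $a_\eps$; the sharp G\aa rding inequality \eqref{eq:garding} then gives an error of order $\eps$ (whose dependence on $a$ must be controlled through the rescaling, using that $\xi$-derivatives of $a_\eps$ pick up factors $\eps^{-1}$ that are compensated by the $\eps$-semiclassical symbolic calculus), so the limit functional is non-negative. The same uniform bound shows that the $t$-marginal of the limit is absolutely continuous with respect to Lebesgue measure, and the disintegration theorem produces a measurable family $t\mapsto \mathcal M^t$ of positive continuous linear functionals on $\mathcal A^{(2)}$.

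The heart of the proof is then to decompose $\mathcal M^t$ as the sum of the two pieces in \eqref{eq:nu1nu2}. Using a cut-off $\chi$, write $a=\chi(\eta/R)\,a + (1-\chi(\eta/R))\,a_\infty(x,\sigma_X(\xi),\eta/|\eta|) + o_R(1)$. The homogeneous-at-infinity part produces a symbol $a_\eps$ supported where $|\xi-\sigma_X(\xi)|\le C\eps R$, which forces $\xi$ to concentrate on $X$ as $\eps\to 0$; passing to the limit $\eps\to 0$ and then $R\to \infty$ yields a positive matrix-valued Radon measure $\gamma^t$ on $\R^d\times X\times \mathbf S^{p-1}$ paired with $a_\infty$. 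For the compactly-in-$\eta$ supported part, using the tubular coordinates $(\sigma,v)$ of \eqref{eq:x} and the change of variables that converts $\op_\eps(a_\eps)$ into a quantization with respect to the non-semiclassical variable $\eta\in N_\sigma X$, one identifies this part of $\mathcal M^t$ with $\int\Tr(Q_a^X(\sigma,v)\mathcal K^t(\sigma,v))\,\lambda^t(d\sigma,dv)$ for a positive scalar measure $\lambda^t$ and an operator-valued density $\mathcal K^t(\sigma,v)$.

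The main obstacle is to show that $\mathcal K^t(\sigma,v)$ can be taken \emph{trace-class} and non-negative for $\lambda^t$-a.e.\ $(\sigma,v)$. This rests on two ingredients: the operator $Q_a^X(\sigma,v)$ is compact on $L^2(N_\sigma X,\C^N)$ for every fixed $(\sigma,v)$, and the duality between compact and trace-class operators identifies positive linear functionals on the former with positive trace-class operators. Absorbing the trace of $\mathcal K^t$ into the scalar measure yields $\nu^t=\Tr\mathcal K^t\,\lambda^t$ and $M^t=\mathcal K^t/\Tr\mathcal K^t$, which is the desired unit-trace normalization; measurability of $(\sigma,v,t)\mapsto M^t(\sigma,v)$ follows from measurability of the limit functional. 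The final check, that the representation is intrinsic to $X$ rather than to a particular choice of tubular chart or parametrization $\varphi$, is carried out by the covariance computations of \cite[\S 4]{CFM2}, showing that $Q_a^X$ depends only on the manifold structure.
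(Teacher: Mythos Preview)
The paper does not prove this proposition; it is quoted from \cite{CFM2}, so there is no in-paper argument to compare against. Your outline follows the standard two-microlocal scheme of \cite{NierScat,Fermanian_note1,CFM2}, and the architecture (uniform bound, extraction, splitting into a compact-in-$\eta$ part and a homogeneous-at-infinity part, then identifying the compact part as an operator-valued measure via the duality compact/trace-class) is the right one.

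There are, however, two concrete slips. First, the support claim for the ``at infinity'' piece is reversed: the symbol $(1-\chi(\eta/R))a_\infty$ evaluated at $\eta=(\xi-\sigma_X(\xi))/\eps$ is supported on $|\xi-\sigma_X(\xi)|\geq \eps R$, not $\leq C\eps R$, so this piece does \emph{not} force $\xi$ to concentrate on $X$. It is the compactly-in-$\eta$ part $\chi(\eta/R)a$ that localises on $\{|\xi-\sigma_X(\xi)|\le 2\eps R\}$ and yields the operator-valued measure $M^t\nu^t$ after the tubular change of variables. The localisation of $\gamma^t$ above $X$ requires an additional cutoff in $\xi$ (the $\chi((\xi-\sigma_X(\xi))/\delta)$ of the paper's Sections~\ref{sec:infinity1}--\ref{sec:infinity2}) and the limit $\delta\to 0$; without it the at-infinity limit also sees the ordinary semiclassical measure on $V\setminus X$. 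Second, the positivity step is not settled by the standard G\aa rding inequality~\eqref{eq:gengard}: the constant $C_a$ there depends on second derivatives of the symbol, and $\partial_\xi^2 a_\eps$ is of size $\eps^{-2}$, so the error $C_{a_\eps}\eps$ is $O(\eps^{-1})$, not $O(\eps)$. The ``compensation'' you allude to does not occur at the level of~\eqref{eq:gengard}; in \cite{CFM2} positivity is obtained after the unitary change of variables (their Lemma~4.3, also used here in Appendix~\ref{app:twomic}) that turns $\op_\eps(a_\eps)$ into an operator whose symbol is semiclassical only in the tangential variables and of fixed (non-semiclassical) scale in the normal ones, where the usual G\aa rding inequality applies without loss. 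Likewise, uniform $L^2$-boundedness of $\op_\eps(a_\eps)$ should be argued via the one-sided norm $\|\cdot\|_{\mathcal A}$ of Lemma~\ref{lem:singulier}(1) (only $x$-derivatives), not via the full Calder\'on--Vaillancourt seminorm $N_d^\eps$, which again blows up on $a_\eps$.
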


The family of operators $M^t(\sigma,v)$ describes the part of the concentration that comes from finite distance while the measure $\gamma^t(dx,d\sigma,d\omega)$ is often called the part at infinity of the two-scale semi-classical  measure. In particular, 
$$\gamma^t(x,\xi,\omega) {\bf 1}_{\xi\notin X}= \mu^t (x,\xi) \otimes \delta\left( \omega -\frac{\xi-\sigma_X(\xi)}{|\xi-\sigma_X(\xi)|}\right)
$$
where  $\mu^t$ is a semi-classical measure of $(\Psi^\eps(t))_{\eps>0}$.

\begin{remark}\label{rem:M0}
\begin{enumerate}
\item In a stationary setting, similar objects can be associated with (non time dependent) bounded families in~$L^2(\R^d)$. More precisely, if~$(f^\eps)_{\eps>0}$ is a bounded family in~$L^2(\R^d,\C^N)$,  one can associate with~$(f^\eps)_{\eps>0}$ a pair~$M_0d\nu_0$ defined by the existence of a subsequence~$(\eps_\ell)$ such that for all~$a\in{\mathcal C}_0^\infty(\R^{3d},\C^{N\times N})$,
$$ \left( {\rm op}_{\eps_\ell}(a_{\eps_\ell})  f^{\eps_\ell},f^{\eps_\ell}\right)\Tend{\eps_\ell}{0} \int_{T^*X}\Tr_{L^2(N_\sigma X,\C^N)}(Q_a^X(\sigma,v)M_0(\sigma,v))\nu_0(d\sigma,dv).$$
The initial data in the
 the von Neumann Equation~\eqref{eq:heis} (Theorem~\ref{theorem0})  and in the von Neumann  Equations~\eqref{ed:heis2bis} and~\eqref{ed:heis2} (Theorems~\ref{theo:interaction}) are constructed in that manner, with $N=1$, $X=\Lambda_n$ and $f^\eps=\psi^\eps_n(0)$ for  Equation~\eqref{eq:heis} and with $N=2$, $X=\Sigma_n$ and $f^\eps =\,^t( \psi^\eps_n(0),\psi^\eps_{n+1}(0))\in\C^2$ for Equations~\eqref{ed:heis2bis} and~\eqref{ed:heis2}. 
\item When $X=\{\xi_0\}$, then $\sigma_X(\xi)=\xi-\xi_0$ and one has  for $a\in\mathcal C_0^\infty(\R^{3d})$
 \[
 \left({\rm op}_\eps(a_\eps) f^\eps,f^\eps\right)= \left( {\rm op}_1(a(x,\xi_0+\eps \xi, \xi) {\rm e}^{-\frac i\eps x\cdot \xi_0} f^\eps, {\rm e}^{-\frac i\eps x\cdot \xi_0} f^\eps\right)
 \]
 and the operator ${\rm op}_1(a(x,\xi_0, \xi)$ is compact
 As a consequence, the part at finite distance of any two-microlocal measure associated with the concentration of $(f^\eps)$ on $X$ is a projector $|f^{\xi_0}\rangle \langle f^{\xi_0}|$ where $f^{\xi_0}$ is a weak limit in $L^2$ of the family $({\rm e}^{-\frac i\eps x\cdot \xi_0} f^\eps)$.
\item Note that for determining the part of the concentration that comes from finite distance, it is enough to consider symbols $a$ that are compactly supported in all the variables. 
\end{enumerate}
\end{remark}

\subsection{Two microlocal semi-classical measures for the families $(\psi^\eps_n(t))_{n\in N}$}\label{sec:twomicapplication}

These objects allow to determine the semi-classical measure $\varsigma^t$. Indeed, in~\cite{CFM2}, we have proved that they allow to describe $\varsigma^t$ above critical points of $\varrho_n$ for which the hessian of $\varrho_n$ is of maximal rank on the set of critical points~$\Lambda_n$ (see  assumption~{\bf H2} and~{\bf H2'}). We will use them to prove that $\varsigma^t=0$ above all  crossing sets satisfying~{\bf H3} and to show that  $\varsigma^t$ can be non zero because of modes interactions above degenerate crossing points satisfying~{\bf H3'}. 

\subsubsection{Critical points}

We recall here results from~\cite{CFM2,CFM1}, mainly  Theorem~2.2 in~\cite{CFM2} which gives  a precise description of the measures $\mu^t_{n,n}$ above the set $\Lambda_n$ of critical points of $\varrho_n$ (see~\eqref{def:Lambdan}).
Let~$\Omega$ be an open set of $\R^d$ such that $\Lambda_n\cap \Omega$ is a submanifold.

\begin{theorem}[\cite{CFM2}]\label{theorem43}
Let $(M^t_{n}d\nu^t_{n}, \gamma^t_{n})$ be a pair of two-microlocal semi-classical measures associated with the concentration of $(\psi^\eps_n(t))$ above $\Lambda_n\cap\Omega$. Then, there exists $M_n d\nu_{n}$, a two-microlocal measure associated to the concentration at finite distance of $(\psi^\eps_n(0))$ on $\Lambda_n\cap \Omega$ such that  
$\nu^t_{n}=\nu^0_{n}=\nu_n$, 
$t\mapsto M^t_{n}(\xi,v)$ belongs to the space $\mathcal{C}(\R;\mathcal{L}_+^1(L^2(N_\xi\Lambda_n))$ and solves the von Neumann equation~\eqref{eq:heis}
with
 initial data $M^0_{n}=M_n$.
 Moreover, if the Hessian of $\varrho_n$ is of maximal rank on~$\Lambda_n\cap \Omega$, then $\gamma^t_{n}=0$. 
 \end{theorem}

 \begin{remark}\label{rem:critical}
 \begin{enumerate} 
 \item The maximal rank assumption consists in saying that  
 $${\rm Rank}\, {\rm Hess}\,\varrho_n(\sigma)= {\rm codim}\, \Lambda_n,\;\;\sigma\in \Lambda_n,$$
 or equivalently: 
 $\displaystyle{{\rm Ker} \,  {\rm Hess}\,\varrho_n(\sigma)= T_\sigma \Lambda_n,\;\;\sigma\in \Lambda_n.}$
 \item It is important to notice that the families $(M^t_{n})$ are completely determined by the initial data:
 up to a subsequence  for which one has
$$\left( {\rm op}_{\eps_\ell}(a)  \psi^{\eps_\ell}_n(0), \psi^{\eps_\ell}_n(0) \right)\Tend{\eps_\ell}{0} 
\int_{T^*\Lambda_n}\Tr_{L^2(N_\xi\Lambda)}\left[Q_{a}^{\Lambda_n}(\xi,v)M^0_{n}(\xi,v)\right]\nu^0_{n}(d\xi,dv).$$
\end{enumerate}
\end{remark} 

\subsubsection{Conical crossing points}

When~{\bf H1}, {\bf H2} and~{\bf H3} for all $n\in\N^*$,  the crossing sets 
$\Sigma_n$  are manifolds.
Besides, because of the periodicity of the Bloch energies,  $\Sigma_n$ thus is the union of  connected, closed embedded submanifold of $(\R^d)^*$ and we can focus on each of these connected components by considering
the two-microlocal setting of Section~\ref{sec:twomic} with $N=1$ and the family $(\psi^\eps_n)_{\eps>0}$ for this submanifold.

\begin{theorem}\label{theo:2mic1} 
Assume~{\bf H1}, {\bf H2} and~{\bf H3} holds for some $n\in\N^*$.  Let $\Sigma$ be a connected component of $\Sigma_n$. Then any pair 
$(M^t_{n} d\nu_{n}^t, d\gamma_{n}^t)$ of two-microlocal semi-classical measures associated with the concentration of $(\psi^\eps_n(t))$ on $\Sigma$ satisfy 
 $\nu^t_{n}=0$ and $\gamma^t_{n}=0$.
 Therefore $\mu^t_{n,n} {\bf 1} _\Sigma=0$.
\end{theorem}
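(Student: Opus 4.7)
The plan is to derive and analyze the two-microlocal transport equations satisfied by the pair $(M^t_{n,n}d\nu^t_{n,n},d\gamma^t_{n,n})$, using the explicit local structure of $\varrho_n$ near the conical crossing component $\Sigma$. In a tubular neighborhood of $\Sigma$ one can write
\[
\varrho_n(\xi)=\lambda(\xi)-g_n\bigl(\sigma_\Sigma(\xi),\,\xi-\sigma_\Sigma(\xi)\bigr),
\]
where $\lambda=\tfrac12(\varrho_n+\varrho_{n+1})$ is smooth across $\Sigma$ and $g_n$ is positively homogeneous of degree one in its second argument, this being the precise content of conicity together with the differentiability results of Appendix~\ref{app:crossings}. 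Starting from the pseudo-differential equation~\eqref{eq:mode_n}, I would compute, for $a\in\mathcal{A}^{(2)}$ and $a_\eps(x,\xi):=a(x,\xi,(\xi-\sigma_\Sigma(\xi))/\eps)$,
\[
i\eps\,\partial_t\bigl(\op_\eps(a_\eps)\psi^\eps_n,\psi^\eps_n\bigr)_{L^2(\R^d)}=\tfrac{1}{\eps}\bigl([\varrho_n(\eps D_x),\op_\eps(a_\eps)]\psi^\eps_n,\psi^\eps_n\bigr)_{L^2(\R^d)}+O(\eps),
\]
and expand the commutator via the low-regularity Weyl calculus of Lemma~\ref{lem:singulier}, keeping track of the singular $1/\eps$ scaling produced by the $\eta$-dependence of $a_\eps$. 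Integrating against $\theta\in\cC_0^\infty(\R)$ and passing to the limit $\eps\to 0^+$ yields, via Proposition~\ref{prop:compact}, transport equations for $(M^t_{n,n}d\nu^t_{n,n},d\gamma^t_{n,n})$ whose drift is given by the multi-valued limit of $\nabla_\xi\varrho_n$ at~$\Sigma$.

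For the part at infinity $\gamma^t_{n,n}$ on $\R^d\times\Sigma\times N^\infty\Sigma$, the homogeneity of $g_n$ shows that the limit of $\nabla_\xi\varrho_n(\sigma+\eps\eta)$ in the two-microlocal regime equals $\nabla\lambda(\sigma)\mp\nabla_\eta g_n(\sigma,\omega)$, with $\omega=\eta/|\eta|$ and the sign depending on the side of $\Sigma$ from which one approaches. The associated transport equation reads
\[
\bigl(\nabla\lambda(\sigma)\mp\nabla_\eta g_n(\sigma,\omega)\bigr)\cdot\nabla_x\gamma^t_{n,n}=0,
\]
so $\gamma^t_{n,n}$ is invariant under translation by a non-vanishing vector in~$\R^d_x$ (by Assumption~\textbf{H3}), and since it is a finite positive Radon measure over the compact base $\Sigma\times N^\infty\Sigma$, this invariance forces $\gamma^t_{n,n}=0$.

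For the finite part $M^t_{n,n}d\nu^t_{n,n}$ on~$T^*\Sigma$, the same computation restricted to test symbols constant in the $\eta$-variable produces an operator-valued Heisenberg equation on $L^2(N_\sigma\Sigma)$ whose effective Hamiltonian is the Weyl quantization of $-g_n(\sigma,\cdot)$ plus smooth shifts coming from~$\lambda$. Since by~\textbf{H3} the symbol $\pm\nabla_\eta g_n(\sigma,\eta)+\nabla\lambda(\sigma)$ has no zeros on $N_\sigma\Sigma\setminus\{0\}$, the dynamics disperses any mass of $\nu^t_{n,n}$ instantaneously into the part at infinity, which has just been shown to vanish; hence $\nu^t_{n,n}=0$ for almost every~$t$, and the identity $\mu^t_{n,n}\mathbf{1}_\Sigma=0$ then follows from Proposition~\ref{prop:compact}. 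The main obstacle is to carry out the commutator expansion rigorously when $\varrho_n$ is only Lipschitz across~$\Sigma$: the naive Poisson bracket $\{a_\eps,\varrho_n\}$ is undefined on $\Sigma$, and it is precisely the two-microlocal rescaling of the $\eta$-variable which recovers the well-defined directional derivatives $\pm\nabla_\eta g_n(\sigma,\omega)$ appearing as drift in the limit.
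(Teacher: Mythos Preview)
Your treatment of the part at infinity $\gamma^t_{n,n}$ is essentially the paper's argument and is correct: localizing at large $\eta$, the commutator yields the drift $\nabla\lambda(\sigma)-\nabla_\eta g_n(\sigma,\omega)$, invariance of a finite positive measure under a nonzero translation forces it to vanish, and \textbf{H3} is precisely what guarantees that this drift never vanishes.

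The argument for the finite part $M^t_{n,n}d\nu^t_{n,n}$, however, has a genuine gap. First, the commutator computation does \emph{not} produce a Heisenberg equation in $t$: because the evolution \eqref{eq:mode_n} carries a factor $\eps^2$ in front of $\partial_t$, the term $i\eps\,\partial_t(\op_\eps(a_\eps)\psi^\eps_n,\psi^\eps_n)$ is $O(\eps)$ after integrating against $\theta$ and passing to the limit. What one obtains is a \emph{constraint}, namely
\[
\int_\R \theta(t)\,\Tr_{L^2(N_\sigma\Sigma)}\!\Bigl(iQ^\Sigma_{\nabla\lambda\cdot\nabla_x a}(\sigma,v)-\bigl[Q^\Sigma_a(\sigma,v),Q^\Sigma_{g_n}(\sigma)\bigr]\Bigr)M^t_{n,n}(\sigma,v)\,\nu^t_{n,n}(d\sigma,dv)\,dt=0,
\]
not an evolution for $M^t_{n,n}$. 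Second, your heuristic that ``the dynamics disperses any mass of $\nu^t_{n,n}$ instantaneously into the part at infinity'' does not constitute a proof: there is no $t$-dynamics here to disperse anything, and one still has to explain why the constraint forces $M^t_{n,n}=0$.

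The paper closes this gap by two steps you are missing. Decomposing $\nabla\lambda(\sigma)$ into its tangential part $\nabla^\sharp\lambda(\sigma)\in T_\sigma\Sigma$ and normal part $\nabla^\perp\lambda(\sigma)\in N_\sigma\Sigma$, the constraint splits into (i) an invariance of $\nu^t_{n,n}$ under the flow $(\sigma,v)\mapsto(\sigma,v+s\nabla^\sharp\lambda(\sigma))$ on $T^*\Sigma$, which forces $\supp\nu^t_{n,n}\subset\{\nabla^\sharp\lambda=0\}$, and (ii) on that support, a commutation identity $[Q^\Sigma_F(\sigma),M^t_{n,n}(\sigma,v)]=0$ with $F(\sigma,\eta)=\nabla^\perp\lambda(\sigma)\cdot\eta+g_n(\sigma,\eta)$. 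The crucial remaining point is an algebraic lemma (Lemma~\ref{lem:comm}): a positive trace-class operator on $L^2(\R^p)$ that commutes with a Fourier multiplier $F(D_z)$ whose symbol has nonvanishing gradient must be zero, because any nonzero eigenvector would generate, under powers of $F(D_z)$, a finite-dimensional space of functions whose Fourier transforms are supported on a level set of $F$, hence on a null set. Hypothesis~\textbf{H3} is what ensures $\nabla_\eta F(\sigma,\eta)\neq 0$ once $\nabla^\sharp\lambda(\sigma)=0$. Without this commutation lemma, the finite-distance vanishing does not follow.
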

The proof of this result is performed in Section~\ref{sec:proof1}.  

\subsubsection{Degenerate crossing points}

We now  suppose that $n$ is fixed and we consider the concentration of $\psi^\eps_n(t)$ and $\psi^\eps_{n+1}(t)$ when the crossing set $\Sigma_n$ involving the two Bloch energies $\varrho_n$ and $\varrho_{n+1}$  satisfies~{\bf H3'}. We consider a connected component $Y$ of $\Sigma_n$ which is assumed to be 
  included  into $\Lambda_n$ and $\Lambda_{n+1}$, the sets of critical points of $\varrho_n$ and $\varrho_{n+1}$ respectively. 
We consider the two-microlocal setting of Section~\ref{sec:twomic} for  $N=2$, the submanifold $Y$  and the family 
$$\Psi^\eps(t)=(\psi^\eps_n(t),\psi^\eps_{n+1}(t))\in\C^2.$$
In view of Lemma~\ref{lem:structurevarrho}, the equation satisfied by $\Psi^\eps$ is 
\begin{equation}\label{eq:Lambda}
i\eps^2 \partial_t \Psi^\eps = \Theta(\eps D) \Psi^\eps+\eps^2  V_{\rm ext} (t,x) \Psi^\eps +\eps^3 F^\eps(t,x)
\end{equation}
with 
$(F^\eps(t))$ uniformly bounded in $L^2(\R^d)$ and 
\begin{align}\label{eq:Theta}
\Theta(\xi)= {\rm Diag} (\varrho_n(\xi), \varrho_{n+1}(\xi))=\lambda_n(\xi) {\rm Id} -  g_n\left(\xi, \xi-\sigma_{Y}(\xi)\right)J,\;\;\;\; 
J=\begin{pmatrix} 1 & 0\\ 0 & -1\end{pmatrix}
\end{align}
where $g_n\in\cC^\infty\left(\sqcup_{\xi\in\Omega} \left(\{\xi\} \times N_{\sigma_{\Sigma_n}(\xi)}\Sigma_n\right)\right)$ (see also~\eqref{def:gn} where the restriction of $g_n$ to  points of $\Sigma_n$ is introduced), and the function  $\lambda_n$ is defined in~\eqref{def:lambda_g}. Note that by assumption~{\bf H3'}, there exists $c>0$ such that we have $g_n(\sigma,\eta)\leq c |\eta|^q$ for all $\sigma\in\Sigma_n$ and $\eta\in N_\sigma\Sigma_n$, and $g_n(\xi,\eta)=|\eta|^2 \theta_n(\xi)$ by (2) of Lemma~\ref{lem:structurevarrho}.

\begin{theorem}\label{theo:2mic2}
We suppose that {\rm\bf H1'}, {\rm\bf H2'} and {\rm\bf H3'} hold. Consider a connected component $Y$ of $\Sigma_n$ that is included in $\Lambda_n\cap\Lambda_{n+1}$.
Let $(M^td\nu^t,d\gamma^t)$ be a pair associated with the concentration of the family~$(\Psi^\eps(t))_{\eps>0}$ on $Y$. Then, $\gamma^t=0$ and there exists $Md\nu^0$ associated with the concentration at finite distance of $(\Psi^\eps(0))_{\eps>0}$ on $Y$ such that $\nu^t=\nu^0$ and the following holds: 
\begin{enumerate}
\item  If
 $q=2$, $M^t$ satisfies~\eqref{ed:heis2}  with initial data $M$. 
\item
If $q>2$,  $M^t$ satisfies~\eqref{ed:heis2bis}  with initial data $M$.
\end{enumerate}
\end{theorem}

\begin{remark}\label{rem:tata45}
Note that, even in Case (2), it can happen that the modes interact above the crossing, if they were doing so at time $t=0$. Corollary~\ref{cor:titi} provides examples of such initial data.
\end{remark}

The proof of Theorem~~\ref{theo:2mic2} is the subject of Section~\ref{sec:proof2}.


\section{Proof of Theorem~\ref{theo:2mic1}}\label{sec:proof1}

We prove Theorem~\ref{theo:2mic1} in two steps: first we focus on the part of the two-scale semi-classical measure that comes from infinity in Section~\ref{sec:infinity1}, then we concentrate on the part at finite distance in Section~\ref{sec:finite1}. We use the characterization of Lemma~\ref{lem:structurevarrho} and write
\begin{equation}\label{eq:toto9}
\varrho_n(\xi) =\lambda_n(\xi)- 
g_n(\xi,\xi-\sigma_{\Sigma}(\xi)),\;\;
\varrho_{n+1}(\xi) = \lambda_n(\xi)+
g_n(\xi,\xi-\sigma_{\Sigma}(\xi))
\end{equation}
with $\lambda_n$ smooth and $g_n\in\cC^\infty\left(\sqcup_{\xi\in\Omega} \left(\{\xi\} \times N_{\sigma_{\Sigma_n}(\xi)}\Sigma_n\right)\right)$ and $\eta\mapsto g_n(\xi,\eta)$ homogeneous of order~$1$ in~$\eta$ (see (1) in Lemma~\ref{lem:structurevarrho}).
Note that the function introduced in the introduction in~\eqref{def:gn} is the restriction of $g_n$ to $N\Sigma_n$ (and thus have been denoted similarly).

\subsection{The two-scale semiclassical  measures  at infinity}\label{sec:infinity1}

Let $a\in{\mathcal A}^{(2)}$ supported in $\R^d\times W\times \R^d$ where~$W$ is an open subset of $\R^d$ where we have tubular coordinates for $\Sigma$. Let   $\chi\in{\mathcal C}_0^\infty(\R^d)$ such that $\chi=1$ on $B(0,1)$ and $\chi=0$ on $B(0,2)^c$ with $0\leq \chi\leq 1$.
We set for $R,\delta>0$
$$a^{R,\delta}(x,\xi,\eta)= a(x,\xi,\eta) ((1-\chi(\eta/R)) \chi ((\xi-\sigma_{\Sigma}(\xi))/\delta).$$
Then, in view of equation~\eqref{eq:mode_n},
\begin{equation}\label{eq:op(a)}
i\eps {d\over dt} ({\rm op}_\eps(a^{R,\delta}_\eps) \psi^\eps_n(t),\psi^\eps_n(t))=\eps^{-1} \left([{\rm op}_\eps(a^{R,\delta}_\eps) ,\varrho_n(\eps D)] \psi^\eps_n(t),\psi^\eps_n(t)\right) +O(\eps).
\end{equation}
Using~\eqref{eq:toto9}, 
the homogeneity of $g_n$, 
and  the notation introduced in~\eqref{def:geps}, we
write 
$$\varrho_n(\eps D)= \lambda_n(\eps D)-\eps g_n(\eps D,  D -\eps^{-1} \sigma_\Sigma(\eps D)) =  \lambda_n(\eps D) -\eps (g_n)_\eps(\eps D).$$
Therefore, we have 
$$\eps^{-1} \left[{\rm op}_\eps(a^{R,\delta}_\eps) ,\varrho_n(\eps D)\right] = {\rm op}_\eps (\nabla_x a^{R,\delta}_\eps\cdot \nabla \lambda_n)  - \left[{\rm op}_\eps(a^{R,\delta}_\eps) , (g_n)_\eps(\eps D)\right] +O(\eps).$$
We can now apply Lemma~\ref{lem:calculRdelta} with $k=0$,  and we obtain
$$\eps^{-1} \left[{\rm op}_\eps(a^{R,\delta}_\eps) ,\varrho_n(\eps D)\right] = {\rm op}_\eps ( b_\eps )+O(\eps) + O(R^{-1})+O(\delta)$$
with $b = \nabla_x a^{R,\delta}\cdot \nabla \lambda_n -\nabla_x a^{R,\delta}\cdot \nabla_\eta g_n$.
Passing to the limits  $\eps\rightarrow 0$, then $R\rightarrow +\infty$ after time integration against $\theta\in{\mathcal C}_0^\infty(\R)$, we obtain 
by~\eqref{eq:op(a)}, 
$$\int_{\R}\theta(t) \left( {\rm op}_\eps (b_\eps) \psi^\eps_n(t),\psi^\eps_n(t) \right)  dt= O(\eps) + O(R^{-1})+O(\delta).$$
We deduce 
$$\int_{\R\times \R^d\times \Sigma\times{\bf S}^{d-1}} \theta(t) (\nabla \lambda_n(\sigma) -\nabla _\eta g_n(\sigma,\omega) )\cdot \nabla_xa_\infty(x,\sigma,\omega)  d\gamma^t_{n}(x,\sigma,\omega)=0.$$
This implies that the measure $\gamma^t _{n}(x,\sigma,\omega)$ is invariant by the flow 
$$(x,\sigma,\omega)\mapsto (x+s(\nabla \lambda_n(\sigma)-\nabla _\eta g_n(\sigma,\omega) ,\sigma,\omega),\;\;s\in\R.$$ 
As a consequence, $\gamma^t_{n}$ is supported on $\{\nabla \lambda_n(\sigma)-\nabla _\eta g_n(\sigma,\omega)=0\}$, and by~\textbf{H3}, 
 $\gamma_{n}^t=0$.

\subsection{The two-scaled semiclassical  measures  coming from  finite distance}\label{sec:finite1}
In view of Remark~\ref{rem:M0} (3), we now choose  $a\in{\mathcal C}_0^\infty(\R^{d}\times W\times \R^d)$ where $W$ is as above. Let  $\theta\in L^1(\R)$.
Arguing as in~\eqref{eq:op(a)}, we observe
$$\int_\R \theta(t) \left([{\rm op}_\eps(a_\eps), \eps^{-1} \varrho_n(\eps D_x) ] \psi^\eps_n(t) ,\psi^\eps_n(t)\right)=O(\eps).$$
Using that $a$ is compactly supported in the variable $\eta$ and the homogeneity of $g$, we obtain in $\mathcal L(L^2(\R^d))$,
$$\frac 1\eps [{\rm op}_\eps(a_\eps),  \varrho_n(\eps D_x) ] =i {\rm op}_\eps (\nabla \lambda_n(\xi)\cdot \nabla_x a_\eps)  -[{\rm op}_\eps(a_\eps), (g_n)_\eps(\eps D) ] +O(\eps) .$$
Passing to the limit $\eps\rightarrow 0$ thanks to Lemma~\ref{lem:gepsfinite}, we obtain 
\begin{equation}\label{eq:ovm1}
\int_\R \theta(t) {\rm Tr}_{L^2(N_\sigma \Sigma)} ( i Q^{\Sigma}_{\nabla \lambda_n \cdot \nabla_x a}-[Q_a^{\Sigma}(\sigma,v), Q_g^{\Sigma}(\sigma)] M^t_n(\sigma,v))\nu^t_n(\sigma,v) dt=0.
\end{equation}
This relation has important consequences on the structure of $M^t_{n,n}$ and $\nu^t_n$. For stating them, we write
$$\nabla \lambda_n(\sigma)=\nabla^\perp \lambda_n(\sigma)+\nabla^\sharp \lambda_n(\sigma),\;\;\nabla^\perp \lambda_n(\sigma)\in N_\sigma \Sigma\;\;\mbox{ and}\;\; 
\nabla^\sharp \lambda_n(\sigma)\in T_\sigma \Sigma.$$

\begin{lemma}\label{lem:toto59}
Equation~\eqref{eq:ovm1} implies 
$${\rm supp} (\nu^t_{n}) \subset \{ (\sigma,v)\in T{\Sigma},\;\; \nabla^\sharp \lambda_n(\sigma )=0\}$$
and
$$ [Q_F^\Sigma( \sigma), M^t_{n}(\sigma,v) ]=0\;\;d\nu^t_{n} \, a.e. \, (\sigma,v)\in N\Sigma$$
where 
$F(\sigma,\eta)=\nabla^\perp \lambda_n(\sigma) \cdot \,  \eta + g_n(\sigma, \eta).$
\end{lemma}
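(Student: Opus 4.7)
The proof is carried out by testing~\eqref{eq:ovm1} against two carefully chosen subclasses of symbols $a\in\mathcal{A}^{(2)}$, after a preliminary algebraic rewriting that separates the contributions tangent and normal to~$\Sigma$. Splitting
$$\nabla\lambda_n=\nabla^\sharp\lambda_n+\nabla^\perp\lambda_n\qquad\text{and}\qquad \nabla_x a = \partial_v a + \partial_z a$$
at $x=v+z$ with $v\in T^*_\sigma\Sigma$ and $z\in N_\sigma\Sigma$, and using the intrinsic Weyl-calculus identities
$$Q^\Sigma_{w\cdot \partial_v a}(\sigma,v)=w\cdot\partial_v Q_a^\Sigma(\sigma,v)\quad(w\in T_\sigma\Sigma),\qquad Q^\Sigma_{w\cdot \partial_z a}(\sigma,v)=i\,[w\cdot D_z,\,Q_a^\Sigma(\sigma,v)]\quad(w\in N_\sigma\Sigma),$$
together with cyclicity of the trace, one recasts~\eqref{eq:ovm1} as
\begin{equation*}
\int_\R\theta(t)\int_{T^*\Sigma}\Tr\!\Bigl[i\,\nabla^\sharp\lambda_n(\sigma)\!\cdot\!\partial_v Q_a^\Sigma(\sigma,v)\,M^t_{n,n}(\sigma,v)+Q_a^\Sigma(\sigma,v)\,[Q_F^\Sigma(\sigma),M^t_{n,n}(\sigma,v)]\Bigr]\nu^t_{n,n}(d\sigma,dv)\,dt=0,\qquad (\star)
\end{equation*}
where one recognizes $Q_F^\Sigma(\sigma)$ as the operator with symbol $F(\sigma,\eta)=\nabla^\perp\lambda_n(\sigma)\cdot\eta+g_n(\sigma,\eta)$.

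To extract the support statement, I would test $(\star)$ against symbols for which $Q_a^\Sigma$ is a scalar multiple of the identity. Using the tubular coordinates on a neighborhood $W$ of $\Sigma$, take $a(x,\xi,\eta)=b(\xi)\phi\bigl(v_\xi(x)\bigr)$, where $v_\xi(x)$ denotes the tangential component of $x-\sigma_\Sigma(\xi)$ in $T^*_{\sigma_\Sigma(\xi)}\Sigma$; then $Q_a^\Sigma(\sigma,v)=b(\sigma)\phi(v)\,\Id$. The commutator contribution in $(\star)$ vanishes identically, and $\Tr M^t_{n,n}(\sigma,v)=1$ $\nu^t_{n,n}$-a.e.\ reduces $(\star)$ to
$$\int_\R\theta(t)\int_{T^*\Sigma}b(\sigma)\,\nabla^\sharp\lambda_n(\sigma)\cdot\nabla_v\phi(v)\,\nu^t_{n,n}(d\sigma,dv)\,dt=0.$$
As $\theta,b,\phi$ vary freely, this distributional identity means that in each fiber $T_\sigma^*\Sigma$ the slice of $\nu^t_{n,n}$ is invariant under translations along $\nabla^\sharp\lambda_n(\sigma)$; finiteness of the positive measure $\nu^t_{n,n}$ forces $\nabla^\sharp\lambda_n(\sigma)=0$ on $\supp\nu^t_{n,n}$, proving the first claim.

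Knowing that the first term in $(\star)$ vanishes $\nu^t_{n,n}$-a.e., what remains is
$$\int_\R\theta(t)\int_{T^*\Sigma}\Tr\bigl(Q_a^\Sigma(\sigma,v)\,[Q_F^\Sigma(\sigma),M^t_{n,n}(\sigma,v)]\bigr)\nu^t_{n,n}(d\sigma,dv)\,dt=0,$$
for every $\theta\in L^1(\R)$ and every $a\in\mathcal{A}^{(2)}$. Standard approximation in the two-microlocal calculus (cf.\ Appendix~\ref{app:twomic}) shows that the family $\{Q_a^\Sigma(\sigma,v):a\in\mathcal{A}^{(2)}\}$ is dense in the compact operators on $L^2(N_\sigma\Sigma)$; a duality argument in trace-class operators then yields $[Q_F^\Sigma(\sigma),M^t_{n,n}(\sigma,v)]=0$ for $\nu^t_{n,n}$-a.e.\ $(\sigma,v)$, which is the second claim.

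The main obstacle lies in the scalarization step: the cutoff $\phi(v_\xi(x))$ intrinsically depends on the tubular coordinate $\sigma_\Sigma(\xi)$, so the admissible test symbols must be localized to $W$ and their scalar nature under $Q^\Sigma$ must be checked modulo lower-order contributions that vanish in the limit. Once this geometric bookkeeping is settled, the algebraic identities leading to $(\star)$ and the density/duality passage are routine.
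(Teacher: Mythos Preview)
Your proof is correct and follows essentially the same route as the paper's. Both arguments rewrite~\eqref{eq:ovm1} via the tangential/normal splitting of $\nabla\lambda_n$ and the Weyl identity $Q^\Sigma_{w\cdot\partial_z a}=i[w\cdot D_z,Q_a^\Sigma]$ to arrive at your equation~$(\star)$; the paper then reads this as the operator-valued relation $i\nabla^\sharp\lambda_n\cdot\nabla_v(M^t_{n,n}d\nu^t_{n,n})+[Q_F^\Sigma,M^t_{n,n}d\nu^t_{n,n}]=0$ and takes the trace, whereas you test against scalar symbols---these are dual formulations of the same step, and the remaining conclusions (invariance $\Rightarrow$ support, then commutator by density) are identical.
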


\begin{proof}[Proof of Lemma~\ref{lem:toto59}]
We use  a system of equations $\varphi(\xi)=0$ of $\Sigma$ and the matrix $B$ defined in~\eqref{def:B}.
For $\sigma\in X$ and $\zeta\in T_\sigma\R^d$, we have
$$({\rm Id} -d\sigma_{\Sigma}(\sigma))\zeta= B(\sigma) d\varphi(\sigma)\zeta,$$
which allows to decompose $\zeta$ as 
$$\zeta = d\sigma_{\Sigma}(\sigma)\zeta+ B(\sigma) d\varphi(\sigma)\zeta,\;\; d\sigma_{\Sigma}(\sigma)\zeta\in T_\sigma {\Sigma}\;\;\mbox{and} 
\;\;B(\sigma) d\varphi(\sigma)\zeta\in N_\sigma {\Sigma}.$$
In particular, $B(\sigma) d\varphi(\sigma)={\rm Id} $ on $N_\sigma {\Sigma}$. 
In view of this observation, 
we 
 write for $(\sigma,v)\in T\Sigma$, and $(z,\zeta) \in (N_\sigma {\Sigma})^*$ 
\begin{align*}
\nabla \lambda_n(\sigma)\cdot  \nabla_x a(v+ \, ^t d\varphi(\sigma) z,\sigma , B(\sigma) \zeta) 
&=\nabla ^\sharp \lambda_n(\sigma) \cdot \nabla_v a(v+ \, ^t d\varphi(\sigma) z,\sigma , B(\sigma) \zeta)\\
&\qquad
 +\nabla^\perp  \lambda_n(\sigma) \cdot (B( \sigma)\nabla_z)\cdot \left(a(v+ \, ^t d\varphi(\sigma) z,\sigma , B(\sigma) \zeta) \right)
\end{align*}
 We obtain 
$$Q^\varphi_{\nabla \lambda_n(\xi)\cdot \nabla_x a}(\sigma,v) = 
\nabla^\sharp \lambda_n(\sigma) \cdot \nabla_v Q_a^\varphi (\sigma, v )
-i [ Q_F^\varphi(\sigma), Q_a ^\varphi(\sigma,v)],$$
with $Q_F^\varphi(\sigma)= F(\sigma, B(\sigma)D_z)$. 
Therefore,  
equation~\eqref{eq:ovm1}  
writes
$$\int_\R \theta(t) {\rm Tr}_{L^2(N_\sigma \Sigma)} ( i \nabla^\sharp \lambda_n(\sigma) \cdot \nabla_v Q^\Sigma_a (\sigma, v )+[Q_a^{\Sigma}(\sigma,v), Q_F^\Sigma(\sigma)] M^t_n(\sigma,v))\nu^t_n(\sigma,v) dt=0,$$
We deduce 
$$ i \nabla^\sharp \lambda_n (\sigma) \cdot \nabla _v (M^t_{n} d\nu_{n}^t) + [Q^\Sigma_F(\sigma) , M^t_{n} d\nu^t_{n}] =0 .$$ 
Taking the trace, it gives 
$$\nabla^\sharp \lambda_n(\sigma ) \cdot \nabla_v \nu^t_{n}=0,$$
whence the invariance of $\nu^t_n$ by the flow defined on $T\Sigma$ by 
$$(\sigma,v)\mapsto (\sigma, v+s\nabla^\sharp \lambda_n(\sigma )),\;\;s\in\R,$$
which implies the results.
\end{proof}

  We conclude the analysis of the two-scaled Wigner measures at finite distance $M^t$ 
 by using Lemma~\ref{lem:comm} below. For this, we need to check that its assumptions are satisfied.  Hypothesis~\textbf{H3} implies that if $\nabla^\sharp \lambda_n(\sigma )=0$, then for all $\eta\in N_\sigma {\Sigma}\setminus\{0\}$,
$$ \nabla^\perp \lambda_n(\sigma)-\nabla_\eta g_n(\sigma,\eta) \not=0.$$
 Considering $\nabla_\zeta (F(\sigma,B(\sigma)\zeta))$, we have 
$$\nabla_\zeta(F(\sigma,B(\sigma)\zeta)) =\, ^t B(\sigma)\left( \nabla^\perp \lambda_n(\sigma)-\partial_\eta g_n(\sigma,\eta)  \right)\not=0,$$
because $B(\sigma)$ is invertible on~$N_\sigma \Sigma$, 
and the assumptions of the next lemma are satisfied.

\begin{lemma}\label{lem:comm}
Let $p\in\N$ and ${ M}$ be a non negative trace-class operator on $L^2(\R^p)$, and $F\in \mathcal C^\infty(\R^p\setminus\{0\})$ such that $\nabla_\zeta F(\zeta)\not=0$ for all $\zeta\in\R^p\setminus \{0\}$. 
Assume 
$\left[ F(D_z) , {M} \, \right]=0.$
Then ${ M}=0$.
\end{lemma}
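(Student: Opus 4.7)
\emph{Plan of proof.} The plan is to recognize $M$ through its integral kernel and transfer the commutation relation into a pointwise constraint in Fourier variables. Since $M$ is trace class, it is in particular Hilbert–Schmidt, hence realized by an integral kernel $K\in L^2(\R^p\times\R^p)$. Equivalently, by Plancherel, $M$ is represented on the Fourier side by a kernel $\widetilde K\in L^2(\R^p\times\R^p)$ such that
\[
\widehat{Mu}(\zeta)=(2\pi)^{-p}\int_{\R^p}\widetilde K(\zeta,\eta)\,\widehat u(\eta)\,d\eta.
\]
Because $F(D_z)$ acts on the Fourier side by multiplication by $F(\zeta)$, testing the identity $[F(D_z),M]=0$ against Schwartz functions whose Fourier transforms are compactly supported in $\R^p\setminus\{0\}$ will yield
\[
\bigl(F(\zeta)-F(\eta)\bigr)\,\widetilde K(\zeta,\eta)=0\qquad\text{for a.e.\ }(\zeta,\eta)\in\R^p\times\R^p.
\]

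The proof will then be complete once I can show that the set $\mathcal Z:=\{(\zeta,\eta)\in\R^p\times\R^p:F(\zeta)=F(\eta)\}$ has zero Lebesgue measure in $\R^{2p}$. This is where the hypothesis $\nabla F\neq 0$ on $\R^p\setminus\{0\}$ enters: on $(\R^p\setminus\{0\})^2$ the map $\Phi(\zeta,\eta):=F(\zeta)-F(\eta)$ has gradient $(\nabla F(\zeta),-\nabla F(\eta))$, which is nowhere vanishing. Hence $\Phi$ is a submersion there, so $\mathcal Z\cap(\R^p\setminus\{0\})^2=\Phi^{-1}(0)$ is a smooth hypersurface of $\R^{2p}$ and thus Lebesgue-negligible. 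The complementary set $\{\zeta=0\}\cup\{\eta=0\}$ is of codimension $p\geq 1$, hence also negligible, and therefore $|\mathcal Z|=0$. Consequently $\widetilde K=0$ a.e., whence $K=0$ and finally $M=0$.

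The only genuine obstacle I foresee is technical: the commutation relation mixes the bounded operator $M$ with the possibly unbounded Fourier multiplier $F(D_z)$, so one must specify in what sense it holds. The natural reading is that $M$ preserves the maximal domain of $F(D_z)$ and that the two operators commute on it; but for the argument above, it is enough to evaluate the commutator on the dense class of Schwartz functions whose Fourier transform vanishes in a neighborhood of $0$, where $F$ is smooth and bounded on the relevant support. Note that neither positivity nor self-adjointness of $M$ is used in this scheme; they are only inherited from the context in which the lemma is applied.
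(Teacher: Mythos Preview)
Your argument is correct and takes a genuinely different route from the paper's. The paper argues spectrally: since $M$ is positive trace-class, each nonzero eigenvalue $\ell$ has finite multiplicity; the commutation hypothesis makes the $\ell$-eigenspace invariant under $F(D_z)$, so any eigenvector $\phi$ satisfies a nontrivial relation $\sum_{j=0}^k\alpha_j\,F(D_z)^j\phi=0$; on the Fourier side this forces $\widehat\phi$ to be supported on a finite union of level sets $\{F=\beta\}$, which are hypersurfaces by $\nabla F\neq 0$ and hence Lebesgue-null, giving $\phi=0$.

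Your kernel approach is more direct and in fact more general: it uses only that $M$ is Hilbert--Schmidt (no positivity, no self-adjointness), whereas the paper's proof needs $M$ to have enough eigenvectors to detect nontriviality---self-adjointness is what guarantees this. Conversely, the paper only needs the single level sets $\{F=\beta\}\subset\R^p$ to be null, while you need the double level set $\{F(\zeta)=F(\eta)\}\subset\R^{2p}$ to be null; both follow immediately from $\nabla F\neq 0$ via the implicit function theorem. The domain caveat you flag for the unbounded multiplier $F(D_z)$ is equally present in the paper's proof (one must make sense of $F(D_z)^j\phi$) and is handled the same way in context, by working with functions whose Fourier support avoids the origin.
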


\begin{proof}
Let $\phi\in L^2(\R^p)$ be an eigenvector of ${M}$ for an eigenvalue $\ell\not=0$. Then, for all $j\in \N$, 
$$\phi_j:=(F( D_z))^j \phi$$
 is also an eigenvector for $\ell$. Since $\ell$ is of finite multiplicity because $ M$ is trace-class, we deduce that the set $\{ \phi_j, j\in\N\}$ is of finite dimension. Let $k\in\N^*$ the first index such that the family $(\phi_j)_{0\leq j\leq k}$ is not a family of independent vectors.  Then, there exist  $\alpha_0,\cdots \alpha_k\in \R$ non all equal to~$0$, and such that 
$\displaystyle{\sum_{j=0}^k \alpha_j \phi_j=0.}$
In Fourier variables, we obtain 
$\displaystyle{\left(\sum_{j=0}^k \alpha_j F( \zeta)^j\right) \widehat \phi(\zeta)=0.}$
The set 
$\displaystyle{{\mathcal C} =\left\{ \zeta\in\R^p,\;\; \sum_{j=0}^k \alpha_j F(\zeta)^j =0\right\}}$ is the union of a finite number of sets ${\mathcal C}_\beta$,
$$\mathcal C_\beta= \{ F(\zeta)=\beta\}$$
for $\beta$ a real-valued root of the polynomial $\sum_{0\leq j\leq k} \alpha_j X^j$. Since $\nabla_\zeta F(\zeta)\not=0$ for all $\zeta\not=0$, these sets $\mathcal C_\beta$ are hypersurfaces of $\R^p$ and thus of Lebesgues measure~$0$. So it is for ${\mathcal C}$ and we deduce that $\phi=0$. 
\end{proof}


\section{Proof of Theorem~\ref{theo:2mic2}}\label{sec:proof2}

Theorem~\ref{theo:2mic2} contains two statements. First, it states  that the two-scale  semi-classical measures at infinity is $0$, what we prove in Section~\ref{sec:infinity2} below, by showing invariance properties of its diagonal elements. Secondly, it gives transport equations that allow to compute the two-scale semi-classical measures coming from finite distance from the knowledge of the  initial data. We focus on this latter point in Section~\ref{sec:finitedist2}.

 \subsection{Analysis at infinity} \label{sec:infinity2}
 We perform the proof for $q=2$, the proof for $q>2$ is similar. 
 Let $\Psi^\eps$ be a family of solutions to equation~\eqref{eq:Lambda}. 
 Let $a\in{\mathcal A}^{(2)}$ supported in $\R^d\times W\times \R^d$ where~$W$ is an open subset of $\R^d$ where we have tubular coordinates for the manifold $Y$. Let   $\chi\in{\mathcal C}_0^\infty(\R^d)$ such that $\chi=1$ on $B(0,1)$ and $\chi=0$ on $B(0,2)^c$ with $0\leq \chi\leq 1$.
We set for $R,\delta>0$
$$a^{R,\delta}(x,\xi,\eta)= a(x,\xi,\eta) ((1-\chi(\eta/R)) \chi ((\xi-\sigma_{Y}(\xi))/\delta)$$
and we consider the symbol 
$$\tilde a^{R,\delta}(x,\xi,\eta)=|\xi-\sigma_Y(\xi)|^{-1} a^{R,\delta}(x,\xi,\eta).$$
By Lemma~\ref{lem:singulier}~(1) (see also  Appendix~\ref{app:twomic}), there exists a constant $C>0$ such that  
$$\| {\rm op} _\eps(\tilde a^{R,\delta}_\eps)\|_{\mathcal L(L^2(\R^d))}\leq C (\eps R)^{-1}.$$
 In view of~\eqref{eq:Theta} and of (2) in Lemma~\ref{lem:structurevarrho}, we write with the notations introduced in~\eqref{def:geps}
 $$\Theta(\eps D)= \lambda_n(\eps D){\rm Id} - \eps^2 (g_n)_\eps (\eps D).$$
 Therefore,  if $E$ is a constant diagonal matrix of $\C^{2\times 2}$,  we obtain 
$$
  \left[ {\rm op}_\eps(\tilde a^{R,\delta}_{\eps} E ) \;,\; \Theta(\eps D) \right]  =
 \left[ {\rm op}_\eps(\tilde a^{R,\delta}_{\eps}) \;,\;  \lambda_n(\eps D)\right]   E -
\eps^2  \left[ {\rm op}_\eps(\tilde a^{R,\delta}_{\eps}) \;,\; (g_n)_\eps (\eps D)\right] EJ,
 $$
 where we have used that $EJ=JE$.
 We observe that setting 
 $$b(x,\xi,\eta)= |\eta|^{-1} a^{R,\delta} (x,\xi,\eta),$$
 we have 
 $$\eps\, {\rm op}_\eps(\tilde a^{R,\delta}_{\eps})
 ={\rm op}_\eps(b_{\eps})$$
 and we can apply Lemma~\ref{lem:calculRdelta} because $b\in\mathcal A^{(2)}_{-1}$ and $g_n\in\mathcal H_2$. We deduce 
$$ {\eps}  \left[ {\rm op}_\eps(\tilde a^{R,\delta}_{\eps}) \;,\; (g_n)_\eps (\eps D)\right] =
  \left[ {\rm op}_\eps(b_{\eps}) \;,\; (g_n)_\eps (\eps D)\right] = {\rm op}_\eps ((\nabla_x b \cdot \nabla _\eta g)_\eps)$$
  with $\nabla_x b(x,\xi,\eta)= |\eta|^{-1} \nabla_x a^{R,\delta} (x,\xi,\eta).$
 Therefore, we are left with
$$
\frac 1 \eps  \left[ {\rm op}_\eps(\tilde a^{R,\delta}_{\eps}E) \;,\; \Theta(\eps D) \right]   =
{\rm op}_\eps (\nabla_x \tilde a^{R,\delta}_{\eps}\cdot\nabla_\xi \lambda_n(\xi)  ) E - 
{\rm op}_\eps ((\nabla_x b \cdot \nabla _\eta g_n)_\eps) EJ + O(\eps)+O(R^{-1})+O(\delta).
$$
We use $\nabla \lambda_n(\xi)={\rm Hess}\, \lambda_n(\sigma_Y(\xi) )(\xi-\sigma_Y(\xi)) +O((\xi-\sigma_Y(\xi))^2)$ and we set 
$$
c(x,\xi,\eta) :=\nabla_x  a^{R,\delta}(x,\xi,\eta)\cdot {\rm Hess} \,  \lambda_n(\sigma_Y(\xi)) \frac{\eta}{|\eta|} E -
\nabla_x  a^{R,\delta}(x,\xi,\eta) \cdot \frac {1} {|\eta|} \nabla_\eta g_n\left( \xi,\eta\right) EJ.
$$
Note that  $c\in{\mathcal A}^{(2)}$ and 
$$
\frac 1 \eps  \left[ {\rm op}_\eps(\tilde a^{R,\delta}_{\eps} E) \;,\; \Theta(\eps D) \right]
= {\rm op}_\eps(c_\eps) + O(\eps)+O(R^{-1})+O(\delta).
$$
Therefore, passing to the limit $\eps$ to $0$, then $R$ to $+\infty$ and finally $\delta$ to~$0$, we obtain for all
$\theta\in{\mathcal C}_0^\infty(\R)$,
$$\int_{\R} \theta(t)\int_{\R^d\times Y\times {\bf S}^{d-1}} {\rm Tr}_{\C^{2\times 2}}\left( \nabla _xa_\infty(x, \sigma,\omega)\cdot 
({\rm Hess}\,  \lambda_n(\sigma) \omega \, E - \nabla_\eta g_n(\sigma, \omega) EJ)\right) \gamma^t(dx,d\sigma, d\omega)dt
=0.$$
Let us denote by  $\gamma_{n,n}^t$ and $\gamma_{n+1,n+1}^t$ the diagonal coefficients of the matrix-valued measure $\gamma^t$. Choosing the~$2\times 2$ diagonal matrix~$E$ such that~$EJ=E$, we deduce that the measure  $\gamma_{n,n}^t$ is  invariant by the flow 
$$(x,\sigma,\omega) \mapsto  (x+s({\rm Hess}\,\lambda_n(\sigma) \omega -\nabla_\eta g_n(\sigma,\omega) ) ,\sigma,\omega),\;\; s\in\R.$$
Then, choosing $E$ such that  $EJ=-E$, we obtain that 
the measure  $\gamma_{n+1,n+1}^t$ is  invariant by the flow 
$$(x,\sigma,\omega) \mapsto  \left(x+s({\rm Hess}\,\lambda_n(\sigma) \omega+\nabla_\eta g_n(\sigma, \omega)),\sigma, \omega\right),\;\; s\in\R.$$
From assumption~\textbf{H3'}, we deduce $\gamma^t_{n,n}=0$ and $\gamma^t_{n+1,n+1}=0$, and the positivity of $\gamma^t$  implies that $\gamma^t=0$.  One argues similarly when $q>2$, and proves that the term  in $g_n$ does not contribute to the limit.

\subsection{The two-scale semiclassical  measures  coming from  finite distance}\label{sec:finitedist2}
Here again, we write the proof for $q=2$. 
We choose $\theta\in L^1(\R^d)$, $a\in{\mathcal C}_0^\infty(\R^{d}\times W\times \R^d,\C^{2\times 2} )$ where $W$ is a tubular neighbrohood of $Y$ where the function $\sigma_Y$ is defined. Using the homogeneity of the function $g(\xi,\eta)$, we have 
\begin{equation}
\label{eq:toto37}
i\frac d{dt}
\left({\rm op}_\eps(a_\eps) \Psi^\eps(t),\Psi^\eps(t)\right) = I^\eps_1(t)+I^\eps_2(t)
\end{equation}
with
\begin{align*}
I^\eps_1(t)&= \left([ {\rm op}_\eps(a_\eps),\eps^{-2} \lambda_n(\eps D_x) +  V_{\rm ext}(t,x) ] \Psi^\eps(t) ,\Psi^\eps(t)\right)\\
I^\eps_2 (t)&=- 
\left( \left( {\rm op}_\eps(a_\eps) J (g_n)_\eps (\eps D)- (g_n)_\eps(\eps D)J {\rm op}_\eps(a_\eps) \right) \Psi^\eps(t) ,\Psi^\eps(t)\right).
\end{align*} 
Note that if $q>2$, the homogeneity implies $I^\eps_2 (t)= O(\eps^{q-2})$. 
 Section~5.1 in~\cite{CFM2}  gives
the uniform boundedness of the family of time dependent functions
 $t\mapsto I^\eps_1(t)$ and 
 Lemma~\ref{lem:gepsfinite}  yields the uniform boundedness of the family of time dependent functions $t\mapsto I^\eps_2(t)$. Therefore,  the left-hand side of~\eqref{eq:toto37}
 is uniformly bounded with respect to $\eps$. Therefore, the maps $t\mapsto M^t(\sigma, v) d\nu^t(\sigma,v)$ defined on $TY$  will be continuous in time.

\begin{remark}\label{rem:tata89}
At that level of the proof, one sees that by Ascoli theorem, one can find for each $T>0$ a sequence $\eps_\ell$ for which  the limit of $\left({\rm op}_{\eps_\ell}(a_{\eps_\ell}) \Psi^{\eps_\ell}(t),\Psi^{\eps_\ell}(t)\right)$ exists for all $t\in \R$. One then deduces the convergence $t$ by $t$ of these quantities. 
\end{remark}

We now  integrate equation~\eqref{eq:toto37} against a function $\theta$ and pass to the limit $\eps\rightarrow 0$. By Section~5.1 in~\cite{CFM2},
  we have for $\theta\in \mathcal C_0^\infty(\R)$, up to the subsequence defining $M^td\nu^t$
\begin{align*}
&\int\theta(t) I^\eps_1(t) dt \Tend {\eps}{0}
\\
&\;\int_\R \theta(t)  \int_{TY} {\rm Tr}_{L^2(N_\sigma Y,\C^2)} \Biggl(
 \left[ Q^Y_a(\sigma, v) , \frac 12 {\rm Hess} \lambda_n (\sigma)\,D_z\cdot D_z+m_{V_{\rm ext} (t,\cdot )}(x,v)\right]  M^t(\sigma,v) \Biggr)\nu^t(\sigma,v)dt.
\end{align*}
Besides, by
 Lemma~\ref{lem:gepsfinite} for studying the term~$I^\eps_2$.  
\begin{align*}
&\int\theta(t) I^\eps_2(t) dt \Tend {\eps}{0}\\
&\;
\int_\R \theta(t)  \int_{TY} {\rm Tr}_{L^2(N_\sigma Y,\C^2)}
 \left(  Q^Y_a(\sigma,v) J Q^Y_{g_n}(\sigma)-Q^Y_{g_n}(\sigma)JQ_a^Y(\sigma, v)\right)
 M^t(\sigma,v) \Biggr)\nu^t(\sigma,v)dt.\\
 &\qquad = \int_\R \theta(t)  \int_{TY} {\rm Tr}_{L^2(N_\sigma Y,\C^2)}
 \left(  \left[ Q^Y_a(\sigma,v) \;,\; J Q^Y_{g_n}(\sigma)\right] 
 M^t(\sigma,v) \right)\nu^t(\sigma,v)dt
\end{align*}
Reporting the result in~\eqref{eq:toto37}, we obtain 
\begin{align*}
-i\int_\R \theta'(t) &\int_{TY} {\rm Tr}_{L^2(N_\sigma Y,\C^2)} (Q^Y_a(\sigma,v)M^t(\sigma,v)) d\nu^t(\sigma,v)
 dt = \int_\R \theta(t)  \int_{TY} 
 {\rm Tr}_{L^2(N_\sigma Y,\C^2)} \Biggl(\\
& \biggl[ Q^Y_a(\sigma, v) 
  ,\left( \frac 12 {\rm Hess} \lambda_n (\sigma)\,D_z\cdot D_z+m_{V_{\rm ext} (t,\cdot )}(x,v)\biggr){\rm Id} 
 + J Q^Y_{g_n}(\sigma)
 \right]
 M^t(\sigma,v) \Biggr)\nu^t(\sigma,v)dt.
\end{align*}
We deduce 
$$\partial_t (M^t d\nu^t)=  \left[ \left( \frac 12 {\rm Hess} \lambda_n (\sigma)\,D_z\cdot D_z+m_{V_{\rm ext} (t,\cdot )}(x,v)\right)\,{\rm Id}
+ Q^Y_{g_n}(\sigma) J
\;,\; M^t d\nu^t\right]
 d\nu^t.$$
Taking the trace of this expression gives $\partial_t\nu^t=0$, whence $\nu^t=\nu^0$ (because of the continuity of $t\mapsto \nu^t$),  and the equation satisfied by $M^t$.


\section{Proof of the main Theorems}\label{sec:proofs}

\subsection{Proofs of Theorem~\ref{theorem0} and Proposition~\ref{prop:special_case}}
We prove here the results obtained under {\bf H1}, {\bf H2} and {\bf H3}, which corresponds to a general setting without too much assumptions on the initial data and with generic hypothesis on the Bloch energies. 

\begin{proof}[Proof of Theorem~\ref{theorem0}]
Let $(\eps_\ell)$ be a sequence given by Proposition \ref{prop:sum} and $\varsigma^t$ and $\mu^t_{n,n'}$ the corresponding semi-classical measures along that sequence. Because of  the assumption {\bf H1} and  Part (2) of Proposition~\ref{prop:easysupp}, $\mu^t_{n,n'}=0$ for a.e. $t\in\R$ as soon as $|n-n'|>1$. Besides, by~{\bf H2}, we can use Theorem~\ref{theorem43} to determine $\mu^t_{n,n}{\bf 1}_{\Lambda_n}$. Finally,  by~{\bf H3} and Theorem~\ref{theo:2mic1}, $\mu_{n,n}^t= \mu^t_{n,n}{\bf 1}_{\Lambda_n}$ and the result follows: we obtain that for a subsequence $\eps_\ell$
 for $a,b\in\R$ and $\varphi\in\mathcal C_0^\infty(\R^{d})$,
 $$\int_a^b\int _{\R^d} \phi(x) |  \psi^{\eps_\ell}(t,x)|^2dx dt\Tend{\eps_\ell}{0} \sum_{n\in\N_0}
\int_a^b \int_{T^*X}\Tr_{L^2(N_\sigma \Lambda_n}(m_\phi^{\Lambda_n}(\sigma)M_n^t(\sigma,v))\nu_n(d\sigma,dv)dt$$
once observed that for $a(x,\xi):= \phi(x)$, the operator $Q_a(\sigma,v)$ coincides with $m_\phi^{\Lambda_n}(\sigma)$. 
 \end{proof}
 
 \begin {proof}[Proof of Proposition~\ref{prop:special_case}]
 For the data considered in that statement, one has $U^\eps_0= \varphi_{n_0}(y,\eps D_x) u^\eps_{n_0}$. Therefore, $M_n=0$ for $n\not=n_0$ and $M_n^t$ too. We then focus on calculating $M_{n_0}$ above any $\xi\in \Lambda_{n_0}$. By Corollary~\ref{cor:titi} (1), since $\xi$ is an isolated point of $\Lambda_{n_0}$,   the measure $\nu_{n_0}$ is given by 
 \[
 \nu_{n_0}(d\xi)= \| v_{n_0}\|_{L^2}^2
 \sum_{j\in 2\pi \Z^d} |c_{n_0} (\xi_{n_0}+ j)|^2 \delta(\xi- \xi_{n_0}-j)
 \] and, for $j\in 2\pi\Z^d$, the operator~$M_{n_0}(\xi_{n_0} +j)$ is the projector on $\C v_{n_0}$. As a consequence,  
 the solution~$M^t(\xi_{n_0} +j)$ of~\eqref{eq:heis} is the orthogonal projection on~$\C \psi^{\xi_{n_0}}(t)$ where $\psi^{\xi_{n_0}}$ satisfies~\eqref{eq:tata}. We obtain that for any $\phi\in\mathcal C_0^\infty (\R^d)$,
 \begin{align*}
 \int _{\Lambda_{n_0}} {\rm Tr}_{L^2(\R^d)}m_\phi^{\Lambda_{n_0}} (M^t_{n_0}(\xi))\nu_{n_0} (d\xi) 
 &= \left(\int_{\R^d} \phi(x) | \psi^{\xi_{n_0}}(t,x)|^2 dx \right)\sum_{j\in\Z^d} |c_{n_0} (\xi_{n_0}+ j)  |^2\\
 &=\int_{\R^d} \phi(x)| \psi^{\xi_{n_0}}(t,x)|^2dx 
  \end{align*}
 where we have used that $N\Lambda_{n_0}=\R^d$, $m^{\Lambda_{n_0}}_\phi$ is the operator of multiplication by $\phi$, and the  conservation of $L^2$ norms for the equation~\eqref{eq:tata} ($\| v_{n_0}\|_{L^2}=\| \psi^{\xi_{n_0}}(t)\|_{L^2}$ for all $t\in\R$).
 \end{proof}

\subsection{Proof of Theorem~\ref{theo:interaction} and Proposition~\ref{prop:special_cas2}} We focus here on degenerate crossings involving two energies isolated from the remainder of the spectrum and well-prepared data that concentrate on these modes.  We  will indeed prove a more general result than Theorem~\ref{theo:interaction}, assuming that $\Sigma_n$ is  included in $\Lambda_n\cup \Lambda_{n+1}$ but not necessarily equal to $\Lambda_n$ and $\Lambda_{n+1}$ (the latter being non necessarily equal). Thus, one has to take into account the additional contributions to the energy densities generated by the points of $(\Lambda_n\cup \Lambda_{n+1})\setminus \Sigma_n$ and Theorem~\ref{theo:interaction} is a straightforward corollary of the next result. 

\begin{theorem}\label{theo:general2cross'}
Then, there exists a subsequence $\eps_\ell\Tend{\ell}{+\infty} 0$, three non negative measures $\nu_n\in\mathcal M^+(T^*\Lambda_n) $, $\nu_{n+1}\in\mathcal M^+(T^*\Lambda_{n+1})$ and $\nu^0\in\mathcal M^+(\Sigma_n)$ depending on  $(\psi^{\eps_\ell}_0)$,  three measurable trace-class operators $M_n$, $M_{n+1}$ and  $M$
\begin{align*}
M_n:\,&T^*_\xi\Lambda_n\ni (\xi,v)  \mapsto M_n (\xi,v)  \in\mathcal L^1_+(L^2(N_\xi \Lambda_n)),\;\;{\rm Tr} _{L^2(N_\xi \Lambda_n)}M_n (\xi,v)  =1\; d\nu_n \; a.e.\\
M_{n+1}:\,&T^*_\xi\Lambda_{n+1}\ni (\xi,v) \mapsto M_{n+1}(\xi,v)\in\mathcal L^1_+(L^2(N_\xi \Lambda_{n+1})),\;\;{\rm Tr} _{L^2(N_\xi \Lambda_{n+1})}M_{n+1} (\xi,v) =1\; d\nu_{n+1} \; a.e. \\
M:\,&T^*_\xi\Sigma_n\ni (\xi,v) \mapsto M(\xi,v) 
\in\mathcal L^1_+(L^2(N_\xi \Sigma_n,\C^2)),\;\;{\rm Tr} _{L^2(N_\xi \Sigma_n,\C^2)}M (\xi,v)  =1\; d\nu^0\; a.e.
\end{align*}
such that for every $a<b$ and every $\phi\in{\mathcal C}_0(\R^d)$ one has
\begin{align*}
\lim_{\ell\rightarrow +\infty}& \int_a^b\int_{\R^d}  \phi(x) |\psi^{\eps_\ell} (t,x)|^2 dx dt
  \\
& =\sum_{j=n,n+1} \int_a^b \int_{T^*(\Lambda_j\setminus\Sigma_n)}{\rm Tr} _{L^2(N_\xi \Lambda_j)}[m^{\Sigma_n}_\phi(\xi,v)M^t_j (\xi,v) ] \nu_j(d\xi,dv)dt\\
& + \int_a^b \int_{T^*\Sigma_n}{\rm Tr} _{L^2(N_\xi \Sigma_n, \C)}
[m^{\Sigma_n}_\phi(\xi,v)
(m_n^t + m_{n+1}^t +2 {\rm Re} (m_{n,n+1}^t ))(\xi,v) ] \nu^0(d\xi, dv)dt ,
 \end{align*}
 where 
  \begin{equation*}
M^t(\xi,v) = \begin{pmatrix} m_n^t(\xi,v) & m_{n,n+1}^t(\xi,v) \\ m_{n,n+1}^t(\xi,v)^* & m_{n+1}^t(\xi,v)\end{pmatrix}
\end{equation*}
is a non negative trace class operators on $L^2( N_\xi \Sigma_n,\C^2) $. 

Besides,
 the map $t\mapsto M^t_n(x,\xi)\in\mathcal C(\R, \mathcal L^1_+(L^2(N_\xi \Lambda_n))$ solves the von Neumann equation~\eqref{eq:heis} and similarly for $M^t_{n+1}$ and $\varrho_{n+1}$, 
 and the map
 $t\mapsto M^t (\xi,v)\in\mathcal C(\R, \mathcal L^1_+(L^2(N_\xi \Sigma_n,\C^2))$ 
 solves~\eqref{ed:heis2bis} if~$q>2$  and~\eqref{ed:heis2} if $q=2$. All the 
 initial data  depend on $(\psi^{\eps_\ell}_0)$ as in Remark~\ref{rem:M0}.
\end{theorem}

\begin{proof} 
We have
$$U^\eps_0(x,y)=\varphi_n(y,\eps D_x) u^\eps_{n}(x)+ \varphi_{n+1}(y,\eps D_x) u^\eps_{n+1}(x)$$
and we are going to take advantage of the fact that $U^\eps_0\in {\rm Ran } \,\Pi(\xi)$, the spectral projector on
$${\rm Ker}(P(\xi)-\varrho_n(\xi)) \oplus  {\rm Ker}(P(\xi)-\varrho_{n+1}(\xi)). $$
By assumption~{\bf H1'}, the band of the spectrum of $P(\xi)$ consisting of the pair $\{ \varrho_n(\xi), \varrho_{n+1}(\xi)\}$ is separated from the remainder of the spectrum by a gap, which implies that $\xi\mapsto \Pi(\xi)$ is analytic. We claim that a  consequence of this is that 
 if $(\eps_\ell)$ is a sequence given by Proposition \ref{prop:sum}, then 
\begin{equation}\label{step90}
\varsigma^t = \mu^t_{n,n}+\mu^t_{n+1,n+1} + \mu^t_{n,n+1} +\mu^t_{n+1,n}.
\end{equation}
By Proposition~\ref{prop:easysupp}, $\varsigma^t$ has only support above $\Lambda_n$ (because of $\mu^t_{n,n}$), $\Lambda_{n+1}$ (because of $\mu^t_{n,n+1}$)  and $\Sigma_n$ (because of the crossed terms).
Then, the result of Theorem~\ref{theo:interaction} comes from two observations: 
\begin{enumerate}
\item assumption {\rm\bf H2'}  allows to use Theorem~\ref{theorem43} to determine $\mu^t_{n,n}$ above $\Lambda_n\setminus \Sigma_n$ and  $\mu^t_{n+1,n+1}$ above  $\Lambda_{n+1}\setminus \Sigma_{n+1}$,
\item  assumption {\rm\bf H3'}  allows to use Theorem~\ref{theo:2mic2} to compute $\mu^t_{n,n}$, $\mu^t_{n+1,n+1}$, $\mu^t_{n,n+1}$ and $\mu^t_{n+1,n}$ above $\Sigma_n$  in terms of the coefficients of the matrix-valued measure $M^t d\nu^0$. In view of~\eqref{step90}, we obtain that for all $\phi\in\mathcal C_0^\infty(\R^d)$,
\begin{equation}\label{step91}
\int _{\R^d\times \Sigma_n} \phi(x) \varsigma^t(dx,d\xi) = 
\int _{T^*\Sigma_n} {\rm Tr} \left[ m_\phi^{\Sigma_n} (\xi,v) E M^t(\xi, v)\right] \nu^0  (d\xi,dv),\;\; E=\begin{pmatrix} 1 & 1 \\ 1 & 1  \end{pmatrix}
\end{equation}
\end{enumerate} 
It remains to discuss Equation~\eqref{step90}, which comes  from~\eqref{def:Leps} and the estimate 
$$\left\| (1-\Pi (\eps D_x)) U^\eps(t)\right\|_{H^s_\eps (\R^d\times \T^d)}\leq \eps \, C_s (1+|t|)$$
for $s>d/2$.
We observe that the family  
 $$W^\eps (t,x)= (1-\Pi)(\eps D_x) U^\eps(t,x).$$
satisfies the system 
$$i\eps^2\partial_t W^\eps =P(\eps D_x) W^\eps+\eps^2 V_{\rm ext} W^\eps +\eps^3 G^\eps,\;\; W^\eps(0)=0$$
with $G^\eps(t) =- \eps^{-1} \left[\Pi(\eps D_x), V_{\rm ext}(t)\right] U^\eps(t)$  uniformly bounded in $L^2(\R^d\times \T^d)$. 
Therefore, when $s=0$, the estimate comes from an energy argument.  We then proceed as in Lemma~6.7 in~\cite{CFM2} by induction in $s\in\N$ and   interpolation between $s$ and $s+1$,  observing that, in view of  Remark~\ref{rem:eqnorm}, it is enough to prove that 
$P(\eps D_x)^{s/2}W^\eps$ and $ \langle \eps D_x \rangle ^s W^\eps$ go to~$0$ in $L^2(\R^d\times \T^d)$.
\end{proof}

As a by-product of the proofs, we have the following remark:

\begin{remark}\label{rem:tata99}
In view of  Remark~\ref{rem:tata89}, 
 for all $T>0$, there exists  a sequence $\eps_\ell$ for which  one has for any $\phi\in\mathcal C^\infty(\R^d)$,
\begin{equation}\label{ineq:tata}
 \int_{\R^d}\phi(x)  |\psi^\eps(t,x)|^2 dx \geq \int_{T^*\Sigma_n}  {\rm Tr}_{L^2(N_\xi\Sigma_n)}
\left( m^{\Sigma_n}_\phi(\xi,v) (m_n^t+m^t_{n+1} +2{\rm Re}\,  m^t_{n,n+1} )(\sigma,v)\right) \nu^0(d\xi,dv).
\end{equation}
Let us assume 
\[
\int_{T^*\Sigma_n}  {\rm Tr}_{L^2(N_\xi\Sigma_n)}
\left( m_n^0+m^0_{n+1} +2{\rm Re}\,  m^0_{n,n+1} \right) d\nu^0=\| \psi^\eps_0\|_{L^2}
\]
then
$\displaystyle{
\int_{T^*\Sigma_n}  {\rm Tr}_{L^2(N_\xi\Sigma_n)}
\left( m_n^t+m^t_{n+1} +2{\rm Re}\,  m^t_{n,n+1} \right)  d\nu=
|\psi^\eps(t,x)|^2 dx}$,
and the inequality~\eqref{ineq:tata}  becomes  an equality. One has obtained  a $t$ by $t$  description of the limit of the energy density. 
The same observation holds in the frame of Theorem~\ref{theorem43}. 
\end{remark}

It remains to prove Proposition~\ref{prop:special_cas2}.

\begin{proof}[Proof of Proposition~\ref{prop:special_cas2}]
According to the assumptions, we have  $\Lambda_n=\Lambda_{n+1}=\Sigma_n$. Therefore, we only have to compute $M d\nu^0$ and solve the  von Neumann equations defining $M^t$ in both studied cases. 
For the data considered in that statement, one has 
$\psi^\eps_0=\psi^\eps_{0,n}+\psi^\eps_{0,n+1}$ (where the latter families are defined in~\eqref{eq:tata}) with $\xi_n=\xi_{n+1}$.
Therefore, by Corollary~\ref{cor:titi}, 
$\nu^0$ is given by 
 \[
 \nu^0(d\xi)= (\| v_{n}\|_{L^2}^2+ v_{n+1}\|_{L^2}^2)
 \sum_{j\in 2\pi \Z^d} |c_{n} (\xi_{n}+ j)|^2 \delta(\xi- \xi_{n}-j)
 \] and, for $j\in 2\pi\Z^d$, the operator~$M^0(\xi_{n} +j)$ is the projector on $\C \, ^t(v_{n},v_{n+1})$.
 The solution of the Heisenberg equations~\eqref{ed:heis2bis} and~\eqref{ed:heis2} then are orthogonal projectors on
 $\C \, ^t(\psi^{\xi_n}_{n},\psi^{\xi_n}_{n+1})$ as defined in the statement of Proposition~\ref{prop:special_cas2}.
 In view of~\eqref{step91} and of $m_\phi^{\Sigma_n} = \phi(x)$, we conclude 
 for $\phi\in\mathcal C_0^\infty (\R^d)$,
 \begin{align*}
 \int _{\Sigma_{n}} {\rm Tr}_{L^2(\R^d,\C^2)} (m_\phi^{\Sigma_{n}}  E M^t(\xi))\nu^0 (d\xi) 
 &= \left(\int_{\R^d} \phi(x) | \psi^{\xi_{n}}_n(t,x) +\psi^{\xi_n}_{n+1} (t,x)|^2 dx \right)\sum_{j\in\Z^d} |c_{n_0} (\xi_{n_0}+ j)  |^2\\
 &=\int_{\R^d} \phi(x)| \psi^{\xi_{n}}_n(t,x) +\psi^{\xi_n}_{n+1} (t,x)|^2dx 
  \end{align*}
\end{proof}

\subsection{Proof of Theorem~\ref{theorem:1d}} \label{sec:proof1d}
 By Lemma \ref{lem:critb}, the  Bloch energies $\varrho_n$ have only non-degenerate critical points and $\Lambda_n\subset \pi\Z$. Besides, they are smooth outside the set of crossing points $\Sigma_n=\pi\Z\setminus\Lambda_n$, that are all conical. Therefore, the assumptions of Theorem~\ref{theorem0} are satisfied and 
 \[
\varsigma^t=\sum_{n\in I_n}\mu^t_{n,n}. 
\]
with $\mu^t_{n,n}$ determined by the pairs  $M^t_n) \nu^t_n$. It remains to characterize the pairs $(M^t_n,\nu_n)$ that are associated with the discrete sets $\Lambda_n$. For this reason, 
$T^*\Lambda_n=\Lambda_n\times\{0\}$ and $N\Lambda_n = \R^d$, the measure $\nu^t_n$ is a sum of Dirac masses and the operator $M^t_n$ is constant and an orthogonal projector on a function $\psi _\xi^{(n)}$ that has to satisfy~\eqref{eq:schrohprofil} since $M^t_n$ satisfies~\eqref{eq:heis} (see also Corollary~1.4 in~\cite{CFM2}).

\subsection{Extension of the setting to more general situations}\label{sec:extension}

Our results could be formulated differently by assuming that the initial data is localised in Fourier variables on a set
 $\Omega$ in which the assumptions \textbf{H1}, \textbf{H2} and \textbf{H3}   of Theorem~\ref{theorem0}  are satisfied. More precisely, we assume that $\Omega$ is a  $\Z^d$-periodic open subset of $\R^d$ such that there exists a unit cell ${\mathcal B}$ of $\Z^d$ for which $\Omega\cap \mathcal B$ is strictly included in $\mathcal B$.
We prove here that the analysis of the semi-classical measure $\varsigma^t$ of $(\psi^\eps)_{\eps>0}$  in  $\R\times \R^d\times\Omega$  can be performed by localizing the initial data $\psi^\eps_0$, which allows to extend the results of Theorem~\ref{theorem0} to data with less strict assumptions on the Bloch energies. 

\begin{lemma}\label{lem:Ueps}
Let  $\Omega$ as above 
and $\chi\in{\mathcal C}^\infty(\R^d)$ be $2\pi\Z^d$-periodic, supported in the interior of $\mathcal B+2\pi\Z^d$ and equal to 1 on $\Omega$.
Let $U_\chi^\eps(t)$ be the solution of equation~\eqref{eq:U} with initial data $\chi(\eps D) U^\eps_0$. 
Then, for every $s\geq 0$ there exists a constant $C_s>0$ such that, for all $t\in\R$,
$$\left\| U^\eps_\chi (t)-\chi(\eps D_x) U^\eps(t)\right\|_{H^s_\eps (\R^d\times \T^d)}\leq \eps \, C_s (1+|t|).$$ 
Moreover, if $\psi^\eps_0=L^\eps U^\eps_0$, then there exist $C>0$ such that
\[
\left\| L^\eps U_\chi^\eps(t)-\chi(\eps D_x) \psi^\eps(t)\right\|_{L^2 (\R^d)}\leq \eps \, C (1+|t|).
\]
\end{lemma}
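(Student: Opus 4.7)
The plan is to set $W^\eps(t):=U^\eps_\chi(t)-\chi(\eps D_x)U^\eps(t)$, note that $W^\eps(0)=0$, derive its evolution equation, and close by an energy estimate of the same type that yields~\eqref{eq:wpxy}. Since $\chi(\eps D_x)$ is a Fourier multiplier in $x$ only, whereas $P(\eps D_x)$ acts on $\R^d_x\times\T^d_y$ as a Fourier multiplier in $x$ tensored with differential operators in $y$, the two operators commute. Only the multiplication by $V_{\rm ext}(t,x)$ produces a nontrivial contribution, so $W^\eps$ satisfies
\[
i\eps^2\partial_t W^\eps = P(\eps D_x)W^\eps + \eps^2 V_{\rm ext}(t,\cdot)W^\eps + \eps^2\, G^\eps(t),\qquad W^\eps(0)=0,
\]
with $G^\eps(t):=[V_{\rm ext}(t,\cdot),\chi(\eps D_x)]U^\eps(t)$.

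Next I would bound $G^\eps$ by $C\eps(1+|t|)$ in $H^s_\eps(\R^d\times\T^d)$. Since $\chi$ is smooth and $2\pi\Z^d$-periodic, it admits a rapidly converging Fourier series $\chi(\xi)=\sum_{k\in\Z^d}\widehat{\chi}_k\exp{ik\cdot\xi}$, giving $\chi(\eps D_x)=\sum_{k}\widehat{\chi}_k\,T_{\eps k}$ with $T_a f(x):=f(x+a)$. A direct computation yields $[T_{\eps k},V_{\rm ext}(t,\cdot)]f(x)=(V_{\rm ext}(t,x+\eps k)-V_{\rm ext}(t,x))T_{\eps k}f(x)$, whose $H^s_\eps$-operator norm is $O(\eps|k|)$ thanks to the boundedness of the derivatives of $V_{\rm ext}$. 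Summing against the rapidly decaying coefficients $\widehat{\chi}_k$ then gives $\|G^\eps(t)\|_{H^s_\eps}\le C_s\eps\|U^\eps(t)\|_{H^s_\eps}\le C_s\eps(1+|t|)$, the last inequality by~\eqref{eq:wpxy}. The first conclusion then follows from an inhomogeneous variant of~\eqref{eq:wpxy}: the argument of~\cite[Lemma~6.5]{CFM2} applied to the Duhamel representation of $W^\eps$ yields $\|W^\eps(t)\|_{H^s_\eps}\le C_s\eps(1+|t|)$ after a Gronwall step absorbing the order-$\eps|t|$ terms generated by the additive growth in~\eqref{eq:wpxy}.

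For the second assertion the key observation is that $L^\eps$ and $\chi(\eps D_x)$ commute on $H^s_\eps(\R^d\times\T^d)$. Indeed, writing $F(x,y)=\sum_{k\in\Z^d}F_k(x)\exp{i2\pi k\cdot y}$, one has $L^\eps F(x)=\sum_k F_k(x)\exp{i2\pi k\cdot x/\eps}$, whose Fourier transform at $\xi$ is $\sum_k\widehat{F_k}(\xi-2\pi k/\eps)$. Applying $\chi(\eps D_x)$ multiplies this by $\chi(\eps\xi)$; applying $L^\eps$ after $\chi(\eps D_x)$ yields instead $\sum_k \chi(\eps\xi-2\pi k)\widehat{F_k}(\xi-2\pi k/\eps)$, and the two expressions agree by the $2\pi\Z^d$-periodicity of $\chi$. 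Consequently $\chi(\eps D_x)\psi^\eps(t)=\chi(\eps D_x)L^\eps U^\eps(t)=L^\eps\chi(\eps D_x)U^\eps(t)$, so that $L^\eps U^\eps_\chi(t)-\chi(\eps D_x)\psi^\eps(t)=L^\eps W^\eps(t)$, and the bound follows from~\eqref{def:Leps} applied with $s>d/2$. The main technical obstacle I anticipate is making the inhomogeneous energy estimate of the previous paragraph fully rigorous in the presence of the mildly growing factor $(1+|t|)$, but this should only require a modest adaptation of the argument already used in~\cite{CFM2} to prove~\eqref{eq:wpxy}.
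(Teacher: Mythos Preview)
Your proof is correct and follows essentially the same route as the paper: derive the equation for the difference $W^\eps$, identify the commutator $[\chi(\eps D_x),V_{\rm ext}]$ as the sole forcing term of size $O(\eps)$, close by an energy argument, and then use the commutation $[L^\eps,\chi(\eps D_x)]=0$ (a consequence of the $2\pi\Z^d$-periodicity of $\chi$) together with~\eqref{def:Leps} for the second assertion. The only notable technical difference is in how the $H^s_\eps$ bound is obtained: the paper first treats $s=0$ by a direct $L^2$ energy estimate and then upgrades to general $s$ via the equivalence of Remark~\ref{rem:eqnorm} and an induction/interpolation argument modeled on \cite[Lemma~6.7]{CFM2}, whereas you bound the commutator directly in $H^s_\eps$ by expanding $\chi$ in Fourier series and using that each $[T_{\eps k},V_{\rm ext}]$ is multiplication by a function whose $W^{s,\infty}$ norm is $O(\eps|k|)$; your approach is more explicit but both are valid and lead to the same conclusion.
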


\begin{proof}[Proof of Lemma~\ref{lem:Ueps}]
Note that we have $U^\eps_\chi(0)=\chi(\eps D_x) U^\eps_0$. 
We observe that $\widetilde U^\eps = \chi(\eps D_x) U^\eps$ satisfies the system 
$$i\eps^2\partial_t \widetilde  U^\eps =P(\eps D_x)\widetilde  U^\eps+\eps^2 V_{\rm ext}\widetilde  U^\eps +\eps^3 F^\eps,$$
with $F^\eps(t) = \eps^{-1} \left[\chi(\eps D_x), V_{\rm ext}(t)\right] U^\eps(t)$ is uniformly bounded in $L^2(\R^d\times \T^d)$. A standard energy estimate then gives the result for $s=0$. Then, in view of  Remark~\ref{rem:eqnorm}, it is enough to prove that 
$P(\eps D_x)^{s/2}(\widetilde U^\eps-U^\eps_\chi)$ and $ \langle \eps D_x \rangle ^s (\widetilde U^\eps-U^\eps_\chi)$ go to~$0$ in $L^2(\R^d\times \T^d)$. We proceed by induction in $s\in\N$ and   interpolation between $s$ and $s+1$,  following the arguments of the proof of  Lemma~6.7 in~\cite{CFM2}. This proves the first estimate of the lemma.
\smallskip 

To prove the second estimate, note that whenever $\chi$ is $2\pi\Z^d$-periodic, we have 
$$\chi(\eps D) \left({\rm e}^{\frac{i}{\eps}k\cdot x}\cdot\right)= {\rm e}^{\frac{i}{\eps}k\cdot x} \chi(\eps D+k)= {\rm e}^{\frac{i}{\eps}k\cdot x}\chi(\eps D)$$ 
for $k\in 2\pi\Z^d$. Thus $[\chi(\eps D), L^\eps]=0$ where $L^\eps$ is the operator defined in~\eqref{def:Leps}. We deduce that
$$\chi(\eps D) \psi^\eps (t)= \chi(\eps D) L^\eps U^\eps (t) = L^\eps \chi(\eps D) U^\eps (t).$$
Therefore, combining \eqref{def:Leps} and the previous estimate, finishes the proof of the lemma.
\end{proof}

\appendix

\section{One dimensional Bloch modes} \label{sec:1d}

We  review the main aspects of the theory that are needed here; the reader can refer to the books~\cite{McKean,RS} or the articles~\cite{Kuch16,McKeanTrubowitz,FK} among others for additional details. 

\smallskip

First of all, note that $\phi\in L^2(\T)$ solves $P(\xi)\phi=\lambda\phi$ for some $\xi,\lambda\in\R$ if and only if $f(y,\lambda):={\rm e}^{i\xi y}\phi(y)$ is a solution to:
\begin{equation} \label{eq:ODE} 
-{1\over 2} \partial_y^2 f(y,\lambda) +V_{\rm per}(y) f(y, \lambda)=\lambda f(y, \lambda), \quad y\in\R,
\end{equation}
satisfying the periodicity condition
\begin{equation}\label{eq:bper}
f(1,\lambda)={\rm e}^{i\xi}f(0,\lambda).
\end{equation}
Given $\lambda\in\R$, the solutions of \eqref{eq:ODE} are linear combinations of two solutions $f_1(y,\lambda)$ and $f_2(y,\lambda)$ satisfying
$$f_1(0,\lambda)=\partial_y f_2(0,\lambda)=1,\;\; f_2(0,\lambda)= \partial_y f_1(0,\lambda)=0.$$
Define:
\[
M_\lambda(y):=\left( \begin{array}{cc}
f_1(y,\lambda) & f_2(y,\lambda) \\ 
\partial_y f_1(y,\lambda) & \partial_y f_2(y,\lambda) 
\end{array} \right);
\]
then the existence of a solution to \eqref{eq:ODE} satisfying \eqref{eq:bper} is equivalent to the fact that ${\rm e}^{i\xi}$ is an eigenvalue of $M_\lambda(1)$. One can check that $\det M_\lambda(y)=1$ for every $y, \lambda\in\R$; therefore, letting $\Delta(\lambda):=\Tr M_\lambda(1)$, we find that ${\rm e}^{i\xi}\in \Sp M_\lambda(1)$ if and only if:
\begin{equation}\label{eq:Delta}
\Delta(\lambda)=2\cos\xi.
\end{equation}
It can be shown that solutions to \eqref{eq:ODE} depend analytically on $\lambda$, and that moreover, $\Delta$ extends to an entire function of order $1/2$. The real solutions to equations $\Delta(\lambda)=\pm 2$ form infinite increasing  sequences $(a_i^\pm)$ that tend to infinity. 

\smallskip 

The following facts hold (the reader may find helpful to consult~\cite[Figure~1, p.~145]{McKeanTrubowitz} or~\cite[Section XIII.16]{RS}):
\begin{itemize}
\item The sequences $(a_i^\pm)$ are intertwined. More precisely, one has:
\begin{equation}\label{eq:sequence}
a_1^+<a_1^-\leq a_2^-< a_2^+\leq a_3^+< a_3^-\cdots,
\end{equation}
\item Let be $I_{2i-1} = (a_{2i-1}^+,a_{2i-1}^-)$ and $I_{2i} = (a_{2i}^-,a_{2i}^+)$. Then $I_i$ has non-empty interior and $\restr{\Delta}{I_i}$ is strictly decreasing for $i$ odd and strictly increasing for $i$ even. 
\item If $a_i^\sigma=a_{i+1}^\sigma$ for some $i\in\N,\,\sigma\in\{+,-\}$ then
$
\Delta^\prime(a_i^\sigma)=0.
$
\end{itemize}
These properties have important implications on the behavior of Bloch energies. For every $n\in\N$ the following hold.
\begin{enumerate}
\item[Fact 1] The $n^{{\rm th}}$ Bloch energy is the solution to $\restr{\Delta}{I_n}(\varrho_n(\xi))=2\cos\xi$.
\item[Fact 2] $\varrho_n$ is $2\pi\Z$-periodic (we knew this already), and moreover 
\[
\varrho_n(\xi)=\varrho_n(2\pi-\xi),\quad \forall\xi\in\R.
\]
\item[Fact 3] $\restr{\varrho_n}{[0,\pi]}$ is strictly increasing if $n$ is odd (resp. strictly decreasing if $n$ is even) and analytic in the interior of the interval. If it is differentiable at $\xi=0,\pi$ then necessarily $\varrho_n^\prime(\xi)=0$ and $\varrho_n$ is analytic around that point. 
\item[Fact 4] A crossing can happen only at two consecutive Bloch energies. Let $n\in\N$ be such that
\[
\Sigma_n:=\{\xi\in\R\,:\,\varrho_n(\xi)=\varrho_{n+1}(\xi)\}\neq\emptyset;
\]
then $\Sigma_n=\pi\Z\setminus 2\pi\Z$ if $n$ is odd, $\Sigma_n=2\pi\Z$ if $n$ is even. Moreover
\begin{equation}\label{eq:Deltavan}
\Delta^\prime(\varrho_n(\xi))=0,\quad \forall \xi\in \Sigma_n.
\end{equation}
\end{enumerate}

\begin{figure}[htpb!]
	\label{fig:hill_discriminant}
	\includegraphics[width=0.49\textwidth]{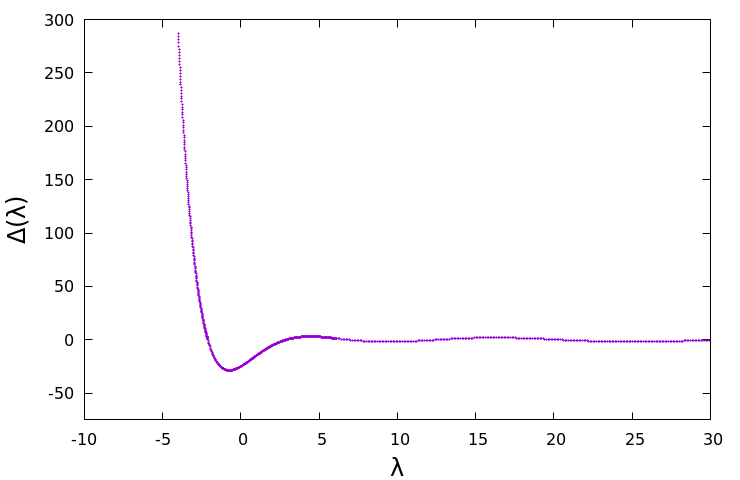} \
	\includegraphics[width=0.49\textwidth]{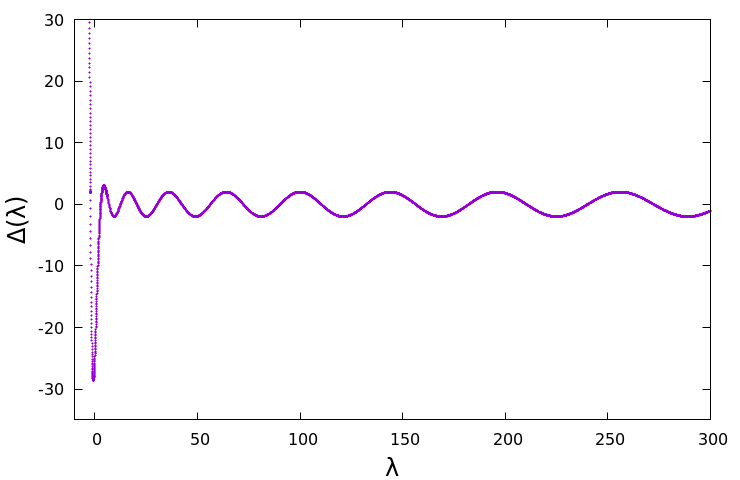} \
	
	\
	
	\includegraphics[width=0.49\textwidth]{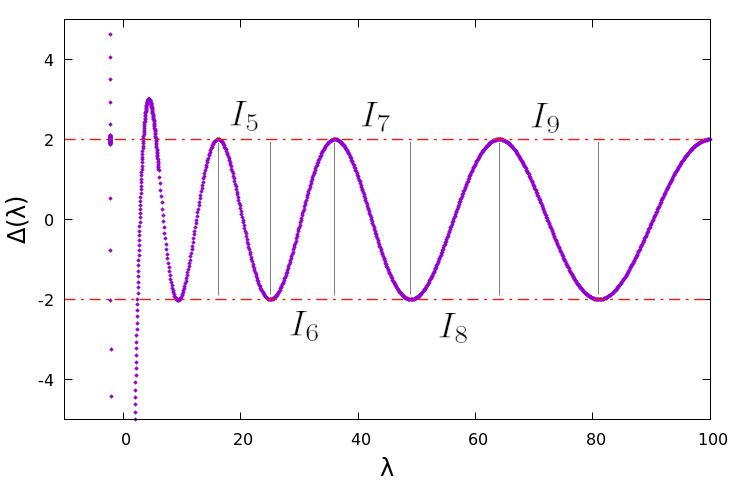} \
	\includegraphics[width=0.49\textwidth]{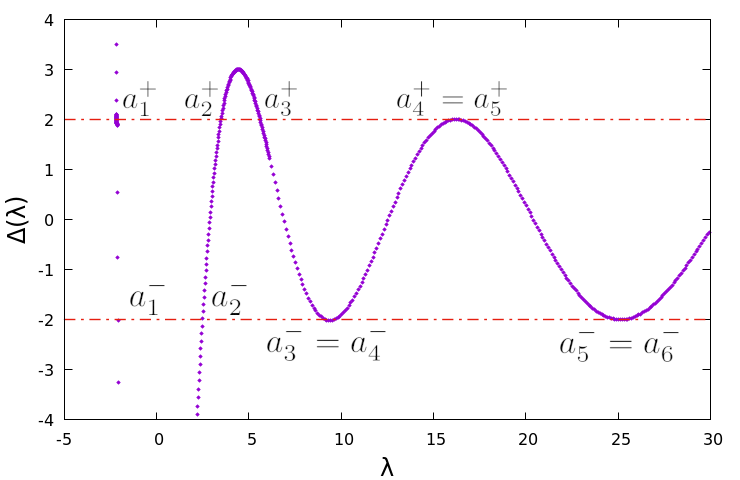}
	
	\caption{\footnotesize Hill's discriminant for $V_{\rm per}(y) = 5\cos(2y)$ numerically calculated. All graphics show the same set of data plotted with different ranges. In the first two it is possible to observe both divergence of $\Delta(\lambda)$ in the region $\lambda \approx \inf_y V_{\rm per}(y)$ and non-decaying oscillations of period $\sim \sqrt{\lambda}$ for $\lambda \longrightarrow \infty$. The third one illustrates how the intervals $I_i$, wherein solutions to \eqref{eq:ODE} are stable, are never empty, and why their borders $a_i^\pm$ must satisfy \eqref{eq:sequence}. Roots $a_i^+$ admit $\pi$-periodic solutions, and $a_i^-$ $2\pi$-periodic solutions; in each case, there are two linearly independent such solutions if and only if $a_i^\pm$ coincides with $a^\pm_{i+1}$ or $a^\pm_{i-1}$. If this is not the case, as for roots $a^\pm_1$, $a_2^\pm$ and $a^+_3$ seen in the last image, then the equation also admits a unstable solution.} A complete study of $\Delta(\lambda)$ in one dimension is found in \cite{magnus_winkler}.
\end{figure}

In addition, critical points of Bloch energies in the one dimensional case are never degenerate nor can occur at a crossing point.

\begin{lemma}\label{lem:critb}
The set of critical points of any Bloch energy $\varrho_n$ is contained in $\pi\Z$ and all the critical points are non-degenerate. Moreover, the crossing set $\Sigma_n$ associated with two consecutive Bloch energies $\varrho_n$ and $\varrho_{n+1}$ does not contain any critical points of the Bloch energies $\varrho_n$ and $\varrho_{n+1}$.
\end{lemma}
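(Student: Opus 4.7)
The plan is to exploit the defining identity $\Delta(\varrho_n(\xi)) = 2\cos\xi$ from item~(1) stated just before the lemma, together with the monotonicity and symmetry properties in items~(2)--(4). First, I would show $\Lambda_n \subseteq \pi\Z$. On the open interval $(0,\pi)$, $\varrho_n$ is real analytic, and differentiating the identity once gives
\[
\Delta'(\varrho_n(\xi))\,\varrho_n'(\xi) \;=\; -2\sin\xi,\qquad \xi\in(0,\pi).
\]
Since $\sin\xi\neq 0$ on $(0,\pi)$, this forces $\varrho_n'(\xi)\neq 0$ there. The symmetry $\varrho_n(\xi) = \varrho_n(2\pi-\xi)$ together with $2\pi$-periodicity propagates the same conclusion to all of $\R\setminus\pi\Z$, so $\nabla\varrho_n$ can vanish only at points of $\pi\Z$.

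Next, I would establish non-degeneracy at a critical point $\xi_0\in\pi\Z$ that is \emph{not} a crossing point. By item~(4), at such a point $\Delta'(\varrho_n(\xi_0))\neq 0$, and by item~(3), $\varrho_n$ extends analytically across $\xi_0$. Differentiating the identity twice and using $\varrho_n'(\xi_0)=0$ gives
\[
\Delta'(\varrho_n(\xi_0))\,\varrho_n''(\xi_0) \;=\; -2\cos\xi_0 \;=\; \mp 2 \;\neq\; 0,
\]
so $\varrho_n''(\xi_0)\neq 0$, establishing non-degeneracy.

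For the second claim, I would pick $\xi_0\in\Sigma_n$ and show $\xi_0\notin\Lambda_n\cup\Lambda_{n+1}$. Set $a:=\varrho_n(\xi_0)=\varrho_{n+1}(\xi_0)$; then $\Delta(a)=\pm 2$ and $\Delta'(a)=0$ by~\eqref{eq:Deltavan}, while the local monotonicity of $\Delta$ on the adjacent intervals $I_n$ and $I_{n+1}$ forces $a$ to be a strict local extremum of $\Delta$. Invoking the classical fact that endpoints of spectral gaps of Hill's equation are roots of $\Delta^2-4$ of multiplicity at most two (see~\cite{magnus_winkler}), one obtains $\Delta''(a)\neq 0$. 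Substituting the second-order Taylor expansions of $\Delta$ around $a$ and of $2\cos\xi$ around $\xi_0$ into $\Delta(\varrho_n(\xi))=2\cos\xi$ and extracting the square root produces the asymptotics
\[
\varrho_n(\xi) - a \;=\; -\alpha\,|\xi-\xi_0| + O((\xi-\xi_0)^2),\qquad \varrho_{n+1}(\xi) - a \;=\; +\alpha\,|\xi-\xi_0| + O((\xi-\xi_0)^2),
\]
with $\alpha=\sqrt{2/|\Delta''(a)|}\neq 0$, the opposite signs being dictated by the fact that $a$ is a local maximum of $\varrho_n|_{[0,2\pi]}$ and a local minimum of $\varrho_{n+1}|_{[0,2\pi]}$ (or vice versa). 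Both $\varrho_n$ and $\varrho_{n+1}$ thus have a genuine corner at $\xi_0$ with non-zero one-sided derivatives of opposite sign; consequently $\nabla\varrho_n(\xi_0)$ and $\nabla\varrho_{n+1}(\xi_0)$ do not exist, and $\xi_0\notin\Lambda_n\cup\Lambda_{n+1}$.

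The main technical obstacle is the non-vanishing of $\Delta''(a)$ at crossing values, which is the classical statement that the double roots of $\Delta^2-4$ for a Hill operator are never of higher multiplicity; I would invoke this from the standard references rather than re-deriving it. Everything else reduces to elementary calculus on the single-variable relation $\Delta\circ\varrho_n = 2\cos$.
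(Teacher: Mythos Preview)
Your argument is correct, but it is more elaborate than necessary, and in one place the justification is slightly misrouted. The paper's proof extracts everything from the single identity you already write down in your step~2: differentiating $\Delta(\varrho_n(\xi))=2\cos\xi$ twice and evaluating at any critical point $\xi_0=k\pi$ (where $\varrho_n$ is analytic by item~(3)) gives
\[
\Delta'(\varrho_n(k\pi))\,\varrho_n''(k\pi)=2(-1)^{k+1}\neq 0.
\]
This one equation yields \emph{both} $\varrho_n''(k\pi)\neq 0$ (non-degeneracy) and $\Delta'(\varrho_n(k\pi))\neq 0$. Combining the latter with \eqref{eq:Deltavan}, which says $\Delta'=0$ at crossing values, immediately rules out the possibility that a critical point is a crossing point. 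That is the entire proof in the paper.

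Two remarks on your version. First, in step~2 you invoke item~(4) to obtain $\Delta'(\varrho_n(\xi_0))\neq 0$ at a non-crossing point, but item~(4) only asserts the \emph{converse} implication (crossing $\Rightarrow$ $\Delta'=0$); what actually supplies $\Delta'\neq 0$ is the displayed second-derivative identity itself, so the conclusion survives but the citation is misplaced. Second, your step~3 --- the corner analysis via $\Delta''(a)\neq 0$ at double roots of $\Delta^2-4$ --- is valid and gives an independent proof that crossing points are conical (hence not differentiable, hence not in $\Lambda_n$), but it is not needed for the lemma: once you notice that the second-derivative identity forces $\Delta'\neq 0$ at every critical point, the disjointness of $\Lambda_n$ and $\Sigma_n$ follows without any appeal to the multiplicity-two fact for Hill's discriminant.
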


\begin{proof} The first assertion on the critical points is property (3) above, whereas the second follows from differentiating twice equation \eqref{eq:Delta} and evaluating at a critical point $\xi=k\pi$, $k\in\Z$ to get:
\[
\Delta^\prime(\varrho_n(k\pi))\varrho_n^{\prime\prime}(k\pi)=(-1)^{k+1}2.
\]
This relation also shows that $\Delta^\prime(\lambda)$ cannot vanish at $\lambda=\varrho_n(k\pi)$. Together with \eqref{eq:Deltavan} this shows that a critical point cannot be a crossing point.
\end{proof}

\begin{remark}
In the free case ($V_{\rm per}=0$) there is only a Bloch band of infinite multiplicity. More generally, it has been proved in~\cite{Borg} that the absence of spectral gap is equivalent to the periodic potential $V_{\rm per}$ being constant. 
\end{remark}

\section{The properties of the Bloch energies at crossing points}\label{app:crossings}

Here we present a normal form for the expression of two Bloch energies $\varrho_n(\xi)$ and $\varrho_{n+1}(\xi)$ close to 
the crossing set $\Sigma_n$ (defined as $\Sigma_n=\Sigma_{n,n+1}$ in~\eqref{def:Sigmann'}). 

\begin{lemma} \label{lem:structurevarrho}
Let $\sigma_0$ be a point in the crossing set $\Sigma_n$ of two consecutive Bloch energies $\varrho_n$ and $\varrho_{n+1}$ having neighborhood $U$ with the following properties:
\begin{enumerate}
\item[(i)] $\Sigma_n\cap U$ is a smooth manifold.
\item[(ii)] The multiplicities of $\varrho_n(\xi),\varrho_{n+1}(\xi)$ are constant on each connected component of  $U\setminus \Sigma_n$.
\item[(iii)] There exists $\delta_0>0$ such that for all $\xi\in U$, 
$$d\left(\{ \varrho_n(\xi),\varrho_{n+1}(\xi) \}, {\rm Sp} \, P(\xi) \setminus \{ \varrho_j(\xi), \;\;\varrho_j(\xi)=\varrho_n(\xi) \; or\; \varrho_j(\xi)=\varrho_{n+1}(\xi) \} \right)\geq \delta_0.$$
\end{enumerate}
Then, there exist $\Omega\subseteq U$, a neighborhood of $\sigma_0$ that is $2\pi\Z^d$-invariant, two functions $\lambda_n\in \cC^\infty(\Omega)$, $g_n\in\cC^\infty\left(\sqcup_{\xi\in\Omega} \left(\{\xi\} \times N_{\sigma_{\Sigma_n}(\xi)}\Sigma_n\right)\right)$, and a function $m\in L^\infty(U)$ which is constant on each connected component of $U$ such that 
\begin{align*}
\forall  \xi\in\Omega\setminus \Sigma_n, \;\;&\varrho_n(\xi) =\lambda_n(\xi)- 
g_n(\xi,\xi-\sigma_{\Sigma_n}(\xi)),\;\;
\varrho_{n+1}(\xi) = \lambda_n(\xi)+m(\xi)
g_n(\xi,\xi-\sigma_{\Sigma_n}(\xi)).
\end{align*}
Moreover, 
\begin{enumerate}
\item If  the crossing set $\Sigma_n$ is conical in $U$, then for all $\xi \in U $, the map $N_{\sigma(\xi)} \Sigma\ni\eta\mapsto g_n(\xi,\eta)$ is homogeneous of degree $1$ and $g_n(\sigma,\eta)\not=0$ when $(\sigma,\eta)\in N\Sigma_n$ with $\eta\not=0$,
\item If none of the points of $\Sigma_n$ are conical crossings in $U$, then there exists $\theta_n\in\cC^\infty(\R^d )$ such that $g_n(\xi,\eta)= |\eta|^2 \theta_n (\xi)$, which implies that $\varrho_n$, $\varrho_{n+1}\in \mathcal C^{1,1}(\R^d)$, 
\item 
If the multiplicities of $\varrho_n,\varrho_{n+1}$ are equal on $U\setminus \Sigma_n$ then $m=1$.
\end{enumerate}
\end{lemma}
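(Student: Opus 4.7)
The plan is to reduce the problem to a smooth $2\times 2$ Hermitian matrix via Kato's spectral projection, and then read the announced normal form off that matrix.

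First I would fix a contour $\gamma\subset\C$ separating the pair $\{\varrho_n(\xi),\varrho_{n+1}(\xi)\}$ from the rest of $\mathrm{Sp}\,P(\xi)$, whose existence on a neighbourhood of $\sigma_0$ is guaranteed by hypothesis~(iii), and form the Riesz projector $\Pi(\xi)=\frac{1}{2\pi i}\oint_\gamma(z-P(\xi))^{-1}\,dz$. Since $\xi\mapsto P(\xi)$ is an analytic family of type~(A), $\Pi$ is real-analytic; its rank is constant on each connected component of $U\setminus\Sigma_n$ by~(ii), and upper semicontinuity of the spectrum combined with~(i) forces the same constant rank $r$ across $\Sigma_n$ on a full neighbourhood $\Omega$ of $\sigma_0$. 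In the generic case $r=2$ (conclusion~(3)), a Kato frame $(\phi_1(\xi),\phi_2(\xi))$ of $\mathrm{Ran}\,\Pi(\xi)$ can be built over $\Omega$, reducing $P(\xi)|_{\mathrm{Ran}\,\Pi(\xi)}$ to a smooth Hermitian matrix
\[
A(\xi)=\bigl(\langle P(\xi)\phi_j(\xi),\phi_i(\xi)\rangle\bigr)_{i,j=1,2}.
\]

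Next I would isolate the sum and the difference of its eigenvalues: write $A=\lambda_n\,\mathrm{Id}+B$ with $\lambda_n=\tfrac12\mathrm{tr}\,A=\tfrac12(\varrho_n+\varrho_{n+1})\in\mathcal C^\infty(\Omega)$ and $B=\begin{pmatrix}a & c\\ \bar c & -a\end{pmatrix}$ traceless Hermitian with smooth entries. The eigenvalues of $A$ are $\lambda_n\pm\mathfrak{g}$ with $\mathfrak{g}(\xi)^2=a(\xi)^2+|c(\xi)|^2$ smooth and non-negative, so $\Sigma_n\cap\Omega=\{\mathfrak{g}=0\}$; the ordering $\varrho_n\le\varrho_{n+1}$ selects a sign on each connected component of $\Omega\setminus\Sigma_n$, giving the $L^\infty$ function $m$, and $m\equiv 1$ when both multiplicities stay equal to one, which yields~(3). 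To construct $g_n$, I would work in tubular coordinates around $\Sigma_n$, writing each $\xi$ near $\Sigma_n$ as $\sigma_{\Sigma_n}(\xi)+v(\xi)$ with $v(\xi)\in N_{\sigma_{\Sigma_n}(\xi)}\Sigma_n$. The non-negativity of $\mathfrak{g}^2$ forces its first normal derivatives to vanish on $\Sigma_n$, so two applications of Hadamard's lemma along the normal direction produce a smooth field of symmetric bilinear forms $Q(\xi)$ on $N_{\sigma_{\Sigma_n}(\xi)}\Sigma_n$ with
\[
\mathfrak{g}^2(\xi)=Q(\xi)\bigl(v(\xi),v(\xi)\bigr).
\]
Setting $g_n(\xi,\eta):=\sqrt{Q(\xi)(\eta,\eta)}$ gives a function homogeneous of degree $1$ in $\eta$ satisfying $g_n(\xi,v(\xi))=\mathfrak{g}(\xi)$. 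Under the conical hypothesis of Definition~\ref{def:con}, the normal Hessian $Q(\sigma)$ is positive-definite for every $\sigma\in\Sigma_n$, so $g_n$ is smooth off $\{\eta=0\}$ and strictly positive for $\eta\ne 0$; this yields claim~(1).

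The hard part is claim~(2). If no point of $\Sigma_n$ is conical, then $Q$ vanishes identically on $\Sigma_n$, forcing both $a$ and $c$ to vanish to order at least $2$ in the normal variable. I would then push the Hadamard expansion one step further to show that both entries factor through the \emph{same} isotropic scalar $|v|^2$, writing $a(\xi)=|v|^2\alpha(\xi)$ and $c(\xi)=|v|^2\gamma(\xi)$ with smooth $\alpha,\gamma$, whence $\mathfrak{g}(\xi)=|v|^2\theta_n(\xi)$ with $\theta_n=\sqrt{\alpha^2+|\gamma|^2}\in\mathcal C^\infty$, so that $g_n(\xi,\eta)=|\eta|^2\theta_n(\xi)$ and $\varrho_n,\varrho_{n+1}\in\mathcal C^{1,1}$. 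The delicate point is precisely this isotropic factorization: one has to use the rigidity coming from the whole connected manifold of non-conical crossings to rule out anisotropic fourth-order patterns such as $a^2+|c|^2\sim v_1^2v_2^2$, which would be smooth but would not provide the $\mathcal C^{1,1}$ square root of the claimed product form. Handling this, together with the non-generic cases $r>2$ appearing when multiplicities jump (which give the $L^\infty$ function $m$ not identically equal to one), is where the main technical work of the proof lies.
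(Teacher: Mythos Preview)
Your reduction via the Riesz projector and the splitting $A=\lambda_n\,\mathrm{Id}+A_0$ with $A_0$ trace-free Hermitian is exactly the paper's route. Two points deserve comment.

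\medskip

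\textbf{Factoring the matrix rather than the squared gap.} The paper does not work with $\mathfrak g^2=a^2+|c|^2$ and then take a square root. Instead it uses that the eigenvalues of a Hermitian trace-free matrix are homogeneous of degree~$1$ in its entries: one writes $g(\xi)=G(A_0(\xi))$ with $G$ a degree-$1$ homogeneous function on the space of trace-free Hermitian matrices, smooth away from~$0$. Since $A_0$ is analytic and vanishes exactly on $\Sigma_n$ (Hermitian with all eigenvalues zero there), it has a well-defined normal vanishing order~$q$; Taylor--Hadamard then gives $A_0(\xi)=T(\xi)\bigl[(\xi-\sigma_{\Sigma_n}(\xi))^{\otimes q}\bigr]$ with $T$ smooth, and one sets $g_n(\xi,\eta):=G\bigl(T(\xi)[\eta^{\otimes q}]\bigr)$. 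The degree-$q$ homogeneity in~$\eta$ is inherited directly from~$G$, and the conical case is precisely $q=1$. This is cleaner than your route through $\sqrt{Q(\eta,\eta)}$: no positivity of a normal Hessian is needed, and the argument works verbatim for any rank. For claim~(2) the paper factors $A_0=T^{(2)}(\xi)[(\xi-\sigma)^{\otimes 2}]$ and defines $\theta_n(\xi)=|\xi-\sigma|^{-2}G\bigl(T^{(2)}(\xi)[(\xi-\sigma)^{\otimes 2}]\bigr)$; so your concern about the isotropic factorisation is recast but not really resolved---the paper does not justify smoothness of $\theta_n$ across $\Sigma_n$ any more carefully than you do, and what is actually used downstream is only the homogeneity of degree~$\ge 2$ in~$\eta$.

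\medskip

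\textbf{The origin of $m$.} Your account of $m$ is not right. In the genuine $2\times 2$ case the eigenvalues of $A$ are $\lambda_n\pm\mathfrak g$ with $\mathfrak g\ge 0$, so the ordering $\varrho_n\le\varrho_{n+1}$ always gives $m\equiv 1$; no sign ambiguity arises. The function $m\ne 1$ appears only when the rank $\ell_0=\dim\mathrm{Ran}\,\Pi$ exceeds~$2$, i.e.\ when the multiplicities $j_-(\xi),j_+(\xi)$ of $\varrho_n,\varrho_{n+1}$ on the components of $U\setminus\Sigma_n$ are not both one. The paper works with the full $\ell_0\times\ell_0$ matrix from the start: since $\lambda_n=\ell_0^{-1}\mathrm{tr}\,A=\ell_0^{-1}(j_-\varrho_n+j_+\varrho_{n+1})$, the two eigenvalues of $A_0$ are $-g$ and $(j_-/j_+)\,g$, giving $m=j_-/j_+$, locally constant by~(ii). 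You should build this in directly rather than treating the higher-rank case as a residual difficulty.
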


\begin{remark}
Note that in case~(2), the function $\theta_n$ can be zero on $\Sigma_n$.  
\end{remark}

\begin{proof}
We denote by  $j_-(\xi),j_+(\xi)$ the functions valued in  $\N$  and constant on connected component of $U\setminus\Sigma_n$ such that for all $\xi\in U\setminus\Sigma_n$
$\varrho_{n-j+1}(\xi)=\varrho_n(\xi)$ for $1\leq j\leq j_-(\xi)$ and $\varrho_{n+j}(\xi)=\varrho_{n+1}(\xi)$ for $1\leq j\leq j_+(\xi)$.
We denote  by $\Pi(\xi)$ the projector on 
$$F_\xi={\rm Ker}(P(\xi)-\varrho_n(\xi)) \oplus  {\rm Ker}(P(\xi)-\varrho_{n+1}(\xi)).$$
By the assumptions on $U$, the pair $\{\varrho_n(\xi),\varrho_{n+1}(\xi)\}$ is isolated from the remainder of the spectrum of~$P(\xi)$ when $\xi\in U$, this implies that the map 
$U\ni\xi\mapsto \Pi(\xi)\in \cL(L^2(\T^d))$ is analytic and the function 
$\dim  F_\xi$ is constant for $\xi\in U$. We denote by $\ell_0$ this constant and we have $\ell_0=j_-(\xi)+j_+(\xi)$ for all $\xi\in U\setminus \Sigma_n$.
Moreover, $\varrho_n(\xi)$ and $\varrho_{n+1}(\xi)$ are the two only eigenvalues of the 
 operator $\Pi(\xi)P(\xi)\Pi(\xi)$ which maps~$F_\xi$ onto $F_\xi$ for any~$\xi\in\R^d$. 
 
 \smallskip 
 
Let us first show that it is possible to find $\Omega\subseteq U$, with $\sigma_0\in\Omega$ and  construct, for every $\xi\in\Omega$, an orthonormal basis  $(\phi_j(\xi,\cdot ))_{1\leq j\leq \ell_0}$  of $F_\xi$ such that the maps $\xi\mapsto \phi_j(\xi,\cdot)$ are analytic for all $j\in\{1,\cdots \ell_0\}$. To see this, consider $(\varphi_i(\sigma_0,\cdot))_{1\leq i\leq \ell_0}$, a basis of $F_{\sigma_0}$. Chose a neighborhood $\Omega$ of $\sigma_0$ small enough to ensure that the vectors 
 $$\Pi(\xi) \varphi_{j}(\sigma_0,\cdot),\;\;  j\in\{1,\hdots,\ell_0\}$$
form a rank~$\ell_0$ family. Then apply the standard Schmidt orthonormalization process to this family.

\smallskip

 Let $A(\xi)$, $\xi\in\Omega$, be the matrix of the operator  $\Pi(\xi)P(\xi)\Pi(\xi)$ in the basis we just constructed. This is a $\ell_0\times\ell_0$ analytic matrix that we can write 
 $$A(\xi)=\lambda_n(\xi) {\rm Id} +  A_0(\xi)$$
 with $\lambda_n(\xi):= \frac 1{\ell_0} {\rm Tr}_{\C^{\ell_0}} A(\xi)$ and $A_0(\xi)$  analytic and trace-free. 
Moreover, $A(\xi)$ is diagonalizable and has only two eigenvalues $\varrho_n(\xi)$ and $\varrho_{n+1}(\xi)$ that we write 
 $$\varrho_n(\xi) = \lambda_n(\xi) - g(\xi)  ,\quad
 \varrho_{n+1}(\xi) = \lambda_n(\xi) + m(\xi)g(\xi) ,$$
 with $g(\xi)>0$ and 
 where, for $\xi\in\Omega\setminus\Sigma_n$, $m(\xi)$ is the ratio between the multiplicities of $\varrho_n(\xi)$ and $\varrho_{n+1}(\xi)$, 
 $$m(\xi)= \frac {j_-(\xi)}{j_+(\xi)}$$
 and $m$ is constant in the connected componnent of $U\setminus\Sigma_n$. 
 
 \smallskip 
 
The functions  $-g(\xi)$ and $m(\xi)g(\xi)$ are the two eigenvalues of $A_0(\xi)$. Therefore,
they are  homogeneous function of degree $1$ of  the coefficients of $A_0(\xi)=(a_{i,j}(\xi))_{1\leq i,j\leq \ell_0}$: we write 
 $ g(\xi)= G(A_0(\xi))$ where $G$ is a homogeneous function on $\R^{\frac{\ell_0^2-1}2}$.
 Here, we have considered  that a $\ell_0\times\ell_0$ trace-free Hermitian matrix is a function of $\ell_0-1$ real-valued diagonal coefficients   and of $\frac{\ell_0(\ell_0-1)}{2}$ complex-valued coefficients (those under the diagonal being the conjugate of those above the diagonal), and we have observed that $(\ell_0-1) +\frac{\ell_0(\ell_0-1)}{2}= \frac{\ell_0^2-1}{2}$.

 \smallskip
 
By the definition of the crossing set, $A_0(\xi)=0$ if and only if $\xi\in\Sigma_n$. Since the map $\xi\mapsto A_0(\xi)$ is analytic, it vanishes on $\Sigma_n$ at finite order $q\in\N$ and the crossing set is conical if and only if $q=1$ for all points of~$\Sigma_n$. 
Therefore, in case (1), there exists a smooth tensor $T^{\ell_0,1}(\xi)$   such that 
 $$A_0(\xi) = T^{\ell_0,1}(\xi) [\xi-\sigma_{\Sigma_n}(\xi)],$$
 with 
 $$\forall \sigma\in\Sigma_n\cap \Omega, \;\; 
 \forall \eta\in N_{\sigma}\Sigma_n \setminus \{0\},\;\; T^{\ell_0,1}(\sigma) \eta\not=0_{\C^{\ell_0\times\ell_0}}.$$
 We deduce that
 $$g(\xi)= g_n(\xi,\xi-\sigma_{\Sigma_n}(\xi)),\;\;\mbox{with} \;\;g_n(\xi,\eta):=G\left( T^{\ell_0,1}(\xi)\left[ \eta\right]^q\right)
  $$
 where 
 $g_n$ is homogeneous of degree~$1$ in the variable~$\eta$.
Besides, if none of the crossing points are conical, we write 
 $A_0(\xi) = T^{\ell_0,2}(\xi) [\xi-\sigma_{\Sigma_n}(\xi)]^2$
 with $T^{\ell_0,2}(\xi)$ a smooth tensor, which allows to prove Point (2) with 
 $$\theta_n(\xi)= |\xi-\sigma_{\Sigma_n}(\xi)|^{-2}G(T^{\ell_0,2}(\xi) [\xi-\sigma_{\Sigma_n}(\xi)]^2).$$
 That concludes the proof since Point (3) is obvious. 
 \end{proof}

 \section{Semi-classical pseudo-differential calculus}\label{app:pseudo}

We recall here results about matrix-valued semi-classical pseudo-differential operators. 
We denote by $S_{N\times N}$ the set of functions  $a=(a_{i,j})\in\cC^\infty(\R^{2d},\C^{N\times N})$ which are bounded together with their derivatives in matrix-norm.
Then, for $a\in S_{N\times N}$, one defines the Weyl semi-classical pseudo-differential operator of symbol $a$ as 
$$\op_\eps(a) f(x)=\int_{\R^{2d}} {\rm e}^{{i\over \eps} \xi\cdot (x-y)}  a\left({x+y\over 2},\xi\right)f(y)dy\, \frac{d\xi}{(2\pi\eps)^{d}},\quad \forall f\in{\mathcal S}(\R^d,\C^N).$$ 
Properties of these matrix-valued pseudo-differential operators follow from the well-understood scalar theory, once the definition of the product of matrices and its non-commutativity is taken into account. Unless stated otherwise, the reader may found proofs of the scalar versions of the results presented here in~\cite{Dimassi1999,Zwobook,F14}, for instance. 

\smallskip 

The Calderón-Vaillancourt theorem \cite{CV,Boulk99} extends to the matrix-valued case and ensures the existence of a constant $C_d>0$ such that for every $ a\in S_{N\times N}$ one has
\begin{equation}\label{eq:pseudo}
\| \op_\eps(a)\|_{{\mathcal L}(L^2(\R^d,\C^{N}))}\leq C_d\, 
N_d^\eps(a),
\end{equation}
where
$$
N_d^\eps(a):=\sum_{\alpha\in\N^{2d},|\alpha|\leq d+2} \eps^{|\alpha|/2}\sup_{\R^d\times\R^d}|\partial_{x,\xi}^\alpha a|_{\C^{N\times N}}.
$$
This estimate shows that semi-classical pseudo-differential operators are uniformly bounded in $L^2(\R^d,\C^N)$ for $\eps\in(0,1]$. Moreover, for every $ a\in S_{N\times N}$,
\begin{equation}\label{eq:adj}
\op_\eps(a)^*=\op_\eps(a^*).
\end{equation}
In particular, semi-classical pseudo-differential operators whose symbols take values in the space of Hermitian matrices are self-adjoint on $L^2(\R^d,\C^N)$.

Besides, the symbolic calculus for matrix-valued pseudodifferential operators goes as follows.

\begin{proposition}\label{prop:symbol}
Let $a,b\in S_{N\times N}$, then
$$\op_\eps(a)\op_\eps(b)  = \op_\eps(ab)+\frac{\eps}{2i} \op_\eps(\{a,b\})+\eps^2 R^{(1)}_\eps,$$
with $\{a,b\}=\sum_{j=1}^d\partial_{\xi_j} a\,  \partial _{x_j} b-\partial _{x_j}a\, \partial_{\xi_j} b$ and 
$$\left[\op_\eps(a),\op_\eps(b)\right] =\op_\eps([a,b])+ {\eps\over 2i}(\op_\eps(\{a,b\}) -\op_\eps(\{b,a\}))+\eps^2 R_\eps^{(2)},$$
$$\| R^{(j)}_\eps\|_{{\mathcal L}(L^2(\R^d,\C^N))}\leq C \,\sup_{|\alpha|+|\beta|=2} N_d^\eps(\partial_\xi^\alpha \partial_x^{\beta}  a) N_d^\eps(
 \partial_\xi^\beta \partial_x^{\alpha}  b),\quad j\in\{1,2\},$$
for some constant $C>0$ independent of $a$, $b$ and $\eps$.
\end{proposition}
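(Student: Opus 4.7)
The plan is to reduce the matrix-valued statement to the well-known scalar Moyal calculus, taking care that the only genuine novelty is bookkeeping the (non-commutative) order of matrix products. First I would write the composition kernel
\[
\op_\eps(a)\op_\eps(b)f(x)=\int \exp{\tfrac{i}{\eps}(\xi\cdot(x-z)+\eta\cdot(z-y))}\,a\!\left(\tfrac{x+z}{2},\xi\right) b\!\left(\tfrac{z+y}{2},\eta\right) f(y)\,dy\,dz\,\tfrac{d\xi\,d\eta}{(2\pi\eps)^{2d}},
\]
where the product $a(\cdot)b(\cdot)$ is understood in the matrix sense. Changing variables $z\mapsto z+\tfrac{x+y}{2}$, $(\xi,\eta)\mapsto(\xi+\tfrac{\xi+\eta}{2},\eta+\tfrac{\xi+\eta}{2})$ and performing the usual symmetrisation shows that the composite operator is $\op_\eps(a\#_\eps b)$ where the Weyl (Moyal) product is
\[
(a\#_\eps b)(x,\xi)=\frac{1}{(\pi\eps)^{2d}}\int \exp{-\tfrac{2i}{\eps}(\eta\cdot y'-\xi'\cdot y)}\,a(x+y,\xi+\xi')\,b(x+y',\xi+\eta)\,dy\,dy'\,d\xi'\,d\eta,
\]
with the factors $a$ and $b$ kept in this order (matrix product).

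Second, I would Taylor expand $a$ and $b$ around $(x,\xi)$ to order one in each slot and plug into the oscillatory integral. Writing the symplectic form as $\sigma((y,\xi'),(y',\eta))=\eta\cdot y'-\xi'\cdot y$ and using the formal identity
\[
a\#_\eps b\;\sim\;\exp{\tfrac{\eps}{2i}\sigma(D_{x,\xi},D_{y,\eta})}\bigl(a(x,\xi)\,b(y,\eta)\bigr)\Big|_{y=x,\eta=\xi},
\]
the zeroth order term gives $ab$ (in matrix order), and the first order term gives precisely
\[
\tfrac{\eps}{2i}\sum_{j=1}^{d}\bigl(\partial_{\xi_j}a\cdot\partial_{x_j}b-\partial_{x_j}a\cdot\partial_{\xi_j}b\bigr)=\tfrac{\eps}{2i}\{a,b\},
\]
the $\cdot$ being matrix product; the scalar derivation transfers verbatim because expanding the exponential phase commutes with matrix multiplication, only the ordering of $a$ and $b$ inside each term must be preserved.

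Third, for the remainder I would use Taylor's formula with integral remainder at order two in both $a$ and $b$, so that $\eps^2 R^{(1)}_\eps$ is realised as an oscillatory integral whose symbol involves products $\partial^{\alpha}_{\xi}\partial^{\beta}_{x}a\cdot\partial^{\beta}_{\xi}\partial^{\alpha}_{x}b$ with $|\alpha|+|\beta|=2$, integrated against a Schwartz weight in the extra phase variables. Applying the matrix-valued Calderón--Vaillancourt bound \eqref{eq:pseudo} to each such contribution and using that the $N_d^\eps$ seminorms of these symbols are, by the Leibniz rule and a crude Peetre-type splitting of the remainder into a finite sum of tensor products, controlled by $N_d^\eps(\partial^{\alpha}_{\xi}\partial^{\beta}_{x}a)\,N_d^\eps(\partial^{\beta}_{\xi}\partial^{\alpha}_{x}b)$, yields the claimed norm bound on $R^{(1)}_\eps$.

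The commutator formula is then obtained by subtracting the analogous expansion of $\op_\eps(b)\op_\eps(a)$ from that of $\op_\eps(a)\op_\eps(b)$: the order-zero terms give $\op_\eps([a,b])$, while the order-one terms give $\tfrac{\eps}{2i}(\op_\eps(\{a,b\})-\op_\eps(\{b,a\}))$, and the remainder $R^{(2)}_\eps$ satisfies the same estimate (with the implicit constant merely doubled). The main obstacle is not any single estimate but the bookkeeping of matrix order: unlike the scalar case, $\{a,b\}\neq -\{b,a\}$ in general, which is precisely why the commutator does not simplify to the familiar $\tfrac{\eps}{i}\op_\eps(\{a,b\})$; one must be meticulous in keeping $a$ to the left of $b$ at every stage of the asymptotic expansion and the remainder bookkeeping.
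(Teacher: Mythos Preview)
Your approach is correct and is precisely the standard Moyal-product argument found in the references the paper cites (Dimassi--Sj\"ostrand, Zworski, and \cite{F14}); the paper itself does not give a proof of this proposition, treating it as a well-known result and merely noting that ``properties of these matrix-valued pseudo-differential operators follow from the well-understood scalar theory, once the definition of the product of matrices and its non-commutativity is taken into account.'' Your emphasis on preserving the left-to-right order of $a$ and $b$ throughout the expansion is exactly the bookkeeping the paper alludes to, and your observation that $\{a,b\}\neq -\{b,a\}$ in the matrix case correctly explains why the commutator formula retains both Poisson brackets rather than collapsing to $\tfrac{\eps}{i}\op_\eps(\{a,b\})$.
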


\begin{remark} \label{rem:calculpseudo}
The term of order $\eps^2$ above has a particularly simple expression when $b\in S_{N\times N}$ does not depend on $x$. The following hold in ${\mathcal L}(L^2(\R^d,\C^N))$: 
$$\displaylines{
\op_\eps(b)\op_\eps(a)  = \op_\eps(ba)+\frac{\eps}{2i}\sum_{j=1}^d \op_\eps( \partial_{\xi_j} b\,  \partial _{x_j}a)+{\eps^2\over 8} \sum_{1\leq \ell,p\leq d} \op_\eps( \partial^2_{\xi_\ell\xi_p}b\, \partial^2_{x_\ell x_p} a )+ O(\eps^3),\cr
\op_\eps(a)\op_\eps(b)  = \op_\eps(ab)-\frac{\eps}{2i} \sum_{j=1}^d\op_\eps(\partial _{x_j}a\, \partial_{\xi_j} b)+{\eps^2\over 8} \sum_{1\leq \ell,p\leq d} \op_\eps(\partial^2_{x_\ell x_p} a \, \partial^2_{\xi_\ell\xi_p}b)+ O(\eps^3).\cr}
$$
\end{remark}

There are also analogues of Gårding's inequality for elliptic differential operators \cite{Gar53} in this context. If $a\in S_{N\times N}$ takes values in the set of non-negative Hermitian matrices then there exist $C_a>0$, which depends on a finite number of derivatives of $a$ such that
\begin{equation}\label{eq:gengard}
\op_\eps(a)+C_a\eps \Id\geq 0.
\end{equation}
This follows by considering $a*\rho_\eps$, with $\rho_\eps(x,\xi):=(\pi\eps)^{-d} e^{-\frac{|x|^2+|\xi|^2}{\eps}}$. A direct computation shows that $\op_\eps(a*\rho_\eps)$ is a non-negative operator (see for instance \cite{GerLeich93}). One concludes by noticing that, due to \eqref{eq:pseudo},
\begin{equation}\label{eq:husimi}
\op_\eps(a)-\op_\eps(a*\rho_\eps)=O(\eps),
\end{equation}
in ${\mathcal L}(L^2(\R^d,\C^N))$. The operator $\op_\eps(a*\rho_\eps)$ is called the anti-Wick quantization of the symbol $a\in S_{N\times N}$, besides the fact that non-negative symbols correspond to non-negative operators, the operator norm satisfies a simpler bound than that satisfied by their Weyl counterparts, namely:
\begin{equation}\label{eq:bddhusimi}
\| \op_\eps(a*\rho_\eps)\|_{{\mathcal L}(L^2(\R^d,\C^{N}))}\leq \|a\|_{L^\infty(\R^{2d},\C^{N\times N})}.
\end{equation}

\smallskip 

Through the article, it is necessary to understand the boundedness and symbolic calculus properties of operators with symbols of limited regularity. Denote by $\cA$ the completion of $\cC^\infty_0(\R^{2d},\C^{N\times N})$ with respect to the norm:
\begin{equation}\label{eq:norma}
\|a\|_\cA :=  \sup_{\xi\in\R^d} \sup _{|\alpha|\leq d+2} \int_{\R^d} | \partial_x^\alpha a(x,\xi) |_{\C^{N\times N}} dx .
\end{equation}
\begin{remark}\label{rem:sep}
By a mollification argument, one can show that $\cA$ contains all the functions $a\in\cC_0(\R^{d}_x\times\R^d_\xi,\C^{N\times N})$ that are $d+2$ times continuously differentiable with respect to the first variable.
\end{remark}

This regularity assumption is sufficient for our purposes (see \cite[Lemma~3.7]{Ge91} and~\cite[Section~3]{FGL} for related results).
\begin{lemma}\label{lem:singulier}
The space $\cA$ enjoys the following properties.
\begin{enumerate}
\item There exists a universal constant $C_d>0$ only depending on $d$ such that, for every $a\in\cA$,
$$\| {\rm op}_\eps(a)\|_{\cL(L^2(\R^d,\C^N))} \leq  C_d \|a\|_\cA. $$
\item Suppose $\varrho\in \mathrm{Lip}(\R^d_\xi,\C^{N\times N})$ and $a\in\cA$. The following hold in ${\mathcal L}(L^2(\R^d,\C^N))$:
$${\rm op}_\eps(a\,\varrho)= \op_\eps(a) \varrho(\eps D_x)+O(\eps),$$
$$\op_\eps(\varrho\, a)=\varrho(\eps D_x)\op_\eps(a) +O(\eps).$$
\item Suppose $\varrho\in \mathcal{C}^{1,1}(\R^d_\xi,\R)$ (that is $\nabla \varrho\in\mathrm{Lip}(\R^d,\R^d)$) and $a\in\cA$. The following hold in ${\mathcal L}(L^2(\R^d,\C^N))$:
$$\frac i \eps [{\rm op}_\eps(a),\varrho(\eps D_x){\rm Id} ] ={\rm op}_\eps(\nabla_x a\cdot \nabla \varrho(\xi))+O(\eps).$$
\end{enumerate}
\end{lemma}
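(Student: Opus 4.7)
The plan is to base all three statements on the partial $x$-Fourier transform of $a$. For $a\in\cA$, set $\tilde a(\eta,\xi):=\int_{\R^d}a(x,\xi){\rm e}^{-i\eta\cdot x}dx$; this is well defined for $a\in\cC_0^\infty$ and extends by density in the $\cA$-norm. A direct computation, obtained by splitting ${\rm e}^{i\eta\cdot(x+y)/2}={\rm e}^{i\eta\cdot x/2}{\rm e}^{i\eta\cdot y/2}$ and reading $\op_\eps(b)=b(\eps D_x)$ for $\xi$-only symbols, yields the key identity
\begin{equation}\label{eq:keyid}
\op_\eps\bigl({\rm e}^{i\eta\cdot x}b(\xi)\bigr)={\rm e}^{i\eta\cdot x}\,b(\eps D_x+\eps\eta/2),
\end{equation}
valid for every $b\in L^\infty(\R^d,\C^{N\times N})$, which gives the representation
\[
\op_\eps(a)=\frac{1}{(2\pi)^d}\int_{\R^d}{\rm e}^{i\eta\cdot x}\,\tilde a(\eta,\eps D_x+\eps\eta/2)\,d\eta.
\]
Statement (1) is then immediate: each factor of the integrand is bounded on $L^2(\R^d,\C^N)$ by $\sup_\xi|\tilde a(\eta,\xi)|_{\C^{N\times N}}$, and $d+2$ integrations by parts in $x$ in the definition of $\tilde a$ produce the uniform bound $(1+|\eta|^2)^{(d+2)/2}\sup_\xi|\tilde a(\eta,\xi)|\le C_d\|a\|_\cA$; since $(1+|\eta|^2)^{-(d+2)/2}$ is integrable on $\R^d$, the conclusion follows.

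For (2), subtracting the two representations gives
\[
\op_\eps(a\varrho)-\op_\eps(a)\varrho(\eps D_x)=\frac{1}{(2\pi)^d}\int_{\R^d}{\rm e}^{i\eta\cdot x}\tilde a(\eta,\eps D_x+\eps\eta/2)[\varrho(\eps D_x+\eps\eta/2)-\varrho(\eps D_x)]\,d\eta,
\]
and the Lipschitz hypothesis on $\varrho$ bounds the bracketed Fourier multiplier in $\cL(L^2)$ by $(\eps|\eta|/2)\,\mathrm{Lip}(\varrho)$; the extra factor $|\eta|$ is still absorbed by the decay from (1) since $\int|\eta|(1+|\eta|)^{-(d+2)}d\eta<\infty$, giving the $O(\eps)$ bound. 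Moving $\varrho(\eps D_x)$ through the exponential on the \emph{left} of \eqref{eq:keyid} turns it into $\varrho(\eps D_x+\eps\eta)$ and produces the symmetric identity for $\op_\eps(\varrho\,a)$. For (3), the $\cC^{1,1}$ hypothesis permits the Taylor expansion $\varrho(\eps\xi+\eps\eta)-\varrho(\eps\xi)=\eps\,\eta\cdot\nabla\varrho(\eps\xi)+r_\eps(\eta,\xi)$ with $|r_\eps(\eta,\xi)|\le C\eps^2|\eta|^2$. Inserting this into the commutator representation, the leading term is $\eps(2\pi)^{-d}\int{\rm e}^{i\eta\cdot x}\tilde a(\eta,\eps D_x+\eps\eta/2)\,\eta\cdot\nabla\varrho(\eps D_x)\,d\eta$; recognizing $\eta\,\tilde a(\eta,\xi)$ as $-i$ times the $x$-Fourier transform of $\nabla_x a(x,\xi)$ and replacing $\nabla\varrho(\eps D_x)$ by $\nabla\varrho(\eps D_x+\eps\eta/2)$ at an $O(\eps|\eta|)$ cost (absorbed as in (2)), this term reassembles into $\pm i\eps\,\op_\eps(\nabla_x a\cdot\nabla\varrho)$ and produces the stated identity after multiplying by $i/\eps$.

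The main obstacle lies in the remainder analysis for (3): the quadratic bound $|r_\eps(\eta,\xi)|\le C\eps^2|\eta|^2$ combined with the decay $(1+|\eta|)^{-(d+2)}$ gives the borderline integral $\eps^2\int|\eta|^2(1+|\eta|)^{-(d+2)}d\eta$, which is only logarithmically divergent at infinity and cannot be absorbed by the $\cA$-norm (tailored to exactly $d+2$ derivatives) alone. The resolution is to split the $\eta$-integration at the threshold $|\eta|\sim 1/\eps$: on $\{|\eta|\le 1/\eps\}$ the Taylor bound yields $O(\eps^2\log(1/\eps))=O(\eps)$, while on $\{|\eta|>1/\eps\}$ one discards Taylor's formula and uses only the crude bound $|\varrho(\eps\xi+\eps\eta)-\varrho(\eps\xi)-\eps\eta\cdot\nabla\varrho(\eps\xi)|\le C\,\eps|\eta|$ coming from the Lipschitz character of $\varrho$, which contributes $O(\eps)$ after integration. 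Outside of this threshold book-keeping, the whole argument is an elementary consequence of \eqref{eq:keyid}, with density in the $\cA$-norm used to pass from $\cC_0^\infty$ symbols (where Fubini and the inversion formula are immediate) to general $a\in\cA$.
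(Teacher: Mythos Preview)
Your approach is essentially the same as the paper's: both rest on the partial Fourier transform in $x$ and reduce everything to $L^\infty$ bounds on Fourier multipliers. The paper phrases this as a conjugation by the semi-classical Fourier transform $\mathcal{F}^\eps$ followed by a Schur-type kernel estimate, while you phrase it as the superposition formula $\op_\eps(a)=(2\pi)^{-d}\int e^{i\eta\cdot x}\tilde a(\eta,\eps D_x+\eps\eta/2)\,d\eta$; these are two equivalent ways of organising the same computation, and for items~(1) and~(2) your argument matches the paper line for line.

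There is one genuine slip in your treatment of~(3). Your ``crude bound'' on $\{|\eta|>1/\eps\}$ invokes ``the Lipschitz character of $\varrho$'', but the hypothesis $\varrho\in\mathcal C^{1,1}$ only says $\nabla\varrho$ is Lipschitz, not bounded; hence $\varrho$ itself need not be Lipschitz, and the estimate $|\varrho(\eps\xi+\eps\eta)-\varrho(\eps\xi)-\eps\eta\cdot\nabla\varrho(\eps\xi)|\leq C\eps|\eta|$ is not available in general. The paper avoids your threshold split entirely: it simply writes $|v|^2|\hat a(v,x)|=|\widehat{\Delta_x a}(v,x)|$ and bounds the remainder by $\eps\,C_d\,L_{\nabla\varrho}\,\|\Delta_x a\|_{\cA}$, accepting that the constant in $O(\eps)$ depends on two extra $x$-derivatives of $a$. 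In practice this is harmless since the lemma is only applied to $a\in\cC_0^\infty$. Your splitting trick \emph{does} work if you add the (mild) hypothesis $\nabla\varrho\in L^\infty$, which holds in every application in the paper because the Bloch energies $\varrho_n$ are $2\pi\Z^d$-periodic. Alternatively, you can drop the split and follow the paper: absorb the $|\eta|^2$ weight as $\widetilde{\Delta_x a}(\eta,\xi)$ and bound by $\|\Delta_x a\|_{\cA}$, which is the cleaner bookkeeping.
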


\begin{proof}
Suppose that $A\in \cL(L^2(\R^d,\C^N))$ is of the form
\begin{equation}\label{eq:ker1}
Af(x)=\frac{1}{\eps^d}\int_{\R^d}k\left(\frac{x+y}{2},\frac{x-y}{\eps}\right)f(y)dy, \quad \forall f\in \Sch(\R^d,\C^N);
\end{equation}
then one deduces, after change of variables and using Hölder's inequality, that
\begin{equation}\label{eq:ker2}
|(A\,f,g)_{L^2(\R^d,\C^N)}|\leq \|f\|_{L^2(\R^d,\C^N)}\|g\|_{L^2(\R^d,\C^N)}\int_{\R^d}\sup_{x\in\R^d}|k(x,v)|_{\C^{N\times N}}dv.\quad \forall f,g\in\Sch(\R^d,\C^N).
\end{equation}
In order to prove item (1), notice that the following identity holds,
\begin{equation}\label{eq:conj}
\op_\eps(a)=({\mathcal F}^\eps)^*\op_\eps(\underline{a}){\mathcal F}^\eps,\quad \underline a(x,\xi):=a(-\xi,x),
\end{equation}
where $\mathcal{F}^\eps$ stands for the semi-classical Fourier transform:
\[
{\mathcal F}^\eps(f)(\xi)= \int_{\R^d} {\rm e}^{-  i \frac{\xi}{\eps}\cdot x} f(x)  \frac{dx}{(2\pi\eps)^{d/2}}.
\]
The operator $\op_\eps(\underline{a})$ is of the form \eqref{eq:ker1} with $k=(2\pi)^{-d}\widehat{a}$, where $\widehat{a}$ denotes the Fourier transform of $a$ with respect to the first variable. 
Using \eqref{eq:ker2}, \eqref{eq:conj} and Plancherel's formula we conclude
\[
|(\op_\eps(a)f,g)_{L^2(\R^d,\C^N)}|\leq \|f\|_{L^2(\R^d,\C^N)}\|g\|_{L^2(\R^d,\C^N)}\int_{\R^d}\sup_{\xi\in\R^d}|\widehat{a}(v,\xi)|_{\C^{N\times N}}\frac{dv}{(2\pi)^d} \quad \forall f,g\in\Sch(\R^d,\C^N).
\]
The constant $C_d>0$  is then chosen such that:
\begin{equation*}
\int_{\R^d} \sup_{\xi\in\R^d} |\widehat{a}(v,\xi)|_{\C^{N\times N}}\frac{dv}{(2\pi)^d}\leq C_d  \|a\|_\cA. 
\end{equation*}
This concludes the proof of the first assertion.

\smallskip 

In order to prove the second assertion, we show that 
\[
R^\eps:=(\cF^\eps)^*({\rm op}_\eps(a\,\varrho)- \op_\eps(a) \varrho(\eps D_x))\cF^\eps,
\]
satisfies $\|R^\eps\|_{\cL(L^2(\R^d,\C^N))}=O(\eps)$;  the proof of the other identity is analogous. From~\eqref{eq:conj}, we deduce that
\[
R^\epsilon f(x)=\int_{\R^d}r^\eps\left(\frac{x+y}{2},\frac{x-y}{\eps}\right)\,f(y)\frac{dy}{(2\pi\eps)^d},\quad \forall f\in \Sch(\R^d,\C^N), 
\]
where $r^\eps(x,v):=\widehat{a}(v,x)(\varrho(x)-\varrho(x-\eps v))$. By \eqref{eq:ker2} we can estimate:
\[
\|R^\eps\|_{\cL(L^2(\R^d,\C^N))} \leq \int_{\R^d}\sup_{x\in\R^d} |r^\eps(x,v)|_{\C^{N\times N}}\frac{dv}{(2\pi)^d}.
\] 
By hypothesis, we can find $L_\varrho>0$ such that 
\[
|\varrho(x)-\varrho(x-\eps v)|_{\C^{N\times N}}\leq L_\varrho\eps |v|,\quad \forall (v,x)\in\supp \widehat{a}.
\]
Therefore, using 
\[
 |v||\widehat a(v,x) |\leq (1+|v|^2) | \widehat a(x,v)| = | \widehat a(x,v) |+ |\widehat{(-\Delta a)} (x,v)|,
 \]
 we deduce
\[
\|R^\eps\|_{\cL(L^2(\R^d,\C^N))} \leq \eps\, C_d L_\varrho (\| a\| _{\mathcal A} + \| \Delta a\|_{\mathcal A}).
\]

\smallskip 

For the third assertion, we show that 
\[ 
\tilde R^\eps:=(\cF^\eps)^*(\frac 1\eps[{\rm op}_\eps(a),\varrho(\eps D)]- \op_\eps(\nabla_x \cdot\nabla \varrho))\cF^\eps,
\]
satisfies $\|\tilde R^\eps\|_{\cL(L^2(\R^d,\C^N))}=O(\eps)$. Indeed, the kernel of $\tilde R^\eps$ is of the form~\eqref{eq:ker1} with 
\begin{align*}
\tilde r^\eps(x,v)& 
=\frac i\eps \widehat a(v,x)\left(\varrho(x)-\varrho(x-\eps v)\right) -\widehat {\nabla_x a} (v,x) \cdot \nabla \varrho(x)\\
&= i\widehat a(v,x) v\cdot \nabla \varrho(x) - \widehat{\nabla_x a}(v,x)\cdot \nabla \varrho(x) + \eps \theta(x,v)\widehat a(x,v)
\end{align*}
and there exists $L_{\nabla \varrho}>0$ such that 
$$|\theta(x,v) |\leq L_{\nabla\varrho} |v|^2.$$
Using $iv\widehat a(v,x)= \widehat {\nabla_x a}(x,v)$ and $ |v|^2 | \widehat a(x,v)| = |\widehat{(-\Delta a)} (x,v)|$, we deduce 
$$\|\tilde R^\eps\|_{\cL(L^2(\R^d,\C^N))}\leq \eps C_d L_{\nabla\varrho} \| \Delta a\|_{\mathcal  A} .$$
\end{proof}

 \section{Two-scale pseudodifferential operators}\label{app:twomic}
 
We prove here technical lemma concerning the pseudodifferential operators considered in Section~\ref{sec:twomic}, the formalism of which we follow. Due to the properties of Bloch energies, we are going to consider more general classes of symbols than those of ${\mathcal A}^{(2)}$ (as defined in Section~\ref{sec:twomic}). For $k\in\Z$, we introduce the class ${\mathcal A}^{(2)}_k$ of smooths functions on $\R^{3d}$ that are compactly supported in the variables $(x,\xi)$ uniformly with respect to $\eta$ and coincide with an homogeneous function of degree $k$ in $\eta$ as soon as $|\eta|>R_0$ for some $R_0>0$. With these notations, $\mathcal A^{(2)}=\mathcal A^{(2)}_0$. 
\smallskip

Of particular interest for us are functions  $g(\xi,\eta)$, independent of the variable~$x$, that are smooth in~$\xi$, 
and satisfy a symbol estimate of order~$k$ in $\eta$. We denote by $\mathcal H_k$ the set of these functions. For $X$  a connected, closed embedded submanifold of $(\R^d)^*$ and $g\in \mathcal H_k$,  the operator 
\begin{equation}\label{def:geps}
g_\eps(\eps D)= g(\eps D, D-\eps^{-1} \sigma_X(\eps D)) 
\end{equation}
 is then well defined as an operator mapping $H^s(\R^d)$ into $H^{s-k}(\R^d)$ uniformly in~$\eps$ when $s\in \R$.
We are interested in the pseudodifferential calculus involving two-scale pseudodifferential operator of the form ${\rm op}_\eps(a)$ for $a$ in ${\mathcal A}_k^{(2)}$ for some $k\in\Z$ and  Fourier multipliers  $g_\eps(\eps D)$ with $g\in{\mathcal  H}_p$ for some $p\in\Z$.

\smallskip 

The first type of pseudodifferential calculus results that we shall use concerns the large values of $\eta$. We consider $\chi\in{\mathcal C}_0^\infty(\R^d)$ such that $\chi=1$ on $B(0,1)$ and $\chi=0$ on $B(0,2)^c$ with $0\leq \chi\leq 1$.
We set for $R,\delta>0$
$$a^{R,\delta}(x,\xi,\eta)= a(x,\xi,\eta) ((1-\chi(\eta/R)) \chi ((\xi-\sigma_{\Sigma}(\xi))/\delta).$$
On the support of $a^{R,\delta}$, $\eps R \leq |\xi-\sigma_{\Sigma}(\xi)|\leq 2\delta$.
 Note that if  $k>0$ and $a\in{\mathcal A}^{(2)}_{-k}$, the estimate~\eqref{eq:norma}  yields that in $\mathcal L(L^2(\R^d))$,
$${\rm op}_\eps(a_\eps^{R,\delta})= O((\eps R)^{-k}).$$

 \begin{lemma}\label{lem:calculRdelta}
 Let $k\in\N$, $a\in{\mathcal A}^{(2)}_{-k}$, $g=g(\xi,\eta)$ in ${\mathcal H}_{k+1}$ and $\delta,R>0$.
 Then, in $\mathcal L(L^2(\R^d))$, 
 $$\left[ {\rm op}_\eps (a_\eps^{R,\delta}), g_\eps(\eps D)\right] = i \,{\rm op}_\eps\left((\nabla_x a^{R,\delta} \cdot\nabla_\eta g)_\eps\right)+O(\eps)+O(\delta)+O(1/R).$$
\end{lemma}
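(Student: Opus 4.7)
The plan is to view $g_\eps(\eps D)$ as the Weyl operator $\op_\eps(b_\eps)$ with $x$-independent symbol $b_\eps(\xi):=g(\xi,(\xi-\sigma_X(\xi))/\eps)$, and to apply the commutator form of the Moyal expansion given by Proposition~\ref{prop:symbol}. Since only odd-order terms appear in the Weyl commutator and $b_\eps$ does not depend on $x$, one has
\begin{equation*}
\bigl[\op_\eps(a^{R,\delta}_\eps),g_\eps(\eps D)\bigr] = -\frac{\eps}{i}\op_\eps\bigl(\nabla_x a^{R,\delta}_\eps\cdot\nabla_\xi b_\eps\bigr) + \op_\eps(r^\eps),
\end{equation*}
with $r^\eps$ a cubic Moyal remainder controlled by third derivatives of $a^{R,\delta}_\eps$ and $b_\eps$. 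The task then splits into identifying the main piece of the leading symbol and estimating $\op_\eps(r^\eps)$.

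The key step will be the chain rule computation for $\nabla_\xi b_\eps$. Setting $\eta(\xi)=(\xi-\sigma_X(\xi))/\eps$, a direct calculation yields
\begin{equation*}
\nabla_\xi b_\eps(\xi) = (\nabla_\xi g)_\eps + \eps^{-1}\bigl(I-d\sigma_X(\xi)\bigr)^{T}(\nabla_\eta g)_\eps,
\end{equation*}
so the singular $\eps^{-1}$ combines with the prefactor $\eps/i$ to produce a candidate leading symbol $i\,\nabla_x a^{R,\delta}_\eps\cdot(I-d\sigma_X(\xi))^{T}(\nabla_\eta g)_\eps$. The decisive geometric input, which I would then invoke, is that in the applications $g$ arises from Lemma~\ref{lem:structurevarrho} as a function on the normal bundle, so $\nabla_\eta g(\sigma,\eta)\in N_\sigma X$ for $\sigma\in X$; combined with the identity $d\sigma_X(\sigma)=\pi_{T_\sigma X}$ on $X$, this forces $(d\sigma_X(\sigma))^{T}\nabla_\eta g(\sigma,\eta)=0$. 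Since $|\xi-\sigma_X(\xi)|\le 2\delta$ on the support of $a^{R,\delta}$, a Taylor expansion of $d\sigma_X$ around $\sigma_X(\xi)$ introduces only an $O(\delta)$ error and identifies the main term with the asserted $i\,\op_\eps((\nabla_x a^{R,\delta}\cdot\nabla_\eta g)_\eps)$. The contribution of $(\nabla_\xi g)_\eps$, once multiplied by $\eps$ and using $\eps|\eta|\le 2\delta$ on the support, is likewise $O(\delta)$.

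To conclude I would bound the cubic remainder $\op_\eps(r^\eps)$. On $R\le|\eta|\le 2\delta/\eps$, three $x$-derivatives of $a^{R,\delta}_\eps$ are of size $|\eta|^{-k}$, while three $\xi$-derivatives of $b_\eps$ each produce a factor $\eps^{-1}$ through the chain rule and lower the $\eta$-order of $g$ by one, giving $\eps^{-3}|\eta|^{k-2}$; the cubic Moyal term is therefore bounded pointwise by $\eps^3\cdot\eps^{-3}|\eta|^{-2}\le R^{-2}$, and the Calder\'on-Vaillancourt estimate of Appendix~\ref{app:pseudo} yields $\op_\eps(r^\eps)=O(1/R)$ in $\mathcal{L}(L^2(\R^d))$. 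The main obstacle will be the simultaneous control of the two competing scales: each $\xi$-derivative of $b_\eps$ produces an apparently catastrophic $\eps^{-1}$ factor, and it is only through the combination of the decay encoded in the class $\mathcal{H}_{k+1}$ and the normality cancellation $(d\sigma_X)^{T}\nabla_\eta g|_X=0$ that every term in the expansion can be shown to be of the claimed order $O(\eps)+O(\delta)+O(1/R)$.
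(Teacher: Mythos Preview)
Your strategy is morally the right one and shares the key geometric input with the paper's proof (the identity $d\sigma_X(\sigma)^T\nabla_\eta g(\sigma,\eta)=0$ on $X$), but there is a genuine gap at the very first step. You invoke Proposition~\ref{prop:symbol} on the pair $(a^{R,\delta}_\eps,b_\eps)$, yet $b_\eps(\xi)=g(\xi,(\xi-\sigma_X(\xi))/\eps)$ is in general \emph{not smooth}: in the conical case $g\in\mathcal H_1$ is homogeneous of degree~$1$ in $\eta$, hence not even $C^1$ at $\eta=0$, so $b_\eps$ is singular along $X$ and the Moyal expansion is not available. The paper's first move is exactly to fix this: it introduces a cutoff $\kappa\in C^\infty(\R^d)$, supported away from $0$ and equal to~$1$ where $1-\chi=1$, and uses that $\op_\eps(a^{R,\delta}_\eps)=\op_\eps(a^{R,\delta}_\eps)\kappa^R_\eps(\eps D)+O(R^{-N})$ (and similarly on the left) to replace $g_\eps(\eps D)$ by $(\kappa^R g)_\eps(\eps D)$ up to $O(R^{-N+1})$. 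Only then is the symbol smooth and the symbolic calculus legitimate.

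After that regularization the two approaches diverge. The paper does not run the Moyal expansion in the original $\xi$-coordinates; instead it passes to adapted local coordinates via the isometry $\mathcal U_\eps$ of Lemma~4.3 in~\cite{CFM2}, so that $X$ becomes a coordinate plane $\{\xi'=0\}$ and the two-scale variable is simply $\xi'/\eps$. In those coordinates the commutator computation is a clean application of standard calculus, and the identity $B(\sigma)d\varphi(\sigma)=\mathrm{Id}-d\sigma_X(\sigma)$ together with $d\sigma_X(\sigma)\nabla_\eta g=0$ yields the main term. Your coordinate-free route via the chain rule and Taylor expansion of $d\sigma_X$ can be made to work after the $\kappa^R$ step, but your remainder bound is too quick: Proposition~\ref{prop:symbol} controls the $\eps^2$-remainder through $N_d^\eps$-norms that require $d+2$ further $(x,\xi)$-derivatives, and every $\xi$-derivative of $b_\eps$ costs $\eps^{-1}$, so the raw estimate blows up. One needs instead the $\mathcal A$-norm bound of Lemma~\ref{lem:singulier} (which uses only $x$-derivatives) on the explicit remainder symbol, or---as the paper does---to work in coordinates where the $\eps$-dependence is a scaling in some variables only.
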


\begin{proof}
We want to  take advantage from the fact that  we have  $ |\xi-\sigma_{\Sigma} (\xi)|>R\eps$ on the support of $a^{R,\delta}_\eps$ to avoid the singularity of the function $g$. Indeed,  $a^{R,\delta}_\eps$ is supported outside the singularity of the function $g$. 
Let $\kappa \in{\mathcal C}^\infty(\R^d)$ supported outside $0$ and such that $(1-\chi)\kappa=(1-\chi)$ and denote by $\kappa^R$ the function defined by $\kappa^R(\eta)=\kappa(\eta/R)$ so that  
$$\kappa_\eps^R(\xi)= \kappa \left(\frac {\xi-\sigma_{X} (\xi)}{R\eps}\right).$$
 Standard symbolic calculus gives that  for all $N\in\N$, we have 
\begin{align*}
{\rm op}_\eps(a^{R,\delta}_\eps) &= {\rm op}_\eps(a^{R,\delta}_\eps)\kappa_\eps^R(\eps D)+R^{-N} {\rm op}_\eps(
 r^{1,N} _\eps(x,\xi))\\
&=\kappa_\eps^R(\eps D) {\rm op}_\eps(a^{R,\delta}_\eps)+R^{-N} {\rm op}_\eps(
 r^{2,N} _\eps(x,\xi)) 
\end{align*}
where the symbols $r^{j,N}_\eps$ have symbol norms that are uniformly bounded and are supported in the set $\{c_0 \eps R < \xi-\sigma_{X} (\xi)<C_0\eps R\}$ for 
 some $0<c_0<C_0$.
 In particular,  by  Proposition~\ref{prop:symbol} and because $g\in\mathcal H_{k+1}$, $a\in{\mathcal A}^{(2)}_{-k}$,
 $${\rm op}_\eps (
  r^{1,N}_\eps) g_\eps(\eps D)=
 {\rm op}_\eps (
 r^{1,N}_\eps g_\eps ) +O(R^{k+1-k})=  {\rm op}_\eps (r^{1,N}_\eps g_\eps ) +O(R),
 $$
 whence
$$ {\rm op}_\eps (
 r^{1,N}_\eps) g_\eps(\eps D)= O(R).$$
 Similarly, we have 
  $$ g_\eps(\eps D) {\rm op}_\eps (
   r^{2,N}_\eps) = O(R).$$
We deduce 
$$
 \left[{\rm op}_\eps(a^{R,\delta}_\eps) ,g_\eps(\eps D)\right]
= \left[{\rm op}_\eps(a^{R,\delta}_\eps) ,(\kappa^Rg)_\eps(\eps D)\right]+O( R^{-N+1}).
$$
The function $\xi\mapsto \kappa^R g$ is now smooth, which allows to   use standard results of symbolic calculus, what we shall do in local coordinates.

\smallskip 

We consider a system of local coordinates $\varphi(\xi)=0$ of $X$ and the $d\times p$  smooth matrix $B(\xi)$ such that 
$$\xi-\sigma_X(\xi)= B(\xi) \varphi(\xi)$$
where $\varphi (\xi)\in \C^{p\times 1}$ is a column.
 We associate with $\varphi$ the diffeomorphism
$$ \Phi: (\,^t\varphi(\xi),\xi'')\mapsto \xi$$
and, according to Lemma~4.3 in~\cite{CFM2}, there exists an isometry ${\mathcal U}_\eps$ of $L^2(\R^d)$ such that 
for all $b\in{\mathcal A}^{(2)}$ and $f\in L^2(\R^d)$
$$({\rm op}_\eps(b_\eps) f,f)=
\left({\rm op}_\eps \left(b\left(
\, ^t d\Phi(\xi)^{-1}x, \Phi(\xi), B(\Phi(\xi) )\frac{\xi'}\eps 
\right)\right){\mathcal U}_\eps f,{\mathcal U}_\eps f\right)+O(\eps).$$
Note that if $\xi,\zeta,x\in\R ^d$, 
$$d\Phi(\xi)^{-1} \zeta= (d\varphi(\Phi(\xi)) \zeta,\zeta''),\;\;
\, ^td\Phi(\xi)^{-1} x= \, ^td\varphi(\Phi(\xi)) x'+(0,x''),$$
where $d\varphi(\xi)$ is the $p\times d$ matrix with lines  the gradient of each of the component of $\varphi$. 
Therefore, focusing on the commutator 
$$L^\eps= \left[
{\rm op}_\eps \left(a^{R,\delta}\left(\, ^t d\Phi(\xi)^{-1}x, \Phi(\xi), B(\Phi(\xi) )\frac{\xi'}\eps \right)\right)
,(\kappa^R g)\left(\Phi(\eps D), B(\Phi(\eps D) )D_{x'} \right)\right],$$
 we obtain in $\mathcal L(L^2(\R^d))$,
 \begin{align*}
 L^\eps & = {\rm op}_\eps \left(
 d\varphi(\Phi(\xi)) \nabla_{x} a^{R,\delta} \left(\, ^t d\Phi(\xi)^{-1}x, \Phi(\xi), B(\Phi(\xi) )\frac{\xi'}\eps \right) \cdot \,^tB(\Phi(\xi) )\nabla_\eta  (\kappa^Rg)\left(\Phi(\xi), B(\Phi(\xi) )\frac{\xi'}\eps \right)
 \right)\\
 &\qquad + O(R^{-1})+O(\eps).
 \end{align*}
We observe that if $\sigma\in X$, $B(\sigma) d\varphi(\sigma)={\rm Id} -d\sigma_X(\sigma)$ and $d\sigma_X(\sigma) \nabla_\eta g(\sigma,\eta)=0$ because $\nabla_\eta g(\sigma,\eta)\in N_\sigma X$ and $d\sigma_X(\sigma)\zeta=0$ if $\zeta\in N_\sigma X$.
We deduce 
$$ [{\rm op}_\eps(a^{R,\delta}_\eps) ,g_\eps(\eps D)]= i {\rm op}_\eps (b_\eps) +O(\eps) +O(R^{-1})+O(\delta),$$
with 
$$b(x,\xi,\eta)=  \kappa^R (\eta)\nabla_x a^R(x,\xi,\eta) \cdot \nabla_\eta  g\left(\xi,\eta\right)\in{\mathcal A}^{(2)}_{0}.$$
Using  $\kappa (1-\chi)=1-\chi$, we obtain
$$b(x,\xi,\eta)= \nabla_x a^R(x,\xi,\eta) \cdot \nabla_\eta  g\left(\xi,\eta\right).$$
\end{proof}

We shall also need properties of  two-scale symbolic calculus at finite distance, i.e. for symbols that are compactly supported in all the variables, including the variable $\eta$. Here again, the use of local coordinates  and  Lemma~4.3 in~\cite{CFM2}  are a crucial argument.

 \begin{lemma}\label{lem:gepsfinite}
Let $a\in{\mathcal C}_0^\infty(\R^{3d})$ and $g\in\mathcal H_k$ for $k\in\N$.
Let $(f^\eps)_{\eps>0}$ a bounded family in $L^2(\R^d)$ and $Md\nu$ the two-scale Wigner measure at finite distance associated with its concentration on~$X$. Then, there exists a constant $C>0$ such that for all $\eps>0$,  
$$\left({\rm op}_\eps(a) g_\eps (\eps D) f^\eps,f^\eps\right) \leq C\| f^\eps\|_{ L^2(\R^d)}.$$
Besides, up to the subsequence defining $Md\nu$, 
$$\left({\rm op}_\eps(a) g_\eps (\eps D) f^\eps,f^\eps\right) \Tend {\eps}{0} \int_{TX^*} {\rm Tr} _{L^2(N_\sigma X)}(Q^X_a(v,\sigma) Q^X_g(\sigma) M(\sigma,v) )d\nu (\sigma,v).$$
 \end{lemma}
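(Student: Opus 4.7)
The proof proceeds in three steps: reduction to a flat local model, the uniform $L^2$ bound, and identification of the limit through the two-scale Wigner framework.

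First, using a partition of unity on the compact set $\supp a \cap (X \times \R^d_\eta)$ together with the isometry $\mathcal{U}_\eps$ of Lemma~4.3 in~\cite{CFM2}, I would reduce to a single local chart in which $X = \{0\} \times \R^{d-p}$, $\sigma_X(\xi', \xi'') = (0, \xi'')$, and $\xi - \sigma_X(\xi) = (\xi', 0)$. In these coordinates $\op_\eps(a_\eps)$ is a standard semi-classical pseudodifferential operator with Weyl symbol $a(x, \xi, \xi'/\eps)$, while $g_\eps(\eps D)$ is the Fourier multiplier with symbol $g(\eps\xi, \xi')$.

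For the uniform bound, the compact support of $a$ in the $\eta$-variable confines the relevant frequencies to the shrinking set $\{|\xi'| \leq R_0 \eps\}$. Picking $\chi \in \mathcal{C}_0^\infty(\R^p)$ with $\chi \equiv 1$ on a neighbourhood of the $\eta$-projection of $\supp a$, I would split $g = g\chi + g(1-\chi)$; the contribution of $g(1-\chi)$ to $\op_\eps(a_\eps) g_\eps(\eps D)$ is $O(\eps^\infty)$ in operator norm by a non-stationary-phase/commutator argument analogous to the one in the proof of Lemma~\ref{lem:calculRdelta}, because the $\eta$-supports of $a$ and $g(1-\chi)$ are disjoint. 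The truncated multiplier $(g\chi)_\eps(\eps D)$ is $L^2$-bounded uniformly in $\eps$ by Plancherel, its symbol $(g\chi)(\eps\xi, \xi')$ being uniformly bounded. Combined with the Calder\'on--Vaillancourt estimate of Lemma~\ref{lem:singulier}~(1) applied to $\op_\eps(a_\eps)$, this yields the first assertion.

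For the convergence, I would apply the semi-classical product formula (Proposition~\ref{prop:symbol}) to the composition $\op_\eps(a_\eps)(g\chi)_\eps(\eps D)$. After the rescaling $\xi' = \eps \eta$ in the normal direction, this composition becomes, modulo $O(\eps)$ in operator norm, a two-scale pseudodifferential operator whose outer (tangential) quantization is semi-classical while its inner (normal) quantization is the full non-semi-classical Weyl product; the latter is precisely the operator composition of $Q_a^X(\sigma, v)$ with the Fourier multiplier $Q_g^X(\sigma)$ on $L^2(N_\sigma X)$. Proposition~\ref{prop:compact}, applied along the subsequence defining $M\,d\nu$, then delivers the stated limit after using cyclicity of the trace.

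The main obstacle is the bookkeeping in this last step: the quantization $Q^X$ is non-semi-classical Weyl on the normal fibre, so the mere product of two-scale symbols does \emph{not} in general correspond to the composition of inner operators. It is the explicit form of the isometry $\mathcal{U}_\eps$ and the intertwining rescaling $\xi' = \eps \eta$ which convert the semi-classical composition on $\R^d$ into the exact Weyl (Moyal) composition on $L^2(N_\sigma X)$; verifying this correspondence at the required leading order in $\eps$, uniformly on $\supp a$, is the technical heart of the argument and is the reason one needs $a$ smooth and compactly supported in all $3d$ variables.
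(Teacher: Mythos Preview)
Your proposal is correct and follows essentially the same route as the paper's proof: reduction to a flat local model via the isometry $\mathcal{U}_\eps$ of \cite[Lemma~4.3]{CFM2}, truncation of $g$ in the $\eta$-variable to make it a bounded Fourier multiplier, and identification of the limit through the operator-valued semiclassical picture on the tangential variables that underlies the definition of $M\,d\nu$.

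The only notable difference is in the truncation step. The paper uses a scale-dependent cutoff $\chi^R(\eta)=\chi(\eta/R)$ and argues that $\op_\eps(a_\eps)(g(1-\chi^R))_\eps(\eps D)=O(R^{-1})$, passing to the limit $\eps\to 0$ first and then $R\to\infty$; your version uses a fixed cutoff with $\chi\equiv 1$ on the $\eta$-support of $a$ and claims $O(\eps^\infty)$ directly. Your claim is correct in spirit, but be careful with the justification: before the normal rescaling the $\xi'$-supports of $a_\eps$ and $(g(1-\chi))_\eps$ are only separated by a distance of order~$\eps$, so a naive disjoint-support/non-stationary-phase argument at the semiclassical level does not give $O(\eps^\infty)$. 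It is only after conjugation by $\mathcal{U}_\eps$ (equivalently, after the rescaling $\xi'=\eps\eta'$ you mention) that the separation becomes of order one and the normal quantization becomes non-semiclassical Weyl, at which point the argument goes through. The paper's two-parameter scheme sidesteps this subtlety at the price of an extra limit; your single-cutoff version is cleaner once the rescaling is in place. Either way the technical heart, which you correctly identify, is the passage from the semiclassical composition on $\R^d$ to the fiberwise Weyl composition $Q_a^X(\sigma,v)Q_g^X(\sigma)$ on $L^2(N_\sigma X)$, and this is exactly how the paper proceeds.
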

 
 We recall that the notations of this section have been introduced in Section~\ref{sec:twomic}.
 Of course, this lemma has standard generalizations to  vector-valued families and to  time dependent families which are bounded in $L^\infty(\R, L^2(\R^d,\C^N)$.
 
 \begin{proof}
 Here again, we work in local coordinates $\varphi(\xi)=0$ of $X$ and we consider the $d\times p$  smooth matrix $B(\xi)$ such that 
$$\xi-\sigma_X(\xi)= B(\xi) \varphi(\xi)$$
where $\varphi (\xi)\in \C^{p\times 1}$ is a column.
 We associate with $\varphi$ the diffeomorphism
$$ \Phi: (\,^t\varphi(\xi),\xi'')\mapsto \xi$$
and, we consider the isometry ${\mathcal U}_\eps$ of $L^2(\R^d)$ given by Lemma~4.3 in~\cite{CFM2} such that 
for all $b\in{\mathcal A}^{(2)}$ and $f\in L^2(\R^d)$
$$({\rm op}_\eps(b_\eps) f,f)=
\left({\rm op}_\eps \left(b\left(
\, ^t d\Phi(\xi)^{-1}x, \Phi(\xi), B(\Phi(\xi) )\frac{\xi'}\eps 
\right)\right){\mathcal U}_\eps f,{\mathcal U}_\eps f\right)+O(\eps).$$
 We then concentrate on  the operator
 $${\rm op}_\eps \left(a\left(
\, ^t d\Phi(\xi)^{-1}x, \Phi(\xi), B(\Phi(\xi) )\frac{\xi'}\eps 
\right)\right) {\rm op}_\eps \left(g\left(\Phi(\xi), B(\Phi(\xi) )\frac{\xi'}\eps \right)\right) .$$
Since $a$ is compactly supported in all variables, we obtain  in $\mathcal L(L^2(\R^d))$
\begin{align*}
&{\rm op}_\eps \left(a\left(
\, ^t d\Phi(\xi)^{-1}x, \Phi(\xi), B(\Phi(\xi) )\frac{\xi'}\eps 
\right)\right)
\\
&\qquad
=
{\rm op}_\eps \left(a\left(
\, ^t d\Phi(0,\xi'')^{-1}x, \Phi(0,\xi''), B(\Phi(0,\xi'') )\frac{\xi'}\eps 
\right)\right) + O(\eps)\\
&\qquad=
{\rm op}_\eps \left(a^W\left(
\, ^t d\Phi(0,\xi'')^{-1}x, \Phi(0,\xi''), B(\Phi(0,\xi'') )D_{x'}
\right)\right) + O(\eps).
\end{align*}
One can then interpret this operator as an operator acting on $L^2(\R^d_{x''}, L^2(\R^d_{x'}))$ where, as explained in Section~4.1 of~\cite{CFM2}, for any $(\sigma,v)=((0,\xi''), (0,x''))\in TX^*$, the map 
$$(z,\zeta)\mapsto a\left(
\, ^t d\Phi(0,\xi'')^{-1}(z,x''), \Phi(0,\xi''), B(\Phi(0,\xi'') )\zeta
\right)$$
defines a function on $T^*(N_\sigma X)$, which implies that the operator 
$$a^W\left(
\, ^t d\Phi(0,\xi'')^{-1}x, \Phi(0,\xi''), B(\Phi(0,\xi'') )D_{x'}
\right)$$
acts on $L^2(N_\sigma X)$. It is the expression of the operator $Q_a(\sigma, v)$ in the local coordinates induced by the choice of equations $\varphi(\xi)=0$ of $X$. 

\smallskip 

The difficulty with $g_\eps(\eps D)$ is that the map $(\sigma,\eta) \mapsto g(\sigma,\eta)$ is not bounded in $\eta$. Therefore, we decompose $g(\sigma,\eta)$ into two parts thanks to a function $\chi\in{\mathcal C}_0^\infty(\R^d)$ such that $\chi=1$ on $B(0,1)$ and $\chi=0$ on $B(0,2)^c$ with $0\leq \chi\leq 1$. Writing like before $\chi^R(\eta)=\chi(\eta/R)$ for $R>0$, we set  
$$g=g\chi^R + g(1-\chi^R)$$
 and we first focus on ${\rm op}_\eps(a_\eps) (g(1-\chi^R))_\eps(\eps D)$. Since now $g(1-\chi_R)$ is smooth, we can use standard symbolic calculus, and we have 
$${\rm op}_\eps(a_\eps) (g(1-\chi^R))_\eps(\eps D)= {\rm op}_\eps((ag(1-\chi^R))_\eps) + O(R^{-1})$$
because $\nabla_\xi( (g(1-\chi^R))_\eps)= O(R^{-1})$. Moreover, as soon as $R$ is large enough, we have $ag(1-\chi^R)=0$. 
We conclude
$$g_\eps(\eps D)= (g\chi^R)_\eps(\eps D) +O(R^{-1})$$
 and 
$${\rm op}_\eps \left(g\left(\Phi(\xi), B(\Phi(\xi) )\frac{\xi'}\eps \right)\right)=
{\rm op}_\eps \left((g\chi^R)(\Phi(0,\xi''), B(\Phi(0,\xi'') ) D_{x'})\right) +O_R(\eps) +O(R^{-1})$$
in $\mathcal L(L^2(\R^d))$.
Note that in the following we will let first $\eps$ go to $0$, and then $R$ to $+\infty$, so that $O_R(\eps)$ is negligible. Besides, when $R$ goes to $+\infty$,  we are left with the operator $g(\Phi(0,\xi''), B(\Phi(0,\xi'') D_{x'})$ (with strong convergence), which is the expression in local coordinates of  the operator $Q_g(\sigma)$. 

\smallskip 

At this stage of the proof, we are left with the quantity 
$$\left( {\rm op}_\eps \left(a^W\left(
\, ^t d\Phi(0,\xi'')^{-1}x, \Phi(0,\xi''), B(\Phi(0,\xi'') )D_{x'}
\right)\right) {\rm op}_\eps (g(\Phi(0,\xi''), B(\Phi(0,\xi'') ) D_{x'})) 
{\mathcal U}_\eps f,{\mathcal U}_\eps f\right).$$
 It turns out that the pair $M d\nu$ has been defined in~\cite{CFM2} (see Proposition~4.2) as a semi-classical measure of the family $({\mathcal U}_\eps f)$, which is a bounded family in $L^2(\R^{d-p}_{x''}, L^2(\R^p_{x'}))$. Therefore, in coordinates
\begin{align*}
&\left( {\rm op}_\eps \left(a^W\left(
\, ^t d\Phi(0,\xi'')^{-1}x, \Phi(0,\xi''), B(\Phi(0,\xi'') )D_{x'}
\right)\right) {\rm op}_\eps (g(\Phi(0,\xi''), B(\Phi(0,\xi'') ) D_{x'})) 
{\mathcal U}_\eps f,{\mathcal U}_\eps f\right)
\\
&\Tend{\eps}{0} 
\int_{ \R^{2(d-p)}} {\rm Tr}_{L^2(\R^p)} \left(  {\rm op}_\eps \left(a^W\left(
\, ^t d\Phi(0,\xi'')^{-1}x, \Phi(0,\xi''), B(\Phi(0,\xi'') )D_{x'}
\right)\right) M(\xi'',x'') \right)d\nu(\xi'',x'')\\
&\qquad\qquad= \int_{TX^*} {\rm Tr} _{L^2(N_\sigma X)}(Q^X_a(v,\sigma) Q^X_g(\sigma)  M(\sigma,v) )d\nu (\sigma,v).
\end{align*}
 \end{proof}

 \section{Well-prepared data}\label{app:wpdata}

 We prove here properties of well-prepared initial~\eqref{def:wpdata}.
  We shall use the Fourier coefficients of the Bloch waves
 $\varphi_n(y,\xi)$ for $n\in\N^*$. Recall that they  satisfy the Bloch periodicity condition:
\begin{equation}\label{eq:bp}
\varphi_n(y,\xi+j) = {\rm e}^{-i j\cdot y} \varphi_n(y,\xi),\;\;\forall j\in 2\pi\Z^d.
\end{equation}
Integrating \eqref{eq:bp} with respect to $y$ on $\T^d$ gives an expression for the Fourier coefficients of $\varphi_n(\cdot,\xi)$: for all $y\in\T^d$ and $\xi\in \R^d$
\begin{equation}\label{eq:fsphi}
\varphi_n(y,\xi)=\sum_{j\in 2\pi \Z^d} c_n(\xi+ j){\rm e}^{i j\cdot y},\;\; c_n(\xi)=\int_{\T^d} \varphi_n(y,\xi) dy.
\end{equation}
Besides, for $s>0$, there exists $C_{n,s}>0$ such that 
\begin{equation}\label{peetre}
\sum_{j\in 2\pi\Z^d} \langle j\rangle ^{2s} |c_n(\xi+j)|^2 \leq C_{n,s} \langle \xi\rangle ^{2s}
\end{equation}
It turns out that these class of data are closely related to those studied in~\cite{AP05,AP06}. 

 \begin{lemma}\label{lem:APdata}
 Let $( \psi^\eps_{0,n})_{\eps>0}$  as in~\eqref{def:wpdata} with 
 $u^\eps_n= {\rm e}^{\frac i\eps \xi_n\cdot x}v^\eps_n(x)$, $v_n^\eps$ uniformly bounded in $H^s(\R^d)$, $s>d/2$ (or $s>1$ when $d=1$).
 Assume that $\xi\mapsto \varphi_n(\cdot, \xi)$ is Lipschitz.
 Then, we have in $L^2(\R^d)$
 \[
  \psi^\eps_{0,n}= {\rm e}^{\frac i \eps x\cdot \xi_n}\varphi_n\left(\frac x\eps,\xi_n\right) v^\eps _n+O(\eps).
 \]
 Besides, if $v^\eps_n \rightarrow v_n$ in $L^2$, then $\| \psi^\eps_{0,n}\|_{L^2}= \| v_n\|_{L^2}+o(1)$ as $\eps$ goes to $0$.
 \end{lemma}

 \begin{proof}
 We set $\widetilde \psi^\eps_{0,n}= {\rm e}^{\frac i \eps x\cdot \xi_n}\varphi_n\left(\frac x\eps,\xi_n\right) v^\eps _n$.
We write for $x\in\R^d$
\[ \psi^\eps_{0,n}(x)=\sum_{j\in 2\pi\Z^d} {\rm e}^{\frac i\eps j\cdot x}c_n(j+\eps D_x) u^\eps_n(x).
\]
Let $\theta\in L^2(\R^d)$, we write 
$(\theta,\psi^\eps_{0,n})_{L^2(\R^d)} = \sum_{j\in 2\pi\Z^d} \alpha^\eps_j$ with 
\[
\alpha^\eps_j= \left(\theta ,{\rm e}^{\frac i\eps j\cdot x}c_n(j+\eps D_x) u^\eps_n \right)_{L^2}
=\left(\theta ,{\rm e}^{\frac i\eps (j+\xi_n)\cdot x}c_n(j+\xi_n+\eps D_x) v^\eps_n \right)_{L^2}.
\]
By~\eqref{peetre}, there exists $C>0$ such that 
\[ |\alpha^\eps_j| \leq C \langle j\rangle ^{-s} \| \langle \eps D_x\rangle^s u^\eps_n\|_{L^2}\| \theta\|_{L^2}.
\]
Therefore, the sum enters into the frame of Lebesgue dominated convergence. The same holds for the series 
\[
(\theta,\widetilde \psi^\eps_{0,n})_{L^2(\R^d)} = \sum_{j\in 2\pi\Z^d} \widetilde \alpha^\eps_j,\;\;
\widetilde \alpha^\eps_j=  \left(\theta ,{\rm e}^{\frac i\eps (j+\xi_n)\cdot x}c_n(j+\xi_n) v^\eps_n \right)_{L^2}.
\]
Besides 
\[ \alpha^\eps_j- \widetilde \alpha ^\eps_j= 
\left( \theta , {\rm e}^{\frac i\eps (j+\xi_n)\cdot x}(c_n(j+\xi_n+\eps D_x)-c_n(j+\xi_n)) v^\eps_n \right)_{L^2}.
\]
The conclusion then comes from the observation that $\xi\mapsto c_n(\xi)$ is uniformly Lipschitz and thus, one has
\[ 
\left| \alpha^\eps_j- \widetilde \alpha ^\eps_j\right| \leq \| \theta\|_{L^2} \| (c_n(j+\xi_n+\eps D_x)-c_n(j+\xi_n)) v^\eps_n\|_{L^2} \leq C\eps \| \theta\|_{L^2} \| D_x v^\eps_n\|_{L^2}
\]
which gives the result by 
 the boundedness of the family~$(v^\eps_n)_{\eps>0}$ in $H^s(\R^d)$ with~$s\geq 1$.
\smallskip

Let us now compute the norm of $\psi^\eps_{0,n}$. 
One has $ \|\psi^\eps_{0,n}\|_{L^2}^2=\sum _{j,j'\in 2\pi\Z^d} \beta^\eps_{j,j'}$   with 
\begin{align*}
\beta^\eps_{j,j'}   &=  \left({\rm e}^{\frac i\eps \cdot x (j-j')}
     c_n(j+\xi_n+\eps D_x) v^\eps_n(x) ,c_n(j'+\xi_n+\eps D_x ) v^\eps_n(x)\right)_{L^2}.
\end{align*}
Besides, by~\eqref{peetre}, there exists $C>0$ such that 
\[
|\beta^\eps_{j,j'}|\leq C \langle j\rangle^{-s}\langle j'\rangle ^{-s} \|\langle \eps D_x\rangle^s v^\eps_n\|^2_{L^2}.
\]
Therefore, the sum enters into the frame of Lebesgue dominated convergence. If $|j-j'|\geq 1$ with $j_\ell-j'_\ell\not=0$, $0\leq \ell\leq d$, an integration by parts give 
\[ \beta^\eps_{j,j'}   =i \eps (j_\ell-j'_\ell)^{-1}   \int_{\R^d} {\rm e}^{\frac i\eps \cdot x (j-j')} \partial_{x_\ell}
   \left(  c_n(j+\xi_n+\eps D_x) v^\eps_n(x) \overline{c_n(j'+\xi_n+\eps D_x ) v^\eps_n(x)}\right)dx,
     \]
     whence $\beta^\eps_{j,j'}\rightarrow 0$ as $\eps \rightarrow 0$ since $(v^\eps_n)_{\eps>0}$ is uniformly bounded in $H^1(\R^d)$. Moreover 
     \[\sum_{j\in 2\pi\Z^d}\beta^\eps_{j,j}= \sum_{j\in 2\pi\Z^d}  (2\pi)^{-d}\int_{\R^d} |c_n(j+\xi_n+\eps \xi )|^2 |\widehat v^\eps_n (\xi) |^2 d\xi.
     \]
     Using $\sum_{j\in 2\pi\Z^d} |c_n(\xi )|^2=\| \varphi_n(\cdot,\xi)\|_{L^2(\T^d)}^2=1$, we deduce 
     \[\sum_{j\in 2\pi\Z^d}\beta^\eps_{j,j}= (2\pi)^{-d}\int_{\R^d}|v^\eps_n (\xi) |^2 d\xi=\| v^\eps_n\|^2_{L^2}\Tend{\eps}{0} \| v_n\|^2
\]
and the conclusion follows.
 \end{proof}
 
 Let us now examine weak limits of such families.
 
 \begin{lemma}\label{lem:data_toto}
 Let $( \psi^\eps_{0,n})_{\eps>0}$  as in~\eqref{def:wpdata} with 
 $u^\eps_n= {\rm e}^{\frac i\eps \xi_n\cdot x}v_n(x)$, $(v_n^\eps)_{\eps>0}$  bounded in $H^s(\R^d)$ with $s>d/2$ (or $s>1$ when $d=1$). Assume one has  as $\eps$ goes to $0$
 \[
 v_n^\eps \rightharpoonup v_n  \;\; \mbox{in}\;\; L^2.
 \]
 Then, the weak limits of $\left({\rm e}^{-\frac i\eps x\cdot \xi}\psi^\eps_{0,n}\right)_{\eps>0}$
 are non zero if and only if $\xi \in \xi_{n}+2 \pi \Z^d$. Besides, for any~$j\in 2\pi\Z^d$, one has as $\eps$ goes to $0$
 \[
 {\rm e}^{-\frac i\eps x\cdot (\xi_n+j)}\psi^\eps_{0,n}
 \rightharpoonup 
 c_n(\xi_n+j) v_{n} \;\; \mbox{in}\;\; L^2.
 \]
  \end{lemma}
  
 \begin{proof}
One writes 
\begin{align*}
    {\rm e}^{-\frac i\eps x\cdot \xi}\psi^\eps_{0,n}& = 
   {\rm e}^{-\frac i\eps x\cdot \xi}\sum_{j\in 2\pi\Z^d}  {\rm e}^{-\frac i\eps x\cdot j} c_n(\eps D_x+j)   {\rm e}^{\frac i\eps x\cdot \xi_n} v^\eps_n
   = \sum_{j\in 2\pi\Z^d}  {\rm e}^{-\frac i\eps x\cdot (j+\xi-\xi_n)} c_n(\xi_n+j +\eps D_x)  v^\eps_n
\end{align*}
and the result follows. 
 \end{proof}
 
 As a corollary, we obtain a description of the concentration of these families on subsets of interest. 
 
 \begin{corollary}\label{cor:titi}
 Let $X$ be a $2\pi\Z^d$ periodic subset~$X$ of $\R^d$ consisting of isolated points and containing~$\xi_n$.
 \begin{enumerate}
     \item  Let $M^Xd\nu^X$ be the two-microlocal measures at finite distance associated with the concentration of $(\psi^\eps_{0,n})_{\eps>0}$  on $X$. 
 Then,
 $$\nu^X=\| v_n\|^2_{L^2}\sum_{j\in 2\pi\Z^d}|c_n(\xi_n+j)|^2  \delta(\xi-\xi_n-j)$$ 
 and, for $j\in 2\pi\Z^d$, the operator~$M^X(\xi_{n} +j)$ is the projector of $L^2(\R^d)$ on $\C v_{n}$.\\
 \item Consider the family of $L^2(\R^d,\C^2)$, 
 $\Psi_{n,n'}^\eps=\, ^t\left( \psi^\eps_{0,n},\psi^\eps_{0,n'}\right)$ with $n\not=n'$ and $\xi_{n'}=\xi_n$. 
Let $M^Xd\nu^X$ be the two-microlocal measures at finite distance associated with the concentration of $\Psi_{n,n'} ^\eps$ 
 on~$X$. Then
 $$\nu^X=\left(\| v_n\|^2_{L^2} + \| v_{n+1}\|^2_{L^2}\right)
 \sum_{j\in 2\pi\Z^d}|c_n(\xi_n+j)|^2  \delta(\xi-\xi_n-j)$$ 
 and, for $j\in 2\pi\Z^d$, the operator~$M^X(\xi_{n} +j)$ is the projector of $L^2(\R^d,\C^2)$ on $\C \,^t(v_{n},v_{n+1})$.
  \end{enumerate}
 \end{corollary}
 
 \begin{proof}
 Since $\xi_n$ is an isolated point of $X$,  by Remark~\ref{rem:M0}, $M^X$ is the orthogonal projector on  a weak limit in $L^2$ of 
 \[
 x\mapsto {\rm e}^{-\frac i\eps x\cdot \xi}\varphi_{n}\left(\frac x\eps, \eps D_x\right) u^\eps_{n}.
 \]
 By Lemma~\ref{lem:data_toto}, any of this weak limit is $0$ if $\xi\notin\xi_{n}+2\pi\Z^d$ and if $\xi=\xi_{n}+j$ with $j\in 2\pi\Z^d$, then  the limit is  $c_{n_0}(\xi_{n_0}+j)v_{n_0}$. Whence the Part~1 of the result. The proof of Part~2 follows the same lines.
 \end{proof}


\def\cprime{$'$}


\begin{thebibliography}{10}

\bibitem{AP06}
Gr\'egoire~Allaire and Mariapia~Palombaro.
\newblock Localization for the {S}chr\"odinger equation in a locally periodic
  medium.
\newblock {\em SIAM J. Math. Anal.}, 38(1):127--142 (electronic), 2006.

\bibitem{AP05}
Gr\'egoire~Allaire and Andrey~Piatnitski.
\newblock Homogenization of the {S}chr\"odinger equation and effective mass
  theorems.
\newblock {\em Comm. Math. Phys.}, 258(1):1--22, 2005.

\bibitem{AFM15}
Nalini~Anantharaman, Clotilde~Fermanian-Kammerer, and Fabricio~Maci{\`a}.
\newblock Semiclassical completely integrable systems: long-time dynamics and
  observability via two-microlocal {W}igner measures.
\newblock {\em Amer. J. Math.}, 137(3):577--638, 2015.

\bibitem{ALM16}
Nalini~Anantharaman, Mathieu~L{\'e}autaud, and Fabricio~Maci{\`a}.
\newblock Winger measures and observability for the {S}chr\"odinger equation on
  the disk.
\newblock {\em Invent. Math.}, 206(2):485--599, 2016.

\bibitem{AM12}
Nalini~Anantharaman and Fabricio~Maci{\`a}.
\newblock The dynamics of the {S}chr\"odinger flow from the point of view of
  semiclassical measures.
\newblock In {\em Spectral geometry}, volume~84 of {\em Proc. Sympos. Pure
  Math.}, pages 93--116. Amer. Math. Soc., Providence, RI, 2012.

\bibitem{AM14}
Nalini~Anantharaman and Fabricio~Maci{\`a}.
\newblock Semiclassical measures for the {S}chr\"odinger equation on the torus.
\newblock {\em J. Eur. Math. Soc. (JEMS)}, 16(6):1253--1288, 2014.

\bibitem{BBA11}
Luigi~Barletti and Naoufel~Ben~Abdallah.
\newblock Quantum transport in crystals: effective mass theorem and k{$\cdot$}p
  {H}amiltonians.
\newblock {\em Comm. Math. Phys.}, 307(3):567--607, 2011.

\bibitem{BMP01}
Philippe~Bechouche, Norbert~J. Mauser, and Fr\'ed\'eric~Poupaud.
\newblock Semiclassical limit for the {S}chr\"odinger-{P}oisson equation in a
  crystal.
\newblock {\em Comm. Pure Appl. Math.}, 54(7):851--890, 2001.

\bibitem{BLP78}
Alain~Bensoussan, Jacques-Louis Lions, and George~Papanicolaou.
\newblock {\em Asymptotic analysis for periodic structures}, volume~5 of {\em
  Studies in Mathematics and its Applications}.
\newblock North-Holland Publishing Co., Amsterdam-New York, 1978.

\bibitem{Borg}
G\"oran~Borg.
\newblock Eine {U}mkehrung der {S}turm-{L}iouvilleschen {E}igenwertaufgabe.
  {B}estimmung der {D}ifferentialgleichung durch die {E}igenwerte.
\newblock {\em Acta Math.}, 78:1--96, 1946.

\bibitem{Boulk99}
Abdesslam~Boulkhemair.
\newblock {$L^2$} estimates for {W}eyl quantization.
\newblock {\em J. Funct. Anal.}, 165(1):173--204, 1999.

\bibitem{CV}
Alberto-P. Calder{\'o}n and R\'emi~Vaillancourt.
\newblock On the boundedness of pseudo-differential operators.
\newblock {\em J. Math. Soc. Japan}, 23:374--378, 1971.

\bibitem{CS12}
R\'emi~Carles and Christof~Sparber.
\newblock Semiclassical wave packet dynamics in {S}chr\"odinger equations with
  periodic potentials.
\newblock {\em Discrete Contin. Dyn. Syst. Ser. B}, 17(3):759--774, 2012.

\bibitem{CFM1}
Victor~Chabu, Clotilde~Fermanian-Kammerer, and Fabricio~Maci\`a.
\newblock Semiclassical analysis of dispersion phenomena.
\newblock In {\em Analysis and partial differential equations: perspectives
  from developing countries}, volume 275 of {\em Springer Proc. Math. Stat.},
  pages 84--108. Springer, Cham, 2019.

\bibitem{CFM2}
Victor~Chabu, Clotilde~Fermanian-Kammerer, and Fabricio~Maci\`a.
\newblock Wigner measures and effective mass theorems.
\newblock {\em Annales Henri Lebesgue}, 2020.
\newblock To appear.

\bibitem{CdV1}
Yves Colin de Verdi\`ere. The level crossing problem in
semi-classical analysis I. The symmetric case, proceedings of the international conference in honor of 
Fr\'ed\'eric Pham (Nice, 2002), 
{\it Ann. Inst. Fourier (Grenoble)} {53}(4):1023--1054 (2003).

\bibitem{CdV2}
{Yves Colin de Verdi\`{e}re}.
The level crossing problem in semi-classical analysis II,
The hermitian case,
Ann.\  Inst.\  Fourier, 54, no.\ 5 (2004), 
pp.\ 1423--1441.

\bibitem{Dimassi1999}
Mou\"ez~Dimassi and Johanes~Sj\"{o}strand.
\newblock {\em Spectral asymptotics in the semi-classical limit}, volume 268 of
  {\em London Mathematical Society Lecture Notes Series}.
\newblock Cambridge University Press, Cambridge, 1999.

\bibitem{Drouot}
Alexis Drouot.
\newblock{The bulk-edge correspondence for continuous honeycomb lattices},
\newblock{\em Communications in PDEs}, 44(12):  1406–1430, 2019.  

\bibitem{DrouotWeinstein} 
Alexis Drouot and Michael Weinstein.
\newblock{Edge states and the Valley Hall Effect}.
\newblock{\em Advances in Mathematics}, 368: 107--142, 2020.


\bibitem{FermanianShocks}
Clotilde Fermanian~Kammerer.
\newblock Propagation and absorption of concentration effects near shock
  hypersurfaces for the heat equation.
\newblock {\em Asymptot. Anal.}, 24(2):107--141, 2000.


\bibitem{Fermanian_note1}
Clotilde~Fermanian-Kammerer.
\newblock Mesures semi-classiques 2-microlocales.
\newblock {\em C. R. Acad. Sci. Paris S\'{e}r. I Math.}, 331(7):515--518, 2000.



\bibitem{F1} Clotilde Fermanian  Kammerer. Semiclassical analysis of
generic codimension 3 crossings. {\it Int. Math. Res. Not.} 45:2391--2435 (2004).

\bibitem{Fermanian_Note2}
Clotilde~Fermanian~Kammerer.
\newblock Analyse \`a deux \'{e}chelles d'une suite born\'{e}e de {$L^2$} sur
  une sous-vari\'{e}t\'{e} du cotangent.
\newblock {\em C. R. Math. Acad. Sci. Paris}, 340(4):269--274, 2005.

\bibitem{F14}
Clotilde~Fermanian-Kammerer.
\newblock Op\'erateurs pseudo-diff\'erentiels semi-classiques.
\newblock In {\em Chaos en m\'ecanique quantique}, pages 53--100. Ed. \'Ec.
  Polytech., Palaiseau, 2014.
  
  \bibitem{FG02}
Clotilde Fermanian Kammerer and  Patrick G\'erard. 
{Mesures semi-classiques et croisements de modes},
Bull.\ Soc.\ Math.\ Fr., 130, no.\ 1 (2002), pp.\ 123--168.


\bibitem{FG03} 
Clotilde Fermanian Kammerer and  Patrick G\'erard. A Landau-Zener formula for
non-degenerated involutive codimension 3 crossings., {\it Ann.
Henri Poincar\'e}, {4}:513--552 (2003).

\bibitem{FGL}
Clotilde.~Fermanian-Kammerer, Patrick~G\'{e}rard, and Caroline~Lasser.
\newblock Wigner measure propagation and conical singularity for general
  initial data.
\newblock {\em Arch. Ration. Mech. Anal.}, 209(1):209--236, 2013.

\bibitem{FL08} Clotilde Fermanian Kammerer and  Caroline Lasser. Propagation through generic level crossings: a surface hopping semigroup.  {\it SIAM J. of Math. Anal. },  {140}, 1, p. 103-133 (2008).


\bibitem{FL17} Clotilde Fermanian Kammerer and Caroline Lasser. An Egorov theorem for avoided crossings of eigenvalue surfaces. {\it Comm. Math. Phys.} 353(3), 1011--1057 (2017).

\bibitem{FM} Clotilde Fermanian Kammerer and Florian M\'ehats. A kinetic model for the transport of electrons in a graphene layer,
{\it Journal of Computational Physics}, 327 (2016)

\bibitem{FK}
Nicolas~Filonov and Ilya~Kachkovskiy.
\newblock On the structure of band edges of 2-dimensional periodic elliptic
  operators.
\newblock {\em Acta Math.}, 221(1):59--80, 2018.

\bibitem{Gar53}
Lars~G\aa rding.
\newblock Dirichlet's problem for linear elliptic partial differential
  equations.
\newblock {\em Math. Scand.}, 1:55--72, 1953.

\bibitem{Ge91}
Patrick~G{\'e}rard.
\newblock Mesures semi-classiques et ondes de {B}loch.
\newblock In {\em S\'eminaire sur les \'{E}quations aux {D}\'eriv\'ees
  {P}artielles, 1990--1991}, pages Exp.\ No.\ XVI, 19. \'Ecole Polytech.,
  Palaiseau, 1991.

\bibitem{GerLeich93}
Patrick~G{\'e}rard and {\'E}ric~Leichtnam.
\newblock Ergodic properties of eigenfunctions for the {D}irichlet problem.
\newblock {\em Duke Math. J.}, 71(2):559--607, 1993.

\bibitem{Hag94} George A. Hagedorn. Molecular Propagation through Electron Energy Level
Crossings. {\it Memoirs of the A. M. S.}, {111}, 536 (1994).

\bibitem{HW11}
Mark~A. Hoefer and Michael~I. Weinstein.
\newblock Defect modes and homogenization of periodic {S}chr\"odinger
  operators.
\newblock {\em SIAM J. Math. Anal.}, 43(2):971--996, 2011.

\bibitem{HST:01}
Frank~H{\"o}vermann, Herbert~Spohn, and Stefan~Teufel.
\newblock Semiclassical limit for the {S}chr\"odinger equation with a short
  scale periodic potential.
\newblock {\em Comm. Math. Phys.}, 215(3):609--629, 2001.

\bibitem{Kuch16}
Peter~Kuchment.
\newblock An overview of periodic elliptic operators.
\newblock {\em Bull. Amer. Math. Soc. (N.S.)}, 53(3):343--414, 2016.

\bibitem{LionsPaul}
Pierre-Louis Lions and Thierry Paul.
\newblock Sur les mesures de {W}igner.
\newblock {\em Rev. Mat. Iberoamericana}, 9(3):553--618, 1993.

\bibitem{MaciaAv}
Fabricio~Maci{\`a}.
\newblock Semiclassical measures and the {S}chr\"odinger flow on {R}iemannian
  manifolds.
\newblock {\em Nonlinearity}, 22(5):1003--1020, 2009.

\bibitem{MaciaTorus}
Fabricio~Maci\`{a}.
\newblock High-frequency propagation for the {S}chr\"odinger equation on the
  torus.
\newblock {\em J. Funct. Anal.}, 258(3):933--955, 2010.

\bibitem{MaciaDispersion}
Fabricio~Maci\`{a}.
\newblock The {S}chr\"odinger flow on a compact manifold: High-frequency
  dynamics and dispersion.
\newblock In {\em Modern Aspects of the Theory of Partial Differential
  Equations}, volume 216 of {\em Oper. Theory Adv. Appl.}, pages 275--289.
  Springer, Basel, 2011.

\bibitem{MaciaLille}
Fabricio~Maci{\`a}.
\newblock High-frequency dynamics for the {S}chr\"odinger equation, with
  applications to dispersion and observability.
\newblock In {\em Nonlinear optical and atomic systems}, volume 2146 of {\em
  Lecture Notes in Math.}, pages 275--335. Springer, Cham, 2015.

\bibitem{MacRiv18}
Fabricio~Maci{\`{a}} and Gabriel~Rivi{\`{e}}re.
\newblock Two-microlocal regularity of quasimodes on the torus.
\newblock {\em Analysis {\&} {PDE}}, 11(8):2111--2136, jun 2018.

\bibitem{magnus_winkler}
Wilhelm Magnus and Stanley Winkler.
\newblock {\em Hill's equation}.
\newblock Interscience Publishers [John Wiley and Sons], New
York, 1966.

\bibitem{McKeanTrubowitz}
Henry~P. McKean and Eugene~Trubowitz.
\newblock Hill's operator and hyperelliptic function theory in the presence of
  infinitely many branch points.
\newblock {\em Comm. Pure Appl. Math.}, 29(2):143--226, 1976.

\bibitem{McKean}
Henry~P. McKean and Pierre~van Moerbeke.
\newblock The spectrum of {H}ill's equation.
\newblock {\em Invent. Math.}, 30(3):217--274, 1975.

\bibitem{MillerThesis}
Luc~Miller.
\newblock {\em Propagation d'ondes semi-classiques \`{a} travers une interface
  et mesures 2-microlocales}.
\newblock PhD thesis, \'Ecole Polytechnique, Palaiseau, 1996.

\bibitem{NierScat}
Francis~Nier.
\newblock A semi-classical picture of quantum scattering.
\newblock {\em Ann. Sci. \'Ecole Norm. Sup. (4)}, 29(2):149--183, 1996.

\bibitem{PST:03}
Gianluca~Panati, Herbert~Spohn, and Stefan~Teufel.
\newblock Effective dynamics for {B}loch electrons: {P}eierls substitution and
  beyond.
\newblock {\em Comm. Math. Phys.}, 242(3):547--578, 2003.

\bibitem{PR96}
Fr\'ed\'eric~Poupaud and Christian~Ringhofer.
\newblock Semi-classical limits in a crystal with exterior potentials and
  effective mass theorems.
\newblock {\em Comm. Partial Differential Equations}, 21(11-12):1897--1918,
  1996.

\bibitem{RS}
Mickael~Reed and Barry~Simon.
\newblock {\em Methods of modern mathematical physics. {IV}. {A}nalysis of
  operators}.
\newblock Academic Press [Harcourt Brace Jovanovich, Publishers], New
  York-London, 1978.

\bibitem{Spar06}
Christof~Sparber.
\newblock Effective mass theorems for nonlinear {S}chr\"odinger equations.
\newblock {\em SIAM J. Appl. Math.}, 66(3):820--842 (electronic), 2006.

\bibitem{Watson}
Alexander~Watson and Michael~I. Weinstein.
\newblock Wavepackets in inhomogeneous periodic media: propagation through a
  one-dimensional band crossing.
\newblock {\em Communications in Mathematical Physics}, 363(2):655--698, 2018.

\bibitem{Wilcox78}
Calvin~H. Wilcox.
\newblock Theory of {B}loch waves.
\newblock {\em J. Analyse Math.}, 33:146--167, 1978.

\bibitem{Zwobook}
Maciej~Zworski.
\newblock {\em Semiclassical analysis}, volume 138 of {\em Graduate Studies in
  Mathematics}.
\newblock American Mathematical Society, Providence, RI, 2012.

\end{thebibliography}

\end{document}